\theoremstyle{plain}
\newtheorem{thm}{Theorem}[section]
\newtheorem{prop}[thm]{Proposition}
\newtheorem{lem}[thm]{Lemma}
\newtheorem{lemma}[thm]{Lemma}
\newtheorem{cor}[thm]{Corollary}
\theoremstyle{definition}
\newtheorem{defn}[thm]{Definition}
\newtheorem{notation}[thm]{Notation}
\theoremstyle{remark}
\newtheorem{remark}[thm]{Remark}
\numberwithin{equation}{section}
\newcommand{\bC}{{\mathbb C}}
\newcommand{\bE}{{\mathbb E}}
\newcommand{\bN}{{\mathbb N}}
\newcommand{\bR}{{\mathbb R}}
\newcommand{\bZ}{{\mathbb Z}}
\newcommand{\cA}{{\mathcal A}}
\newcommand{\cC}{{\mathcal C}}
\newcommand{\cE}{{\mathcal E}}
\newcommand{\cF}{{\mathcal F}}
\newcommand{\cG}{{\mathcal G}}
\newcommand{\cN}{{\mathcal N}}
\newcommand{\cP}{{\mathcal P}}
\newcommand{\cS}{{\mathcal S}}
\newcommand{\cV}{{\mathcal V}}
\newcommand\id{\operatorname{id}}
\newcommand\ind{\operatorname{Ind}}
\DeclareMathOperator{\tr}{tr}
\DeclareMathOperator{\Tr}{Tr}
\DeclareMathOperator{\Comp}{Comp}
\newcommand{\norm}[1]{\left\|#1\right\|}
\newcommand{\set}[1]{\left\{#1\right\}}
\newcommand{\paren}[1]{\left(#1\right)}
\newcommand{\sq}[1]{\left[#1\right]}
\newcommand{\omegapic}{\pi_c}
\newcommand{\Tompic}{T/\omegapic}
\newcommand{\Tpic}{T_{\pi,c}}
\newcommand{\uu}[1]{{\underline{\underline{#1}}}}
\newcommand{\GCC}{\mathcal{GCC}}
\newcommand{\gps}[1][s]{T_{\pi,#1}}
\newcommand{\hsc}[1][c]{h^\pi_{#1\to s}}
\newcommand{\tdg}{test digraph}
\newcommand{\dg}{digraph}
\definecolor{green}{RGB}{13,177,75} % 
\newcommand{\crr}{{\color{red}R}}
\newcommand{\cg}{{\color{green}G}}
\newcommand{\cb}{{\color{blue}B}}
\DeclareFontFamily{U}{mathb}{\hyphenchar\font45}
\DeclareFontShape{U}{mathb}{m}{n}{
      <5> <6> <7> <8> <9> <10> gen * mathb
      <10.95> mathb10 <12> <14.4> <17.28> <20.74> <24.88> mathb12
      }{} 
\DeclareSymbolFont{mathb}{U}{mathb}{m}{n}
\DeclareMathSymbol{\Asterisk}      {2}{mathb}{"06}
\newcommand{\gp}{\mathop{\begin{tikzpicture}[baseline]
\node[circle, fill=cyan, draw=black, inner sep=0pt, minimum size=3pt] (mid) at (0, 2.5pt) {};
\foreach \x in {0,60,...,300} {
\node[circle, fill=cyan, draw, inner sep=0pt, minimum size=3pt] (\x) at ($(mid)!6pt!(mid.\x)$) {};
\draw (\x) -- (mid) ;
}
% \foreach \x in {240,300,...,360} {
% \node[circle, fill=yellow, draw=black, inner sep=0pt, minimum size=3pt] (\x) at ($(mid)!6pt!(mid.\x)$) {};
% \draw (\x) -- (mid) ;
% }
\end{tikzpicture}}}
\newcommand{\sstrc}{\begin{tikzpicture}
    \node [inner sep=0, minimum size=0] at (-6pt,0) {};
    \draw (-5pt,0) -- (0,0) node[draw, shade, circle, ball color=black!60!white, inner sep=0pt, minimum size=4pt] {};
    \node at (0,1pt) {};
\end{tikzpicture}}
\title{Random permutation matrix models for graph products}
\author{Ian Charlesworth}
\address{Cardiff University, School of Mathematics, Abacws, Senghennydd Rd, Cardiff CF24 4AG, United Kingdom}
\email{charlesworthi@cardiff.ac.uk}
\urladdr{https://www.ilcharle.com/}
\author{Rolando de Santiago}
\address{Department of Mathematics and Statistics, California State University, Long Beach \hfill\url{rolando.desantiago@csulb.edu}}
\email{rolando.desantiago@csulb.edu}
\author{Ben Hayes}
\address{Department of Mathematics,
University of Virginia, 141 Cabell Drive, Kerchof Hall, Charlottesville, VA, 22904}
\email{brh5c@virginia.edu}
\urladdr{https://sites.google.com/site/benhayeshomepage/home}
\author{David Jekel}
\address{\parbox{\linewidth}{Department of Mathematics, York University,}}
\email{david.jekel@gmail.com}
\urladdr{http://davidjekel.com}
\author{Srivatsav Kunnawalkam Elayavalli}
\address{\parbox{\linewidth}{Department of Mathematics, University of California, \\
San Diego, 9500 Gilman Drive \# 0112, La Jolla, CA 92093}}
\email{skunnawalkamelayaval@ucsd.edu}
\urladdr{https://sites.google.com/view/srivatsavke}
\author{Brent Nelson}
\address{Department of Mathematics, Michigan State University, 619 Red Cedar Road, C212 Wells Hall, East Lansing, MI 48824}
\email{brent@math.msu.edu}
\urladdr{https://users.math.msu.edu/users/banelson/}
\begin{document}

\maketitle

\begin{abstract}
Graph independence (also known as $\epsilon$-independence or $\Lambda$-independence) is a mixture of classical independence and free independence corresponding to graph products of groups or operator algebras.  Using conjugation by certain random permutation matrices, we construct random matrix models for graph independence with amalgamation over the diagonal matrices.  This yields a new probabilistic proof that graph products of sofic groups are sofic.
\end{abstract}

\section{Introduction}

\subsection{Motivation}
Non-commutative probability theory views operators acting on a Hilbert space as analogous to random variables acting by multiplication on $L^2$.
In this setting there are five natural notions of independence: classical (or tensor) independence, free independence, (anti-)monotone independence, and boolean independence \cite{MR2016316, MR1895232}.
There are various generalizations of these notions; of particular interest is the setting for families of algebras where the independence relation is not necessarily assumed to be the same between any given pair \cite{BW1998,Mlot2004,Wysoczanski2010,KW2013,SpWy2016,JekelLiu2020,AMVB2023}.
%Non-commutative probability theory studies various forms of independence that exist for random variables that do not necessarily commute, such as tensor, free \cite{Voiculescu1985,Voiculescu1986}, monotone \cite{Muraki2001}, and boolean independence \cite{SW1997}, as well as various generalizations (e.g.\ \cite{BW1998}), such as mixtures of several types of independence, where the algebras are indexed by the vertices of a graph and the edge relations describe the type of independence between two vertices \cite{Mlot2004,Wysoczanski2010,KW2013,SpWy2016,JekelLiu2020,AMVB2023}.
These types of independence arise in surprisingly diverse contexts.
 Free independence was motivated by the study of free products of $\mathrm{C}^*$ and von Neumann algebras \cite{Avitzour1982,Voiculescu1985}.
The study of interacting quantum particles on various Fock spaces \cite[\S X.7]{ReedSimon1972} \cite{Sharan2023} led to the study of non-commutative Brownian motions \cite{BS1996,Lu1997} and from there to the abstract notions of independence.
Non-commutative independence was found to describe the large $N$ behavior of various types of random matrix models \cite{VoicAsyFree,CHS2018,male2020traffic}, resulting in a fruitful interaction between probability, combinatorics, algebra, and operator algebras.
Non-commutative independences correspond to various product operations, not only on algebras, but also on graphs, which has led to various applications of non-commutative probability and random matrix theory to the spectral theory of graphs \cite{NBGO2004,HO2007,BCRandomPerm,GVK2023,bordenave2023norm}.
%
%The most important of these independences for applications are free independence and classical independence.

Our paper focuses on \emph{graph independence}.
Just as free independence was inspired by free products of groups and describes the distribution of elements of a free product, graph independence is inspired by the graph products of groups developed by Green \cite{Gr90} and describes the distribution of elements of a graph product.
Given a graph $\mathcal{G} = (\mathcal{V},\mathcal{E})$ and groups $\Gamma_v$ for $v \in \mathcal{V}$, the graph product of groups is $\Gamma$ given by quotienting $*_{v \in V(\Gamma)} \Gamma_v$ by the relations $[g,h] = 0$ when $g \in \Gamma_v$ and $h \in \Gamma_w$ and $v \sim w$.
The analogous constructions for $\mathrm{C}^*$-algebras and von Neumann algebras were introduced by M\l{}otkowski and later studied by Caspers and Fima \cite{Mlot2004,CaFi17}.  
(We caution that graph independence has also been variously called $\epsilon$-independence and $\Lambda$-independence \cite{Mlot2004,SpWy2016} in the literature.)

%The interplay between non-commutative independence and random matrix theory is central to non-commutative probability theory. Voiculescu showed that in many models independently chosen $N \times N$ random matrices become freely independent in the large $N$ limit, which has had many consequences both for operator algebras and for random matrix theory.  For instance, this connection is one to prove that Connes-embeddability, or the existence of \emph{some} matrix approximations, is preserved by free products; the analogous result for graph products was shown in \cite{casperscep}.  Random matrix models for graph independence were developed by Charlesworth and Collins based on conjugation by random unitaries \cite{CC2021}, and in this paper we will perform the analogous construction with permutation matrices instead of unitaries.

Our goal is to construct matrix models for graph independence using random permutation matrices, analogous to models studied in the unitary case by Charlesworth and Collins \cite{CC2021}, and also with different terminology by Morampudi and Laumann \cite{ML2019} (see \cite{mageethomasstrongly} for strong convergence results of matrix models for right-angled Artin groups, which are generated by graph independent Haar unitaries). Random permutation matrices are an object of great interest in random matrix theory, and most questions that have been studied for Haar unitary matrices have an analog for permutations.  Analogous to Voiculescu's theorem that independent random Haar unitaries are asymptotically freely independent \cite{VoicAsyFree}, Nica showed that the same is true for independent random permutation matrices \cite{Nica1993}.  Strong convergence (i.e.\ convergence of operator norms of polynomials in our matrices) was studied both in the unitary case \cite{MaleCollins, ParraudHaar, ParraudSmooth} and the permutation case \cite{BCRandomPerm, bordenave2023norm}. Finite free convolution can be defined using unitary conjugation or conjugation by permutation matrices \cite[Section 2]{MSSFiniteFree}, in a manner analogous to free convolution \cite[Theorem 4.1]{CDAmalgam}.

Au, C{\'e}bron, Dahlqvist, Gabriel, and Male showed that conjugation by random permutation matrices  asymptotically produces free independence with amalgamation over the diagonal subalgebra \cite{ACDGM2021}.
Their argument uses traffic spaces, a combinatorial framework developed by Male for the study of permutation-invariant random matrix models which also naturally encompasses tensor, free, and boolean independence \cite{male2020traffic}.
%The extra difficulty in this setting comes from the fact that conjugating by permutation matrices preserves much of the structure of the conjugant (for example, a diagonal matrix remains diagonal) but this also allows us to construct models with desired structural properties.
We will use traffic moments to study permutation models for graph independence, analogous to models studied by \cite{ML2019,CC2021} in the unitary case.
Given matrix approximations for the algebras $(A_v)_{v \in \mathcal{G}}$, we construct matrix approximations for the graph product with amalgamation over the diagonal by conjugating the matrices for $A_v$ by an appropriate random permutation.

One interest of the permutation construction is that it preserves finer algebraic or combinatorial properties of matrices that might be destroyed by unitary conjugation.
For instance, if the matrix approximations for the generators of $A_v$ have integer entries or are permutation matrices, then so will the matrix approximations for the generators of the graph product.
%Similarly, if the matrix approximations for $A_v$ are permutation matrices, then so will be the approximation for the graph product.
This has applications in the setting of sofic groups, which are those that can be simulated or approximated by permutations in a suitable sense (see \S \ref{subsec: sofic}).
In \cite{graphproductofsoficissofic}, it was shown that sofic groups are preserved under graph products, by studying group actions on graphs.
Our results on random permutations give a new probabilistic proof that soficity is preserved by graph products.

\subsection{Main results} \label{subsec: results}

We now describe the construction of the permutation matrix model for graph independence.  Since we will attempt to model graph products, we will need to force certain matrices to commute with each other, and certain matrices to be asymptotically free.
As in \cite{CC2021}, we will accomplish this by taking the models in a tensor product of several copies of $M_N(\bC)$, with matrices having only scalar components in certain tensor factors; in this way we can ensure that matrices which are meant to commute do so.

Heuristically, the index set of this tensor product will be a finite set of \emph{strings} $\cS$.  Given a subalgebra of this larger product formed by replacing some of the tensor factors with copies of $\bC 1_N$, we will think of its elements as corresponding to collections of beads on the strings where the algebra has a non-trivial factor.  Two algebras commute, then, if the beads representing their elements can slide past each other on this collection of strings.
For more detail on this picture, refer to \cite[\S3.2]{CC2021} or more generally \cite{MR2651902}.

The vertex set in a graph product we will think of as a set of \emph{colors}, so matrices will have a color based on which vertex algebra they are approximating an element of.
We specifically use the language of ``colors'' because later in the argument there will be additional graphs with other vertex sets, with edge labelings coming from the algebras.
The information of which tensor factors of a matrix are allowed to be non-scalar is determined by its color.

We will choose our set of strings and the assignments of colors to sets of strings in such a way that matrices will share a string in common precisely when the graph product structure insists that the algebras they are modelling should be freely independent.
%Charlesworth and Collins showed \cite[Section 3.1]{CC2021} that for any finite graph $\Gamma$ with vertex set $\cC$, it is always possible to choose such  a set $\cS$ and a relation $\sstrc$ with this property.
Given a prescribed finite graph $\Gamma$ it is always possible to choose $\cS$ and $\sstrc$ with this property; one approach was outlined in \cite[Section 3.1]{CC2021}.

It will be convenient to implicitly attach the index to each factor and suppress the isomorphisms required to permute the elements, and moreover to make the inclusion of a tensor product over a smaller set into that of a larger set implicit by adding several copies of $\bC$.  To wit, given finite sets $S \subseteq T$ we will often take the embedding
\[
\bigotimes_S M_N(\bC) \cong \paren{\bigotimes_S M_N(\bC)} \otimes \paren{\bigotimes_{T \setminus S} 1_N\bC} \subseteq \bigotimes_T M_N(\bC).\]
After \cite{CC2021}, we will adopt the following terminology:
\begin{itemize}
	\item $\mathcal{S}$ is a finite set whose elements are called \emph{strings};
	\item $\mathcal{C}$ is a finite set whose elements are called \emph{colors};
	\item we are given a relation $\sstrc$ between $\mathcal{S}$ and $\mathcal{C}$;
  \item for $c \in \mathcal{C}$, let $\mathcal{S}_c = \{s: s \sstrc c\}$ and for $s \in \mathcal{S}$, let $\mathcal{C}_s = \{c: s \sstrc c\}$; we assume that $\cS_c \neq \emptyset$ for each $c \in \cC$.
\end{itemize}
We will have that a random matrix colored by $c \in \cC$ will be valued in \[\bigotimes_{\cS_c} M_{N}(\bC) \otimes \bigotimes_{\cS\setminus \cS_c} 1_N\bC.\]
The notation $\sstrc$ is meant to be suggestive: we can think of each tensor factor corresponding to a horizontal strings, and matrices as occupying some subset of these strings (the ones corresponding to $\cS_c$).
For a more explicit picture, see \cite[\S3.2]{CC2021}. Given a simple graph $\cG=(\cC,\cE)$, we say that $\chi\colon [k]\to \cC$ is a \textbf{$\cG$-reduced word} if whenever $1\leq i<j<k$ and $\chi(i)=\chi(j)$, there is an $\ell\in \{i+1,\cdots,j-1\}$ with $(\chi(i),\chi(\ell))\notin \cE$. 

We are now ready to state the main theorem:

\begin{thm} \label{thm: permutation model}
Let $\cG = (\cC, \cE)$ be a simple graph with vertex set $\cC$, $\cS$ be a finite set, and $\sstrc$ be as above so that $\cS_c \cap \cS_{c'} = \varnothing$ if and only if
$(c, c') \in \cE$.

For $N \in \bN$, let $\Delta_{N^{\#\cS}}$ be the conditional expectation onto the diagonal $*$-subalgebra $D_N$ of $\bigotimes_{\cS}M_N(\bC)$.

Let $\chi : [k] \to \cC$ be such that $\chi(1)\cdots\chi(k)$ is a $\cG$-reduced word, $\ell : [k] \to \bN$, and for $i = 1, \ldots, k$, $j = 1, \ldots, \ell(k)$, and $N \in \bN$, let $\Lambda_{i,j}^{(N)} \in D_N$ and $X_{i,j}^{(N)} \in \bigotimes_{S_{\chi(i)}} M_N(\bC)$ be deterministic, with $\sup_{N,i,j} \norm{\Lambda_{i,j}^{(N)}} < \infty$ and $\sup_{N,i,j} \norm{X_{i,j}^{(N)}} < \infty$.

Further, let $\set{\Sigma_{c}^{(N)} : c \in \cC}$ be a family of independent uniformly random permutation matrices, with $\Sigma_c \in \bigotimes_{\cS_c} M_N(\bC)$, and write
\[ \uu{X}_{i,j}^{(N)} = \paren{\Sigma_{\chi(i)}^{(N)}}^* X_{i,j}^{(N)} \Sigma_{\chi(i)}^{(N)} \otimes I_N^{\otimes\cS\setminus\cS_{\chi(i)}} \in \bigotimes_{\cS} M_N(\bC). \]

Then, setting
\[Y_i^{(N)} = \Lambda_{i,1}^{(N)} \uu{X}_{i,1}^{(N)} \cdots \Lambda_{i,\ell(i)}^{(N)} \uu{X}_{i,\ell(i)}^{(N)} \in \bigotimes_{s \in \cS} M_N(\bC),\]
we have
\begin{equation}\label{eqn: graph ind ovre diagonal intro}
  \lim_{N \to \infty} \norm{\Delta_{N^{\#\cS}}[(Y_1^{(N)} - \Delta_{N^{\#\cS}}[Y_1^{(N)}]) \dots (Y_k^{(N)} - \Delta_{N^{\#\cS}}[Y_k^{(N)}])]}_2 = 0 \text{ almost surely.}
  \end{equation}
\end{thm}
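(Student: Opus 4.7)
The plan is to prove $\bE\|Z_N\|_2^2 = O(N^{-1})$, where $Z_N$ denotes the random matrix inside the norm in \eqref{eqn: graph ind ovre diagonal intro}, and then upgrade convergence in expectation to almost sure convergence, either by bounding the higher moments $\bE\|Z_N\|_2^{2p}$ by the same method and invoking Markov--Borel--Cantelli, or by a concentration-of-measure argument on products of symmetric groups (Talagrand or log-Sobolev) applied to the Lipschitz map $\sigma \mapsto \|Z_N(\sigma)\|_2$.

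For the expectation bound I would work in index notation, writing $\|\Delta_{N^{\#\cS}}[W]\|_2^2 = N^{-\#\cS}\sum_{\mathbf{i} \in [N]^\cS} |W_{\mathbf{i},\mathbf{i}}|^2$ and expanding $W = (Y_1 - \Delta Y_1)\cdots(Y_k - \Delta Y_k)$, with each $Y_i$ an alternating product of $\Lambda_{i,j}^{(N)}$ and $\uu{X}_{i,j}^{(N)}$. After the expansion, the expectation becomes a large sum over pairs of multi-index paths, labelled by the deterministic matrices and by entries of the various $\Sigma_c^{(N)}$ and $(\Sigma_c^{(N)})^*$. Independence of the $\Sigma_c^{(N)}$ across $c \in \cC$ factorises the permutation expectation colour by colour, and for each colour $c$, the joint moment of entries of $\Sigma_c^{(N)}$ on $[N]^{\cS_c}$ is given by the permutation analogue of the Weingarten formula: a sum of terms $(M-d)!/M!$ over compatible matchings of source and target indices, with $M = N^{\#\cS_c}$ and $d$ the number of distinct indices involved. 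Expanding in powers of $N^{-1}$ and regrouping, one obtains a sum indexed by \emph{test graphs} in the traffic-probability sense of \cite{male2020traffic,CC2021}: rooted coloured graphs whose vertices are equivalence classes of multi-indices and whose edges are labelled by the deterministic matrices $\Lambda_{i,j}^{(N)}, X_{i,j}^{(N)}$.

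A power-counting argument then controls each test graph $T$: it carries a factor $N^{v(T)-\#\cS}$ with $v(T)$ the number of free index coordinates after imposing the forced coincidences, together with a deterministic trace of matrices that is $O(1)$ by the uniform norm bounds on the $\Lambda_{i,j}^{(N)}, X_{i,j}^{(N)}$. The hypothesis $\cS_c \cap \cS_{c'} = \varnothing \iff (c,c') \in \cE$ determines which coincidence patterns can be forced: coincidences between occurrences of two distinct colours $c, c'$ can arise only along shared strings in $\cS_c \cap \cS_{c'}$. Consequently a test graph reaches the leading power $N^0$ only when two occurrences of some single colour $c = \chi(i) = \chi(j)$ are joined by coincident indices along every string in $\cS_c$. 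By the $\cG$-reduced word hypothesis, the letters $\chi(i+1), \dots, \chi(j-1)$ must include some $\chi(\ell)$ with $(\chi(i), \chi(\ell)) \notin \cE$, whence $\cS_{\chi(i)} \cap \cS_{\chi(\ell)} \neq \varnothing$; the collapse therefore passes through a coincidence that forces the factor $Y_\ell$ to act diagonally on that shared string.

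The principal technical step, and the one I expect to require the most careful bookkeeping, is showing that the centering $Y_i - \Delta Y_i$ exactly cancels the leading-order test graphs. Conceptually, $\Delta Y_i$ is the part of $Y_i$ that acts diagonally on the strings in $\cS_{\chi(i)}$, so the subtraction cancels precisely the test graphs in which $Y_\ell$ is forced to act diagonally there. Once this cancellation is in place, every surviving test graph contributes $O(N^{-1})$, and since only finitely many combinatorial isomorphism classes appear (with $k$ and $\ell$ fixed), one obtains $\bE\|Z_N\|_2^2 = O(N^{-1})$. The strategy runs parallel to the Haar-unitary argument of \cite{CC2021} and to the permutation-with-diagonal-amalgamation argument of \cite{ACDGM2021}; the new ingredient is the careful interplay between the graph $\cG$, the string relation $\sstrc$, and the alternating-word combinatorics of $\chi$.
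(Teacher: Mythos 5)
Your strategy matches the paper's almost exactly: expand $\bE\|Z_N\|_2^2$ as a signed sum of traffic moments, integrate out permutations colour by colour using independence, power-count the resulting test graphs, use the $\cG$-reduced hypothesis to show that the leading terms are exactly cancelled by the centering, and upgrade from convergence of second moments to almost sure convergence via concentration on the symmetric group (the paper uses Maurey's inequality, which is one of the two options you name). The power-counting input the paper uses is the Mingo--Speicher bound on injective traces of test graphs, applied after expanding the trace over partitions $\pi = (\pi_s)_{s\in\cS}$ of the vertex set.

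The one place your sketch has a real gap is the mechanism of the cancellation, which you correctly identify as the hard step. You write that two coincident occurrences of a colour $c$ straddling some $\chi(\ell)$ with $\cS_{\chi(i)}\cap\cS_{\chi(\ell)}\neq\varnothing$ force $Y_\ell$ ``to act diagonally on that shared string,'' and that this is what the centering cancels. But $Y_\ell - \Delta_{N^{\#\cS}}[Y_\ell]$ only cancels contributions in which $Y_\ell$ is forced diagonal on \emph{every} string in $\cS$, not just on one shared string $s\in\cS_{\chi(i)}\cap\cS_{\chi(\ell)}$. Diagonality on a single string is far weaker and does not by itself pair with a $-\Delta_{N^{\#\cS}}[Y_\ell]$ term in the expansion. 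The paper handles this by: (i) expanding the product into terms indexed by $I\subseteq\{\pm 1,\dots,\pm k\}$, (ii) re-expressing everything over the uncollapsed digraph $T$ and exchanging the order of summation so that the $(-1)^{|I|}$ collapse to a Kronecker delta on the condition $J_\pi=\varnothing$, where $J_\pi$ records which blocks $Y_i$ have their endpoints identified under $\bigwedge_{s\in\cS}\pi_s$ (global diagonality), and (iii) proving an inconsistency lemma (via a ``graph of coloured components'' $\GCC(T,\pi,s)$ and a minimality argument on repeated components) showing that the surviving leading-order partitions must have $J_\pi\neq\varnothing$, so nothing survives. Your intuition points in the right direction, but closing this gap requires tracking exactly the paper's $J_\pi$ condition rather than the per-string diagonality you describe, and the step from ``$\cG$-reduced'' to ``some $J_\pi\neq\varnothing$'' is considerably more delicate than finding a single blocking letter.
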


Suppose we are given algebras $A_{c},c\in \cC$, each unitally embedded inside an algebra $\cA$ equipped with a linear functional $\tau$. Then graph independence means that for every $\cG$-reduced word $\chi\colon [k]\to \mathcal{C}$ and every tuple $(a_{c})_{c}\in \prod_{c\in \cC}A_{c}$ with $\tau(a_{c})=0$ for all $c\in \cC$ we have
\[\tau(a_{\chi(1)}\cdots a_{\chi(k)})=0.\]
Equation (\ref{eqn: graph ind ovre diagonal intro}) can thus be interpreted as ``asymptotic graph independence over the diagonal". This type of relative result is necessary for our theorem, since conjugation by permutations leaves the diagonal invariant. Thus, as soon as the graph is not complete, it is impossible for conjugation by permutations (random or deterministic) to create asymptotic graph independence. Conditional independence appears often in the classical setting, and free independence with amalgamation (also called ``operator-valued freeness") also has numerous applications in random matrices \cite{BDJ2008, RadulescuIFGF, CLGMAsyFree}, operator algebras \cite{ BDJ2008,chifan2022nonelementarily, CartanAFP, IPP, RadulescuIFGF, Voiculescu1985}, and graph theory \cite{GVK2023}. This appearance of ``graph independence with amalgamation" is surprising, as it does not have a group analog (unless the amalgam is central in each of the vertex groups). Nevertheless, our theorem suggests the possibility of the theory of ``graph independence with amalgamation", and is the first instance where such a notion has been suggested in the literature.
We leave the development of such a theory as a subject of future research. We expect that once the correct definition is worked out, it will have the same exciting applications to random matrices, operator algebras, and graph theory that conditional independence and free independence with amalgamation do.

The proof of Theorem \ref{thm: permutation model} draws ideas from the tensor-product matrix models of \cite{CC2021} and the traffic moment techniques of \cite{ACDGM2021}. In fact, the model in \cite{CC2021} is precisely the model in Theorem \ref{thm: permutation model} with random unitaries (distributed according to Haar measure) instead of uniformly random permutations. Note that if $k\in \bN$, $R\in [0,+\infty]$ and $C\in M_{k}(\bC)$ with $\|C\|\leq R$, then for any $p\in [1,2],q\in [2,+\infty)$ we have:
\[\|C\|_{2}^{2/p}R^{1-\frac{2}{p}}\leq \|C\|_{p}\leq \|C\|_{2}\leq \|C\|_{q}\leq R^{1-\frac{2}{q}}\|C\|_{2}^{2/q}.\]
Hence Theorem \ref{thm: permutation model} recovers \cite[Theorem 1.3]{ACDGM2021}, by considering a graph with no edges.

Furthermore, we use this result to give another proof that the graph product of sofic groups is sofic.  Sofic groups, defined by Gromov \cite{GromovSurjun} and named by Benjy Weiss \cite{WeissSofic}, are groups that can be approximated by permutations.  Suppose $\Gamma$ is a countable group generated by $(g_j)_{j \in \bN}$, and write $g_{-j} = g_j^{-1}$ for notational convenience.  Then $\Gamma$ is sofic if and only if there exist $N \times N$ permutation matrices $T_j^{(N)}$ such that, setting $T_{-j}^{(N)} = (T_j^{(N)})^*$, we have
\[
\lim_{N \to \infty} \tr_N(T_{j(1)}^{(N)} \dots T_{j(m)}^{(N)}) = \delta_{g_{j(1)} \dots g_{j(m)}=e}
\]
for all $m \in \bN$ and $j(1)$, \dots, $j(m) \in \bN \cup (-\bN)$ (for proof of this characterization, see Proposition \ref{prop: sofic}).  For example, one can deduce that the free group $F_d$ is sofic from Nica's theorem that Haar random unitaries $T_1^{(N)}$, \dots, $T_d^{(N)}$ are almost surely asymptotically free \cite[\S 4.1]{Nica1993}. Related to soficity, we give an application of Theorem \ref{thm: permutation model} to the entropy theory of graph product von Neumann algebras in \cite{AIMSuperTeamII}, which also relies on a modified version of soficity suitable for tracial $*$-algebras.

Sofic groups themselves have seen numerous applications in recent years: their Bernoulli shift actions can be completely classified (by \cite{Bow, BowenOrn, SewardOrn}); they are known to satisfy the determinant conjecture \cite{ElekSzaboDeterminant}(a conjecture arising in the theory of $L^{2}$-invariants) and consequently their $L^{2}$-torsion is well-defined \cite{LuckBook}; they are known to admit a version of L\"{u}ck approximation \cite{ElekSzaboDeterminant}; they satisfy Gottschalk's surjunctivity conjecture \cite{GromovSurjun}; and they are known to satisfy Kaplansky's direct finiteness conjecture \cite{ElekSzaboDF}.  In fact, any group for which one of these properties is known is also known to be sofic; it is a large open question whether or not every group is sofic. We refer the reader to \cite{LewisICM} for further applications of sofic groups, particularly to ergodic theory.

The fact that graph products of sofic groups are sofic was shown by \cite{graphproductofsoficissofic} using a construction of group actions on graphs.  Our Theorem \ref{thm: permutation model} leads to a new probabilistic proof of this fact, in the spirit of \cite{Nica1993} and \cite{ACDGM2021}.

\begin{prop} \label{prop: sofic graph product}
Let $\mathcal{G} = (\mathcal{C},\mathcal{E})$ be a finite graph and let $(G_c)_{c \in \mathcal{C}}$ be countable sofic groups.  Then $\gp_{c \in \mathcal{G}} G_c$ is sofic.
\end{prop}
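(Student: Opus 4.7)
The strategy is to build an explicit sofic approximation of $\Gamma := \gp_{c \in \cC} G_c$ by applying Theorem \ref{thm: permutation model} to permutation sofic approximations of each vertex group. First I would fix, as in the statement of Theorem \ref{thm: permutation model}, a finite set $\cS$ and a relation $\sstrc$ with $\cS_c \cap \cS_{c'} = \varnothing$ iff $(c,c') \in \cE$, which is possible by \cite[\S3.1]{CC2021}. For each $c$, by the permutation characterization of soficity (Proposition \ref{prop: sofic}), fix group homomorphisms $\pi_c^{(N)} : G_c \to S_N$ with $\tr_N(\pi_c^{(N)}(g)) \to \delta_{g=e}$ for each $g$. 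Place each $\pi_c^{(N)}(g)$ on one distinguished string of $\cS_c$ to obtain a permutation $\tilde\pi_c^{(N)}(g) \in \bigotimes_{\cS_c} M_N(\bC)$, then conjugate by independent uniformly random permutations $\Sigma_c^{(N)} \in \bigotimes_{\cS_c} M_N(\bC)$ and embed with identity factors on $\cS \setminus \cS_c$ to define
\[
\sigma_c^{(N)}(g) = \paren{\Sigma_c^{(N)}}^* \tilde\pi_c^{(N)}(g) \Sigma_c^{(N)} \otimes I_N^{\otimes \cS \setminus \cS_c}.
\]

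The key design feature is that for any edge $(c,c') \in \cE$, the sets $\cS_c$ and $\cS_{c'}$ are disjoint, so $\sigma_c^{(N)}(g)$ and $\sigma_{c'}^{(N)}(g')$ act on disjoint tensor factors and literally commute as permutation matrices. Each $\sigma_c^{(N)}$ is a group homomorphism by construction, so by the universal property of graph products the family $(\sigma_c^{(N)})_{c \in \cC}$ assembles into a single homomorphism $\sigma^{(N)} : \Gamma \to S_{N^{\#\cS}}$ whose images are permutation matrices. All that remains is to verify, for every fixed $\gamma \in \Gamma$, that $\tr_{N^{\#\cS}}(\sigma^{(N)}(\gamma)) \to \delta_{\gamma=e}$ almost surely; a standard Borel--Cantelli argument across the countable set $\Gamma$ then produces an actual sofic approximation.

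For $\gamma = e$ the trace is identically $1$. Otherwise, using the Green normal form for graph products, I would write $\gamma = g_1 \cdots g_k$ with $g_i \in G_{\chi(i)} \setminus \{e\}$ and $\chi(1)\cdots\chi(k)$ a $\cG$-reduced word, and apply Theorem \ref{thm: permutation model} with $\ell(i) = 1$, $\Lambda_{i,1}^{(N)} = I$, and $X_{i,1}^{(N)} = \tilde\pi_{\chi(i)}^{(N)}(g_i)$, so that the resulting $Y_i^{(N)}$ is exactly $\sigma_{\chi(i)}^{(N)}(g_i)$. Since the normalized trace satisfies $|\tr(A)| \leq \|A\|_2$ and $\tr(A) = \tr(\Delta_{N^{\#\cS}}[A])$, it suffices to prove $\|\Delta_{N^{\#\cS}}[Y_1^{(N)} \cdots Y_k^{(N)}]\|_2 \to 0$ almost surely. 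Expanding each $Y_i = (Y_i - \Delta[Y_i]) + \Delta[Y_i]$ and distributing, the fully centered term is handled immediately by Theorem \ref{thm: permutation model}; every other term contains at least one factor $\Delta[Y_i^{(N)}]$, which is the diagonal of a permutation matrix representing $g_i \neq e$ and hence satisfies
\[
\|\Delta[Y_i^{(N)}]\|_2^2 = \tr_N(\pi_{\chi(i)}^{(N)}(g_i)) \to 0
\]
by soficity. Combining this with $\|Y_j^{(N)}\|_\infty = 1$ and the contractivity $\|\Delta[\cdot]\|_2 \leq \|\cdot\|_2$ kills the remaining terms.

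I expect the main obstacle to be organizational rather than analytic: one must arrange the construction so that $\sigma^{(N)}$ is simultaneously a genuine group homomorphism of $\Gamma$, exactly respects the edge-commutation relations, and has the correct asymptotic trace behavior. The first two items are enforced by the disjointness $\cS_c \cap \cS_{c'} = \varnothing$ on edges, combined with the fact that each $G_c$ is conjugated by a single random permutation. The third item is the asymptotic input and reduces cleanly to Theorem \ref{thm: permutation model} once the trace question is translated into a $\|\cdot\|_2$-estimate on $\Delta_{N^{\#\cS}}$ and the centered/uncentered decomposition is applied.
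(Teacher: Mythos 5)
Your overall strategy -- conjugate permutation models for the vertex groups by independent uniformly random permutations on the appropriate tensor factors, then invoke Theorem~\ref{thm: permutation model} to get the trace condition for $\cG$-reduced words -- is exactly the paper's strategy, and the reduction from $\|\Delta[(Y_1-\Delta Y_1)\cdots(Y_k-\Delta Y_k)]\|_2 \to 0$ to $\|\Delta[Y_1\cdots Y_k]\|_2\to 0$ via $\|\Delta Y_i\|_2^2 = \tr_N(\pi_{\chi(i)}^{(N)}(g_i)) \to 0$ is correct and matches the paper.

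There is, however, a genuine gap at the foundation of your construction. You ``fix group homomorphisms $\pi_c^{(N)}\colon G_c\to S_N$ with $\tr_N(\pi_c^{(N)}(g))\to\delta_{g=e}$'' and then lean on the universal property of graph products to assemble the $\sigma_c^{(N)}$'s into an honest homomorphism $\sigma^{(N)}\colon\Gamma\to S_{N^{\#\cS}}$. But sofic groups do \emph{not} in general admit honest homomorphisms into symmetric groups with that trace behaviour -- that would force, for finitely generated $G_c$, residual finiteness, and there are sofic (indeed amenable) groups that are not residually finite. What soficity (Proposition~\ref{prop: sofic}) actually provides is a family of permutation matrices $T_{c,j}^{(N)}$ indexed by a \emph{generating set} satisfying the asymptotic trace condition on \emph{words in the generators}; these are not a well-defined map on $G_c$, only an asymptotic homomorphism. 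Once the $\pi_c^{(N)}$ are merely asymptotic, your $\sigma_c^{(N)}$'s are also merely asymptotic, so the universal property does not apply and you never get a map $\sigma^{(N)}$ on $\Gamma$; you only get matrices attached to generators.

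This is precisely the issue the paper's proof is designed to handle. It works throughout at the level of generator words, and verifies the trace condition~\eqref{eq: sofic matrix version} by an induction on word length: after grouping adjacent same-colour generators and using exact commutation, it separates into a case where some colour block already multiplies to $e$ in $G_{\chi(i)}$ -- in which case that block is swapped out for the identity, using that the trace of a permutation word representing $e$ tends to $1$, so the Hamming/$\|\cdot\|_2$-error tends to $0$, and induction applies -- and the remaining case where every block represents a nontrivial element, where your Theorem~\ref{thm: permutation model} argument applies as you described. Without some version of this reduction (or, alternatively, an explicit verification that the map $\gamma\mapsto \sigma_{\chi(1)}^{(N)}(g_1)\cdots\sigma_{\chi(k)}^{(N)}(g_k)$ obtained from Green normal forms is asymptotically multiplicative), verifying $\tr(\sigma^{(N)}(\gamma))\to\delta_{\gamma=e}$ pointwise on $\Gamma$ does not establish soficity, because the almost-multiplicativity condition~\eqref{eq: sofic assumption 1} is not addressed.
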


We remark that a key trick for the proof is that for a non-identity word $g_{j(1)} \dots g_{j(m)}$, the permutation matrix satisfies
\[
\norm{\Delta(T_{j(1)}^{(N)} \dots T_{j(m)}^{(N)})}_2 \to 0
\]
because $\Delta(T_{j(1)}^{(N)} \dots T_{j(m)}^{(N)})$ is diagonal projection matrix whose trace tends to zero.  This is why the graph independence over the diagonal in Theorem \ref{thm: permutation model} reduces to plain graph independence in this case.

\textbf{Acknowledgements}.  We thank Dimitri Shlyakhtenko for his mentorship and lively discussions; and we thank IPAM and the Lake Arrowhead Conference Center for hosting the Quantitative Linear Algebra long program second reunion conference, where some of these discussions took place.  We thank the American Institute of Mathematics SQuaRES program for hosting us for a week each in April 2022 and April 2023 to collaborate on this project. BH was supported by the NSF grant DMS-2000105. IC was supported by long term structural funding in the form of a Methusalem grant from the Flemish Government. DJ was supported by postdoctoral fellowship from the National Science Foundation (DMS-2002826). SKE was supported by a Simons postdoctoral fellowship. BN was supported by NSF grant DMS-1856683.

\section{Preliminaries}

\subsection{Combinatorial background}
We collect here several notions from combinatorics which will be of use later in the paper.

Let $S$ be a finite set. A \emph{partition} on $S$ is a collection $\set{B_1, \ldots, B_k}$ of non-empty subsets of $S$ which are disjoint and whose union is $S$: that is, $S = B_1 \sqcup \cdots \sqcup B_k$.
The sets $B_i$ are called the \emph{blocks} of $\pi$.

A partition $\pi$ induces an equivalence relation on $S$.
For $s, t \in S$ we write $s \sim_\pi t$ if and only if $s$ and $t$ are in the same block of $\pi$ (i.e., there is $B \in \pi$ with $s, t \in B$).

The set of partitions of $S$ is denoted by $\cP(S)$.
It is partially ordered by reverse refinement: namely, given $\pi, \rho \in \cP(S)$, we write $\pi \leq \rho$ if and only if every block of $\rho$ is a union of blocks of $\pi$.
This order makes $\cP(S)$ into a lattice, and so for partitions $\pi, \sigma \in \cP(S)$ we have partitions $\pi \vee \sigma$ and $\pi \wedge \sigma$, their join and meet respectively.

For $N \in \bN$, we write $[N] = \set{1, \ldots, N}$.

Graphs make an appearance in our work in two distinct senses: as index sets to graph products, and as labels of traffic moments.
We will draw an explicit distinction between these two types of objects in an attempt to keep the contexts clear.

\begin{defn}
A \emph{(simple undirected) graph} is a pair $\cG = (\cV, \cE)$ where $\cV$ is a finite set consisting of \emph{vertices}, and $\cE \subset \cV \times \cV$ is a set of \emph{edges}.
We insist that $\cE$ is symmetric (i.e., $(x, y) \in \cE$ if and only if $(y, x) \in \cE$) and non-reflexive (i.e., $(x, x) \notin \cE$ for any $x \in \cV$; that is, we do not allow self-loops).

An \emph{(undirected) multigraph} is the same except $\cE$ is a multiset, allowing parallel edges.
\end{defn}

\begin{defn}\label{defn:digraph}
A \emph{digraph} (or \emph{directed (multi-)graph}) is a tuple $G = (V, E, \cdot_+, \cdot_-)$ where $V$ is a finite set of \emph{vertices}, $E$ is a finite set of \emph{edges}, and $\cdot_-, \cdot_+ : E \to V$ are functions specifying the \emph{source} and \emph{target} of each edge.
Note that we allow parallel edges and self-loops.

Given a digraph $G$, we will write $V(G)$ and $E(G)$ for its vertex and edge sets, respectively.
\end{defn}

\subsection{Graph products}

We first explain graph products of groups, both as background for our application to sofic groups and also motivation for the graph products of von Neumann algebras.  As the vertices of the graph will label distinct groups or algebras, we will often refer to them as ``colors'', saving ``vertices'' for the objects in digraphs.

Let $\mathcal{G} = (\cC,\cE)$ be a finite graph, and let $(G_c)_{c \in \cC}$ be groups.  The \emph{graph product} $\Gamma = \gp_{c \in \mathcal{G}} \Gamma_c$ is defined as the free product of $(\Gamma_c)_{c \in \cC}$ modulo the normal subgroup generated by commutators $[g,h]$ for $g \in \Gamma_c$ and $h \in G_w$ for adjacent pairs of vertices $(v,w)$ in the graph $\Gamma$.  Every element of $\gp_{v \in \Gamma} \Gamma_c$ can be expressed as some product
\[
g_1 \dots g_m,
\]
where $g_j \in \Gamma_{\chi(j)}$ for $j = 1$, \dots, $m$.  Two words yield the same element of $\Gamma$ if and only if the one word can be transformed into the other by the following operations:
\begin{itemize}
    \item Combining two letters $g_i$ and $g_{i+1}$ into a single element $(g_i g_{i+1})$ if $\chi(i) = \chi(i+1)$.
    \item Conversely, replacing a single letter $g_i$ by two letters $g_i'$, $g_i''$ such that $g_i' g_i'' = g_i$ in $\Gamma_{v(i)}$.
    \item Removal or insertion of an identity letter $e$.
    \item Permuting two adjacent letters $g_i$ and $g_{i+1}$ if $\chi(i) \sim \chi(i+1)$ in $\mathcal{G}$.
\end{itemize}
The operations of commutation, combining letters, and removing identity letters can be used to simplify a word as much as possible.  A word is called $\mathcal{G}$-\emph{reduced} if there are no identity letters and it is not possible to perform commutations so that two letters can be combined, or equivalently, between any two letters $g_i$ and $g_j$ with $\chi(i) = \chi(j)$, there exists some $g_k$ with $\chi(k)$ not equal or adjacent to $v(i)$ (hence, it not possible to combine $g_i$ and $g_j$ together since we cannot commute $g_i$ past $g_k$).  Every word is equivalent to a reduced word, and two reduced words represent the same group element if and only if they are equivalent by commutation operations (see the discussion following Definition 3.5 of \cite{Gr90}).

We will now define the graph product of von Neumann algebras, and some important related notions.
We will have a graph $\cG = (\cC, \cE)$ and a collection of finite tracial von Neumann algebras $(M_c, \tau_c)$ for each $c \in \cC$.
The graph product will be constructed as finite von Neumann algebra containing a copy of each $M_c$ in such a way that $M_c$ and $M_{c'}$ are in tensor product position if $(c, c') \in \cE$, and in free position otherwise.

\begin{defn}
Let $\cG = (\cC, \cE)$ be a graph.
We say that a word $(c_1, \ldots, c_n) \in \cC^n$ is \emph{$\cG$-reduced} provided that whenever $i < k$ are such that $c_i = c_k$, there is some $j$ with $i < j < k$ so that $(c_i, c_j) \notin \cE$.
\end{defn}

If $(c_1, \ldots, c_n) \in \cC^n$ is such a word and $x_i \in M_{c_i}$, then saying the word is \emph{not} $\cG$-reduced is exactly saying that two $x_i$'s from the same algebra could be permuted next to each other and multiplied, giving a shorter word.

\begin{defn}
Let $\cG = (\cC, \cE)$ be a graph, $(M, \tau)$ be a tracial von Neumann algebra, and for each $c \in \cC$ let $M_c \subseteq M$ be a von Neumann sub-algebra.
Then the algebras $(M_c)_{c \in \cC}$ are said to be \emph{$\cG$-independent} if: $M_c$ and $M_d$ commute whenever $(c, d) \in \cE$; and whenever $(c_1, \ldots, c_n)$ is a $\cG$-reduced word and $x_1, \ldots, x_n \in M$ are such that $x_i \in M_{c_i}$ and $\tau(x_i) = 0$, we have $\tau(x_1\cdots x_n) = 0$.

Conversely, given a collection $(M_c, \tau_c)$ of tracial von Neumann algebras, their \emph{graph product} $(M, \tau) = \gp_{c \in \cG} (M_c, \tau_c)$ is the von Neumann algebra generated by copies of each $M_c$ which are $\cG$-independent, so that $\tau|_{M_c} = \tau_c$.
(That the graph product exists and is unique was shown in \cite{Mlot2004}).

When the trace is clear from context, we may write simply $M = \gp_{c \in \cG} M_c$.
\end{defn}

Notice that if $\cG = (\cC, \emptyset)$ then $\gp_{c \in \cG} M_c = \Asterisk_{c \in \cC} M_c$; on the other hand, if $\cG$ is a complete graph, then $\gp_{c \in \cG} M_c = \bigotimes_{c \in \cC} M_c$.

\section{Proof of the main result}
\label{sec:permutationtraffic}

This section will present the proof of Theorem \ref{thm: permutation model}.
Our strategy will be to first establish that the convergence in \eqref{eqn: graph ind ovre diagonal intro} holds in $L^2$ (and hence in probability).
Having done so, we will use concentration estimates to upgrade to almost sure convergence.

We use the same notation introduced in \S \ref{subsec: results}.  However, throughout most of this section, we will suppress the explicit dependence upon $N$ and write merely $\uu{X}_{i,j}$, $\Sigma_{i,j}$, or $\Delta$, for example.
Combinatorial arguments are prone to intricate notation, and while this paper is no exception, we promise the reader that readability was foremost in our minds when deciding whether or not to introduce additional notation. Nevertheless, should the reader find themselves overwhelmed by the notation at any point, we invite them to consult Appendix~\ref{apndx: notation} for a summary table.  The appendix also includes worked visual examples of the constructions used in the proof.

\subsection{Traffic moments} \label{subsec:trafficmoments}

Similar to \cite{ACDGM2021}, the proof of Theorem~\ref{thm: permutation model} will use ideas from Camille Male's traffic independence framework \cite{male2020traffic} which was introduced to describe the behaviour of random matrix models which are invariant under conjugation by permutation matrices.  Therefore, we introduce the notation of traffic moments here, focusing on the case of matrices since we do not need abstract traffic spaces here.

\begin{defn}
  A \emph{\tdg{} over $M_N(\bC)$} is a finite (multi)\dg{} with a labeling of the edges by elements of $M_N(\bC)$: that is,
\begin{itemize}
    \item a finite set $V$ of vertices;
    \item a finite set $E$ of oriented edges;
    \item two maps $E \to V, e \mapsto e_-$ and $E \to V, e \mapsto e_+$ specifying the source and target of the edge $e$, respectively;
    \item a labeling $E \to M_N(\bC), e \mapsto X_e$ assigning a matrix to each oriented edge.
\end{itemize}

Continuing our convention from Definition~\ref{defn:digraph}, given a \tdg{} $T$ we will write $V(T)$ and $E(T)$ for its vertex and edge sets, respectively.
\end{defn}

\begin{remark}
  It is more common to refer to these objects simply as ``test graphs''.
  We emphasize ``\dg{}s'' to make clear the distinction from the (simple) graphs in this paper which index graph products.
  In particular, \dg{}s are directed and allow parallel edges and self-loops.
\end{remark}

\begin{notation}
\label{not: components and quotients}
  For a \dg{} $T$, we denote by $\Comp(T)$ the set of weakly connected components of $T$ (i.e., the components of the undirection of $T$).
  We will also write $\Comp(T)$ for the components of a \tdg{} $T$, where each component can be viewed as a \tdg{} by restricting the labeling $e \mapsto X_e$.

  Given a partition $\pi$ of the vertices of a \dg{} $T$, we will write $T/\pi$ for the \dg{} induced by identifying vertices according to $\pi$.
  (Specifically, $V(T/\pi) = \pi$, $E(T/\pi) = E(T)$, and the source and range maps become $e \mapsto [e_\pm]_{\pi}$.)
  If $T$ is a \tdg{}, we will treat $T/\pi$ as a \tdg{} with the same edge labels.
\end{notation}

\begin{defn}
\label{defn:testtrace}
  For a \tdg{} $T$ over $M_N(\bC)$, the \emph{trace} $\tau_N(T)$ is defined as
\[
\tau_N(T) = \frac{1}{N^{\# \Comp(T)}} \sum_{i: V(T) \to [N]} \prod_{e \in E(T)} (X_e)_{i(e_+),i(e_-)}.
\]
Meanwhile, the \emph{injective trace} is defined as
\[
\tau_N^\circ(T) = \frac{1}{N^{\# \Comp(T)}} \sum_{i: V(T) \hookrightarrow [N]} \prod_{e \in E(T)} (X_e)_{i(e_+),i(e_-)};
\]
here the sum is only over injective maps $i: V(T) \hookrightarrow [N]$.
\end{defn}

To prove that the quantity in Theorem \ref{thm: permutation model} converges to zero in $L^2$, we must show that the expectation of its square goes to zero.  Thus, to prove the theorem, we will study the expectation of the trace of various combinations of matrices using the operations of addition, multiplication, and conditional expectation onto the diagonal.  Such expressions can be realized by traffic moments, as we will illustrate with a few examples.

For matrices $A$, $B$, $C$, and $D$ the trace $\tr(ABCD)$ would be represented as the trace of the following \tdg{}
\begin{center}
\begin{tikzpicture}
    \node[circle,fill,label= above right: $0$] (0) at (2,2) {};
    \node[circle,fill,label=above left: $1$] (1) at (0,2) {};
    \node[circle,fill,label=below left: $2$] (2) at (0,0) {};
    \node[circle,fill,label=below right: $3$] (3) at (2,0) {};

    \draw[<-] (0) to node[auto,swap] {$A$} (1);
    \draw[<-] (1) to node[auto,swap] {$B$} (2);
    \draw[<-] (2) to node[auto,swap] {$C$} (3);
    \draw[<-] (3) to node[auto,swap] {$D$} (0);
\end{tikzpicture}
\end{center}
Next, to express $\tr(A \Delta(BCD)) = \tr(\Delta(A) BCD) = \tr(\Delta(A) \Delta(BCD))$, we would use the \tdg{}
\begin{center}
\begin{tikzpicture}
    %\node[circle,fill,label= above right: $0$] (0) at (2,2) {};
    \node[circle,fill,label=left: $1$] (1) at (1,1.5) {};
    \node[circle,fill,label=below left: $2$] (2) at (0,0) {};
    \node[circle,fill,label=below right: $3$] (3) at (2,0) {};

    \path[<-] (1) edge [out=60,in=120,looseness=20] node[above] {$A$} (1);
    \path[<-] (1) edge node[auto,swap] {$B$} (2);
    \path[<-] (2) edge node[auto,swap] {$C$} (3);
    \path[<-] (3) edge node[auto,swap] {$D$} (1);
\end{tikzpicture}
\end{center}
By identifying the vertices $0$ and $1$ together, we force the indices for the input and output of the edge for $A$ to agree, hence taking the diagonal of $A$.  Next, consider $\tr(\Delta(A) \Delta(B \Delta(C) D))$.  In this case, we would identify the two endpoints of the edge for $C$, and then identify the start point of the edge for $B$ with the endpoint of the edge $D$, resulting in the following \tdg{}
\begin{center}
\begin{tikzpicture}
    \node[circle,fill,label=left: $1$] (1) at (0,2) {};
    \node[circle,fill,label=left: $2$] (2) at (0,0) {};

    \path[<-] (1) edge[out=60,in=120,looseness=20] node[above] {$A$} (1);
    \path[<-] (1) edge[out=240, in=120, looseness=1.4] node[left] {$B$} (2);
    \path[<-] (2) edge[out=240, in=300, looseness=20] node[below] {$C$} (2);
    \path[<-] (2) edge[out=60, in=300, looseness=1.4] node[right] {$D$} (1);
\end{tikzpicture}
\end{center}

We will leave the full details to the later proof, but the simpler case $\tr((Z_1 - \Delta(Z_1)) \dots (Z_k - \Delta(Z_k)))$, as in \cite[\S 4.2]{ACDGM2021}, gives a sense of the method.  Expanding this expression by multilinearity yields terms indexed by subsets $I \subseteq [k]$, where $I$ represents the set of indices $j$ where the $-\Delta(Z_j)$ term is chosen and the $[k] \setminus I$ is the set of indices where the term $Z_j$ is chosen.  One thus obtains the trace of a product of $Z_j$'s and $\Delta(Z_j)$'s times $(-1)^{|I|}$.  The trace of the product can be represented as the trace of a \tdg{} as follows.  Let $T$ be a $k$-cycle \dg{} with edges labelled by $Z_1$, \dots, $Z_k$.  Given $I \subseteq [k]$, identify the start and end vertices of the edge for $A_j$ for each $j \in I$, and let $T_I$ be the quotient \dg{}.  This has the effect of replacing $Z_j$ with $\Delta(Z_j)$.  One thus obtains
\[
\tr((Z_1 - \Delta(Z_1)) \dots (Z_k - \Delta(Z_k))) = \sum_{I \subseteq [k]} (-1)^{|I|} \tau[T_I].
\]
In \cite{ACDGM2021}, the authors use a combinatorial formula for $\tau(T_I)$ in terms of the injective trace and then exploit cancellation.  Our goal in the next section will be to find such a combinatorial formula for the color/string model of graph products.

Another wrinkle in the case of Theorem \ref{thm: permutation model} is that we consider monomials with coefficients in the diagonal matrices, expressions such as $\tr(\Lambda_1 Z_1 \dots \Lambda_m Z_m)$, where $Z_1$, \dots, $Z_m$ are the random matrices in our model and $\Lambda_1$, \dots, $\Lambda_m$ are deterministic diagonal matrices.  Since $\Delta(\Lambda_j) = \Lambda_j$, we can put the matrices $\Lambda_j$ on self-looping edges.
Thus, for example, the expression $\tr(\Lambda_1 Z_1 \dots \Lambda_5 Z_5)$ would be represented by the \tdg{}
\begin{center}
    \begin{tikzpicture}
        \node[circle,fill] (0) at (180:1) {};
        \node[circle,fill] (1) at (252:1) {};
        \node[circle,fill] (2) at (324:1) {};
        \node[circle,fill] (3) at (36:1) {};
        \node[circle,fill] (4) at (108:1) {};

        \path[<-] (0) edge[out=144, in=216, looseness = 10] node[auto,swap] {$\Lambda_1$} (0);
        \path[<-] (0) edge node[auto,swap] {$Z_1$} (1);
        \path[<-] (1) edge[out=216, in=288, looseness = 10] node[auto,swap] {$\Lambda_2$} (1);
        \path[<-] (1) edge node[auto,swap] {$Z_2$} (2);
        \path[<-] (2) edge[out=288, in=0, looseness = 10] node[auto,swap] {$\Lambda_3$} (2);
        \path[<-] (2) edge node[auto,swap] {$Z_3$} (3);
        \path[<-] (3) edge[out=0, in=72, looseness = 10] node[auto,swap] {$\Lambda_4$} (3);
        \path[<-] (3) edge node[auto,swap] {$Z_4$} (4);
        \path[<-] (4) edge[out=72, in=144, looseness = 10] node[auto,swap] {$\Lambda_5$} (4);
        \path[<-] (4) edge node[auto,swap] {$Z_5$} (0);
    \end{tikzpicture}
\end{center}

The following notation will be convenient for studying such graph moments.
\begin{notation} \label{not: loop graph}
  Let $T$ be a \tdg{} with labeling $X: E(T) \to M_N(\bC)$.  Let $\Lambda: V(T) \to \Delta(M_N(\bC))$ be an assignment of diagonal matrices to each vertex of the \dg{}.  Then we denote by $\mathring{T}$ the \tdg{} obtained by adding a self-looping edge to each vertex $v \in V(T)$ and labeling this edge with the diagonal matrix $\Lambda_v$.
\end{notation}

Thus, the example \dg{} above could be written as $\mathring{T}$ where $T$ is a cycle \dg{} labeled by the matrices $Z_1$, \dots, $Z_5$.  Note that in the setup of Notation \ref{not: loop graph}, we have
\[
\tau_N(\mathring{T}) = \frac{1}{N^{\# \Comp(T)}} \sum_{i: V(T) \to [N]} \prod_{v \in V(T)} (\Lambda_v)_{i(v),i(v)} \prod_{e \in E(T)} (X_e)_{i(e_+),i(e_-)}.
\]

\subsection{Combinatorial computation of graph moments} \label{subsec: combinatorial graph moments}

The arguments in this section will introduce several \tdg{}s constructed from a given \tdg{}.
We invite the reader to follow Appendix~\ref{asubsec:tgraph ex} in parallel for a worked example, beginning with Notation~\ref{not: rose}.

Based on the discussion above, we wish to express the asymptotics of certain traffic moments.
It will be convenient to work with a slight generalization of the setting of Theorem~\ref{thm: permutation model}.
\begin{itemize}
  \item Let $T$ be a weakly connected \dg{} together with a coloring $\chi: E(T) \to \mathcal{C}$.
  \item Let $\set{\Sigma_c^{(N)} : c \in \cC}$ be as in the statement of Theorem~\ref{thm: permutation model} and for each $c \in \cC$ let $\sigma_c$ be the permutation (of $[N]^{\cS_c}$) corresponding to $\Sigma_c$.
  \item For each $e \in E$, let $X_e \in \bigotimes_{\cS_{\chi(e)}} M_N(\bC)$ be a matrix and set
  \[ \uu{X}_e = \paren{\Sigma^{(N)}_{\chi(e)}}^* X_e \Sigma^{(N)}_{\chi(e)} \otimes I_N^{\otimes\cS\setminus\cS_{\chi(e)}} \in \bigotimes_{\cS}M_N(\bC). \]
\end{itemize}

\begin{notation}
  For ease of notation, we will treat elements of $[N]^{\cS_{\chi(e)}}$ as indices to the matrix $X_e$, and elements of $[N]^{\cS}$ as indices to the matrix $\uu{X}_e.$
\end{notation}

%Suppose that $T$ is a graph as above.  Then the \emph{trace} $\tr(T)$ is defined as
We will view $T$ as a (random) \tdg{} with edge labelled by the $\uu{X}_e$'s, and applying the above definition we have
\begin{align} \label{eq:traffictrace}
\tau_{N^{\# \cS}}(T) &= \frac{1}{N^{\# \mathcal{S}}} \sum_{i: V(T) \to [N]^{\mathcal{S}}} \prod_{e \in E(T)} (\uu{X}_e)_{i(e_+),i(e_-)}, \\
\label{eq:traffictrace2}
	\tau_{N^{\# \cS}}^\circ(T) &= \frac{1}{N^{\# \mathcal{S}}} \sum_{i: V(T) \hookrightarrow [N]^{\mathcal{S}}} \prod_{e \in E(T)} (\uu{X}_e)_{i(e_+),i(e_-)}.
\end{align}
Similarly, with $\Lambda : V(T) \to \Delta(M_N(\bC))$ and $\mathring{T}$ as in Notation~\ref{not: loop graph}, we have
\begin{equation} \label{eq:traffictrace3}
\tau_{N^{\# \cS}}(\mathring{T}) = \frac{1}{N^{\# \mathcal{S}}} \sum_{i: V(T) \to [N]^{\mathcal{S}}} \prod_{v \in V(T)} (\Lambda_v)_{i(v),i(v)} \prod_{e \in E(T)} (\uu{X}_e)_{i(e_+),i(e_-)}.
\end{equation}

To make the computation of $\bE\tau(\mathring{T})$ (an exercise in counting permutations) tractable, we first separate terms based on which of their $i$-coordinates agree for which vertices.
For each vertex $v$, $i(v)$ is an element of $[N]^{\cS}$, or in other words a tuple indexed by $\cS$.
We let $i_s(v)$ be the $s$-coordinate of this tuple, so that $i_s: V(T) \to [N]$ for each $s$.
Let $\ker(i_s)$ be the partition of $V(T)$ where $v$ and $w$ are in the same block if and only if $i_s(v) = i_s(w)$.
We can then sort the terms in the sum \eqref{eq:traffictrace3} based on the values of $\ker(i_s)$ for $s \in \cS$:
\begin{equation} \label{eq:kernelexpansion}
  \tau_{N^{\# \cS}}(\mathring{T}) = \sum_{\pi \in \mathcal{P}(V(T))^{\cS}} \frac{1}{N^{\# \mathcal{S}}} \sum_{\substack{i: V(T) \to [N]^{\mathcal{S}} \\ \forall s \in \mathcal{S}, \ker(i_s) = \pi_s }} \prod_{v \in V(T)} (\Lambda_v)_{i(v),i(v)} \prod_{e \in E(T)} (\uu{X}_e)_{i(e_+),i(e_-)},
\end{equation}
where $\pi_s \in \mathcal{P}(V(T))$ is the $s$-th coordinate of $\pi$.
We write
\begin{equation} \label{eq:gamma}
  \gamma_N(T,\pi) = \frac{1}{N^{\# \mathcal{S}}}
  \sum_{\substack{i: V(T) \to [N]^{\mathcal{S}} \\ \forall s \in \mathcal{S}, \ker(i_s) = \pi_s }}
  \prod_{v \in V(T)} (\Lambda_v)_{i(v),i(v)}
  \prod_{e \in E(T)} (\uu{X}_e)_{i(e_+),i(e_-)}.
\end{equation}

Specifically, the role of $\pi$ is to record which matrix indices agree with each other: $v \sim_{\pi_s} w$ means that the $s$-component of the index at vertex $v$ is equal to that of the index at vertex $w$.

In order for $\gamma_N(T,\pi)$ to be non-zero, the partitions $\pi_s$ must satisfy certain conditions.

\begin{notation} \label{not: rose}
  Given a subset $C \subseteq \cC$, we will write $T|_C$ for the subgraph of $T$ containing only edges with colors in $C$.
  For $c \in \cC$, we will write $T|_c$ as shorthand for $T|_{\set{c}}$.

  Let $\rho_s$ be the partition of $V(T)$ whose blocks are the vertex sets of the weakly connected components of $T|_{\cC\setminus \cC_s}$.
  In other words, $v \sim_{\rho_s} w$ if and only if $v$ and $w$ are connected by a (direction agnostic) path of edges colored by colors not in $\cC_s$.
\end{notation}

\begin{lemma} \label{lem:vanishingconditions}
  We have $\gamma_N(T,\pi) = 0$ unless $\pi_s \geq \rho_s$ for all $s \in \cS$.
\end{lemma}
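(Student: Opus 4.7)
The plan is to show, for each $s \in \cS$, that whenever $\pi_s$ fails to satisfy $\pi_s \geq \rho_s$ the summand in \eqref{eq:gamma} vanishes identically for every admissible $i$, so the whole sum is $0$.

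The key observation is the tensor structure of $\uu{X}_e$. Since
\[
\uu{X}_e = (\Sigma_{\chi(e)}^{(N)})^* X_e \Sigma_{\chi(e)}^{(N)} \otimes I_N^{\otimes \cS \setminus \cS_{\chi(e)}},
\]
it acts as the identity on every tensor factor indexed by $s \in \cS \setminus \cS_{\chi(e)}$. Hence for any $i : V(T) \to [N]^\cS$, the matrix entry factors as
\[
(\uu{X}_e)_{i(e_+),i(e_-)} = (\widetilde{X}_e)_{i|_{\cS_{\chi(e)}}(e_+),\, i|_{\cS_{\chi(e)}}(e_-)} \prod_{s \in \cS \setminus \cS_{\chi(e)}} \delta_{i_s(e_+),\, i_s(e_-)},
\]
where $\widetilde{X}_e = (\Sigma_{\chi(e)}^{(N)})^* X_e \Sigma_{\chi(e)}^{(N)}$. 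Since $\Lambda_v \in \Delta(M_N(\bC))$ contributes only the scalar factor $(\Lambda_v)_{i(v),i(v)}$, which imposes no constraint between distinct vertices, the only constraints between distinct vertices on a given string $s$ come from these $\delta$-factors on edges with $s \notin \cS_{\chi(e)}$.

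Now fix $s \in \cS$ and translate $s \notin \cS_{\chi(e)}$ into $\chi(e) \in \cC \setminus \cC_s$, i.e., $e$ is an edge of the subgraph $T|_{\cC \setminus \cC_s}$. Thus for the product in \eqref{eq:gamma} to be nonzero, we must have $i_s(e_+) = i_s(e_-)$ for every edge $e$ of $T|_{\cC \setminus \cC_s}$, which in turn forces $i_s$ to be constant on each weakly connected component of $T|_{\cC \setminus \cC_s}$. By definition of $\rho_s$ (Notation~\ref{not: rose}), this says exactly that $\ker(i_s) \geq \rho_s$. But the sum in \eqref{eq:gamma} only ranges over $i$ with $\ker(i_s) = \pi_s$, so if $\pi_s \not\geq \rho_s$ every term vanishes and $\gamma_N(T,\pi) = 0$.

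The argument has essentially no analytic content; the only subtlety is bookkeeping between the dual descriptions $s \in \cS_c \Longleftrightarrow c \in \cC_s$ and matching the identity tensor factors to the edges of the subgraph $T|_{\cC \setminus \cC_s}$. Once this translation is made, the statement reduces to the elementary fact that $\delta$-constraints propagate along connected components.
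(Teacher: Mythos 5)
Your proof is correct and follows essentially the same route as the paper: both arguments rest on the observation that the tensor factorization of $\uu{X}_e$ forces $(\uu{X}_e)_{i(e_+),i(e_-)}$ to vanish unless $i_s(e_+) = i_s(e_-)$ for every $s \notin \cS_{\chi(e)}$, and then propagate these constraints along connected components of $T|_{\cC\setminus\cC_s}$ to conclude $\ker(i_s) \geq \rho_s$, hence $\pi_s \geq \rho_s$. Your write-up is slightly more explicit about the product of delta factors and about the $\cS_c \leftrightarrow \cC_s$ bookkeeping, but the underlying argument is identical to the paper's.
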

\begin{proof}
Let $s \in \cS$ and $e \in E(T)$ and suppose that $\chi(e) \not \in \cC_s$.
Recall that
\[
  \uu{X}_e = \Sigma_{\chi(e)}^* X_e \Sigma_{\chi(e)} \otimes I_N^{\otimes\cS\setminus\cS_{\chi(e)}};
\]
in particular, $(\uu{X}_e)_{i,j}$ will be zero if $i_s \neq j_s$.

It follows that if $i : V(T) \to [N]^{\cS}$ with $\ker(i_s) = \pi_s$ for each $s$ but $\gamma_N(T,\pi) \neq 0$, then $i(e_+)_s = i(e_-)_s$ whenever $\chi(e) \notin \cC_s$, and in particular $i(v) = i(w)$ whenever $v$ and $w$ are connected by edges colored with colors not in $\cC_s$, i.e., whenever $v \sim_{\rho_s} w$.

Hence, $\pi_s \geq \rho_s$ as desired.
\end{proof}

%N
We now wish to evaluate the remaining $\gamma_N(T,\pi)$ terms combinatorially, which we will accomplish in Proposition~\ref{prop:trafficevaluation}.
First, the number of terms in the summation to compute $\gamma_N(T,\pi)$ can be described as follows using basic combinatorics.

\begin{lemma} \label{lem:counting}
\[
\# \{i: V(T) \to [N]^{\cS}: \forall s, \ker(i_s) = \pi_s \}
= \prod_{s \in \cS} \frac{N!}{(N-\# \pi_s)!}
\leq N^{\sum_{s\in\cS}\#\pi_s}.
\]
\end{lemma}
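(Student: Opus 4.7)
The plan is to decouple the count across the strings $s \in \cS$ and then reduce to the standard count of functions with prescribed kernel. Specifying a function $i : V(T) \to [N]^{\cS}$ is the same as specifying a family of functions $(i_s)_{s \in \cS}$ with $i_s : V(T) \to [N]$, and the constraint $\ker(i_s) = \pi_s$ for each $s$ only involves the $s$-th coordinate. Thus the set being counted is in bijection with $\prod_{s \in \cS} \{i_s : V(T) \to [N] : \ker(i_s) = \pi_s\}$, so it suffices to evaluate each factor separately and multiply.

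For a fixed $s$, a function $i_s : V(T) \to [N]$ with $\ker(i_s) = \pi_s$ is precisely one that factors as $V(T) \twoheadrightarrow V(T)/\pi_s \hookrightarrow [N]$, where the second map is an injection from the set of blocks of $\pi_s$ into $[N]$. The number of such injections is $N(N-1)\cdots(N - \#\pi_s + 1) = N!/(N - \#\pi_s)!$, which gives the equality
\[
\#\{i : V(T) \to [N]^{\cS} : \forall s,\ \ker(i_s) = \pi_s\} = \prod_{s \in \cS} \frac{N!}{(N-\#\pi_s)!}.
\]

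The inequality then follows from the crude bound $N!/(N-k)! = N(N-1)\cdots(N-k+1) \leq N^k$ applied with $k = \#\pi_s$ and multiplied over $s$. The only real ``obstacle'' is to notice that the constraints across different strings are independent, which is immediate once one remembers that the string index of an element of $[N]^{\cS}$ is recorded in a separate coordinate from the others; no step here is difficult.
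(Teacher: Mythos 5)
Your proof is correct and supplies exactly the elementary argument the paper leaves implicit (the lemma is stated without proof, attributed to ``basic combinatorics''). The decoupling over $s \in \cS$ followed by counting injections from blocks of $\pi_s$ into $[N]$ is the standard and intended route, and the final inequality is the obvious crude bound.
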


In order to state Proposition \ref{prop:trafficevaluation}, we introduce a few more pieces of notation.

\begin{notation}\label{not:lambdaN}
Let
\[
\lambda_N(T,\pi) = \frac{1}{N^{\sum_{s \in \cS} \# \pi_s}} \sum_{\substack{i: V(T) \to [N]^{\mathcal{S}} \\ \forall s \in \mathcal{S}, \ker(i_s) = \pi_s }}
\prod_{v \in V(T)} (\Lambda_v)_{i(v),i(v)}.
\]
Note that if $\norm{\Lambda_v} \leq R$ for all $v$, then $|\lambda_N(T,\pi)| \leq R^{\# V}$ because of Lemma \ref{lem:counting}.
\end{notation}

\begin{notation} \label{not:omegapic}
  With $\pi \in \cP(V(T))^{\cS}$ and $c \in \cC$, we let $\omegapic$ be the partition
  \[
    \omegapic = \bigwedge_{s \in \cS_c} \pi_s.
  \]
  Let us denote $\Tpic = (T|_c)/\omegapic$, which is a (random) \tdg{}.
\end{notation}

Notice that although $\Tpic$ is random, its (injective) trace is deterministic as the only remaining edges are labelled by matrices conjugated by the same random permutation.

Moreover, since the non-scalar entries of $\uu{X}_e$ occur on the strings in $\cS_c$, if edges $e, f$ have common color then $e_+ \sim_{\pi_c} f_+$ precisely records the condition that the same (relevant portion of the) index will be used for both $e$ and $f$.

\begin{prop} \label{prop:trafficevaluation}
Assume $T$ is connected.  Let $\pi \in \mathcal{P}(V(T))^{\cS}$ satisfy $\pi_s \geq \rho_s$ for all $s \in \cS$.  Then
\begin{align*}
  \bE\gamma_N(T,\pi) &= N^{\sum_{s \in \cS} (\# \pi_s - 1)} \lambda_N(T,\pi) \prod_{c \in \mathcal{C}} \left( \frac{(N^{\# \cS_c} - \# V(\Tpic))!}{(N^{\# \cS_c})!} N^{\# \cS_c \# \Comp(\Tpic)} \tau_{N^{\#\cS_c}}^\circ(\Tpic) \right) \\
&= \left(1 + O(N^{-1})\right) N^{\sum_{s \in \cS} (\# \pi_s - 1) - \sum_{c \in \cC} \# \cS_c \# V(\Tpic)} \lambda_N(T,\pi) \prod_{c \in \cC} N^{\# \cS_c \# \Comp(\Tpic)} \tau_{N^{\# \cS_c}}^\circ(\Tpic)
\end{align*}
\end{prop}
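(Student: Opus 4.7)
The plan is to compute $\bE\gamma_N(T,\pi)$ by exploiting the independence of the random permutations $\{\sigma_c\}$ and reducing each color's contribution to an injective-trace quantity on $\Tpic$.

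\textbf{Step 1: unpacking entries.} I would first expand $(\uu{X}_e)_{i(e_+),i(e_-)}$ using the definition of $\uu{X}_e$. For $e$ with $\chi(e) = c$, the tensor with $I_N^{\otimes \cS \setminus \cS_c}$ forces Kronecker deltas $\prod_{s \notin \cS_c}\delta_{i_s(e_+), i_s(e_-)}$, while the $\Sigma_c^* X_e \Sigma_c$ factor contributes $(X_e)_{\sigma_c(i_{|\cS_c}(e_+)),\sigma_c(i_{|\cS_c}(e_-))}$. Because $\pi_s\ge\rho_s$ by hypothesis, these Kronecker deltas are automatically satisfied for every $i$ in the sum defining $\gamma_N(T,\pi)$: for any $s\notin\cS_c$, the endpoints of the $c$-colored edge $e$ lie in the same $\rho_s$-block and hence in the same $\pi_s$-block.

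\textbf{Step 2: per-color expectation.} Since the permutations $\{\sigma_c : c\in\cC\}$ are independent, the expectation of the edge-product factors as a product over colors. For fixed $i$ with kernel data $\pi$, by definition of $\omegapic = \bigwedge_{s\in \cS_c}\pi_s$, the restriction $i_{|\cS_c}$ is constant on each $\omegapic$-block and distinct across different blocks, so it descends to an injection $\tilde i : V(\Tpic) \hookrightarrow [N]^{\cS_c}$. By invariance of the uniform measure on $\Sym([N]^{\cS_c})$ under left multiplication, $\sigma_c\circ \tilde i$ is uniformly distributed over all injections $V(\Tpic)\hookrightarrow [N]^{\cS_c}$, of which there are $(N^{\#\cS_c})!/(N^{\#\cS_c}-\#V(\Tpic))!$. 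Averaging gives
\[
\bE\prod_{e:\chi(e)=c}(X_e)_{\sigma_c(\tilde i(e_+)),\sigma_c(\tilde i(e_-))} = \frac{(N^{\#\cS_c}-\#V(\Tpic))!}{(N^{\#\cS_c})!}\sum_{j : V(\Tpic)\hookrightarrow [N]^{\cS_c}}\prod_{e\in E(\Tpic)}(X_e)_{j(e_+),j(e_-)},
\]
and the inner sum equals $N^{\#\cS_c\cdot\#\Comp(\Tpic)}\tau^\circ_{N^{\#\cS_c}}(\Tpic)$ by Definition~\ref{defn:testtrace} applied to $\Tpic$ viewed over $\bigotimes_{\cS_c}M_N(\bC)\cong M_{N^{\#\cS_c}}(\bC)$. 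The crucial point is that this expression does not depend on the specific choice of $i$.

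\textbf{Step 3: separating the diagonal contribution.} Because the expectation in Step 2 is $i$-independent, it factors out of the summation in \eqref{eq:gamma}, leaving $\frac{1}{N^{\#\cS}}\sum_i\prod_v(\Lambda_v)_{i(v),i(v)}$, which by Notation~\ref{not:lambdaN} equals $N^{\sum_s(\#\pi_s-1)}\lambda_N(T,\pi)$. Multiplying yields the first claimed identity. The second identity then follows from the elementary asymptotic $(M-k)!/M!=M^{-k}(1+O(M^{-1}))$ applied with $M = N^{\#\cS_c}$ (valid since $\#\cS_c\ge 1$ by assumption), aggregated over the finite set $\cC$.

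The main obstacle is the bookkeeping in Step 2: one must carefully verify that $\sigma_c\circ\tilde i$ is genuinely uniform over injections, and that the resulting sum over injections lines up exactly with the injective trace $\tau^\circ_{N^{\#\cS_c}}(\Tpic)$. This hinges on the precise compatibility between the quotient $T|_c \mapsto (T|_c)/\omegapic$, the restriction of indices from $\cS$ to $\cS_c$, and the fact that $\omegapic$ exactly records which vertices $v$ yield the same value of $i_{|\cS_c}$. Once this combinatorial bookkeeping is in place the rest is mechanical.
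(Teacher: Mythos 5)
Your argument is correct and matches the paper's in all essentials: the paper expands $\Sigma_c^* X_e \Sigma_c$ into a sum with indicator functions, observes that only terms determined by an injection $\ell_c \colon V(\Tpic) \hookrightarrow [N]^{\#\cS_c}$ survive, and then computes the probability that the indicators hold simultaneously, whereas you invoke left-invariance of the uniform measure on $\Sym([N]^{\cS_c})$ to observe that $\sigma_c \circ \tilde{i}$ is a uniform random injection and average directly over it. Both routes produce the identical per-colour factor $\frac{(N^{\#\cS_c}-\#V(\Tpic))!}{(N^{\#\cS_c})!}\,N^{\#\cS_c\,\#\Comp(\Tpic)}\tau^\circ_{N^{\#\cS_c}}(\Tpic)$, and the subsequent factorization out of the $i$-sum into $N^{\sum_s(\#\pi_s-1)}\lambda_N(T,\pi)$ and the Stirling-type asymptotic are handled the same way.
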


\begin{proof}
Recall that $\uu{X}_e = (\Sigma_{\chi(e)}^t X_e \Sigma_{\chi(e)}) \otimes I_N^{\otimes\cS \setminus \cS_c}$.
Fix $i: V(T) \to [N]^{\cS}$ such that $\ker(i_s) = \pi_s \geq \rho_s$.
Let us express the $i(e_+)$, $i(e_-)$ entry of $\uu{X}_e$ through matrix multiplication.
Fixing $e$ and letting $c = \chi(e)$, we have
\begin{align*}
  (\uu{X}_e)_{i(e_+),i(e_-)} &= [(\Sigma_c^* X_e \Sigma_c) \otimes I_N^{\otimes\cS \setminus \cS_c}]_{i(e_+), i(e_-)} \\
	&= (\Sigma_c^* X_e \Sigma_c)_{i(e_+)|_{\cS_c}, i(e_-)|_{\cS_c}}
\end{align*}
since by assumption $i(e_+)_s = i(e_-)_s$ for $s \in \cS \setminus \cS_c$.
Then
\begin{align}
  (\Sigma_c^t X_e \Sigma_c)_{i(e_+)|_{\cS_c}, i(e_-)|_{\cS_c}} &= \sum_{j,k \in [N]^{\cS_c}} (\Sigma_c)_{j,i(e_+)} (X_e)_{j,k} (\Sigma_c)_{k,i(e_-)} \nonumber\\ &= \sum_{j,k \in [N]^{\cS_c}} \mathbf{1}_{\sigma_c(i(e_+)|_{\cS_c}) = j} \mathbf{1}_{\sigma_c(i(e_-)|_{\cS_c}) = k} (X_e)_{j,k},
\label{eq:summations}
\end{align}
where $\sigma_c$ is the permutation whose matrix is $\Sigma_c$.
To compute $\gamma_N(T,\pi)$ from \eqref{eq:gamma}, we must take the product of this expression over all $e \in E(T)$; expanding the product would correspond to a choice of $j, k \in [N]^{\cS_c}$ for each edge $e$.
Overall we sum over functions $j_c, k_c: \chi^{-1}(c) \to [N]^{\cS_c}$ for each $c \in \mathcal{C}$.

%In order for the indicator functions in \eqref{eq:summations} not to vanish, we must have some consistency between $j_c$ and $k_c$.
If the indicator functions in \eqref{eq:summations} do not vanish, we conclude that for each $e \in E(T)$ with $c = \chi(e)$, one has $\sigma_c(i(e_+)|_{\cS_c}) = j_c(e)$ and $\sigma_c(i(e_-)|_{\cS_c}) = k_c(e)$ for each edge $e$.
This, in turn, gives us the following equivalences:
for any edges $e, f \in E(T)$ with a common color $c$,
\begin{itemize}
  \item $e_+\sim_{\omegapic}f_+$ if and only if $j_c(e) = j_c(f)$ (as both equal $\sigma_c(i(e_+)|_{\cS_c}) = \sigma_c(i(f_+)|_{\cS_c})$);
  \item $e_+\sim_{\omegapic}f_-$ if and only if $j_c(e) = k_c(f)$ (as both equal $\sigma_c(i(e_+)|_{\cS_c}) = \sigma_c(i(f_-)|_{\cS_c})$); and
  \item $e_-\sim_{\omegapic}f_-$ if and only if $k_c(e) = k_c(f)$ (as both equal $\sigma_c(i(e_-)|_{\cS_c}) = \sigma_c(i(f_-)|_{\cS_c})$).
\end{itemize}

The foregoing implies that we only need to sum over functions $j_c$ and $k_c$ that can be expressed in terms of some injective function $\ell_c: V(\Tpic) \to [N]^{\cS_c}$ through the relations
\begin{align*}
	j_c(e) &= \ell_c([e_+]_{\pi_c}) & k_c(e) &= \ell_c([e_-]_{\pi_c}).
\end{align*}
%Furthermore, because $\ell_c$ is obtained by applying 

We thus obtain
\begin{multline*}
  \prod_{e \in E(T)} (\uu{X}_e)_{i(e_+),i(e_-)} \\
	= \prod_{c \in \mathcal{C}} \left( \sum_{\ell_c: V(\Tpic) \hookrightarrow [N]^{\cS_c}}  \prod_{e \in E(\Tpic)}  \mathbf{1}_{\sigma_c(i(e_+)|_{\cS_c}) = \ell_c([e_+]_{\pi_c})} \mathbf{1}_{\sigma_c(i(e_-)|_{\cS_c})=\ell_c([e_-]_{\pi_c})} (X_e)_{\ell_c([e_+]_{\pi_c}),\ell_c([e_-]_{\pi_c})} \right). \end{multline*}
Now let us compute the expectation over the random choice of permutations.  The only terms that depend on the permutation are the indicator functions, and
\begin{multline*}
\mathbb{E} \left[ \prod_{c \in \cC} \prod_{e \in E(\Tpic)}  \mathbf{1}_{\sigma_c(i(e_+)|_{\cS_c}) = \ell_c([e_+]_{\pi_c})} \mathbf{1}_{\sigma_c(i(e_-)|_{\cS_c})=\ell_c([e_-]_{\pi_c})} \right] \\
= \mathbb{P} \left( \forall c \in \cC, \forall e \in E(\Tpic), \sigma_c(i(e_+)|_{\cS_c}) = \ell_c([e_+]_{\pi_c}) \text{ and }\sigma_c(i(e_-)|_{\cS_c}) = \ell_c([e_-]_{\pi_c}) \right).
\end{multline*}
Because the permutations for different colors are independent, this equals
\[
\prod_{c \in \mathcal{C}} \mathbb{P} \left( \forall e \in E(\Tpic), \sigma_c(i(e_+)|_{\cS_c}) \\ = \ell_c([e_+]_{\pi_c}) \text{ and }\sigma_c(i(e_-)|_{\cS_c}) = \ell_c([e_-]_{\pi_c}) \right).
\]
Fix $c \in \cC$, and let us compute the probability for this $c$.
Since $\ell_c$ is injective, it takes $\# V(\Tpic)$ distinct values in $[N]^{\cS_c}$, and correspondingly the entries of $i|_{\cS_c}$ relevant to the computation for $c$ take $\# V(\Tpic)$ distinct values.
Hence, we need to compute the probability that a uniformly random permutation of $N^{\# \cS_c}$ elements sends a prescribed list of $\# V(\Tpic)$ of those elements to another prescribed list of $\# V(\Tpic)$ in order.
This evaluates to
\[
\mathbb{P} \left( \forall e \in E(\Tpic), \sigma_c(i(e_+)|_{\cS_c}) = \ell_c([e_+]_{\pi_c}) \text{ and }\sigma_c(i(e_-)|_{\cS_c}) = \ell_c([e_-]_{\pi_c}) \right) = \frac{(N^{\# \cS_c} - \# V(\Tpic))!}{(N^{\# \cS_c})!}.
\]
Therefore,
\begin{align*}
  \mathbb{E} \prod_{e \in E(T)} (\uu{X}_e)_{i(e_+),i(e_-)}
	&= \prod_{c \in \mathcal{C}} \left( \frac{(N^{\# \cS_c} - \# V(\Tpic))!}{(N^{\# \cS_c})!} \sum_{\ell_c: V(\Tpic) \hookrightarrow [N]^{\cS_c}} \prod_{e \in E(\Tpic)} (X_e)_{\ell_c([e_+]_{\pi_c}), \ell_c([e_-]_{\pi_c})} \right) \\
	&= \prod_{c \in \mathcal{C}} \left( \frac{(N^{\# \cS_c} - \# V(\Tpic))!}{(N^{\# \cS_c})!} N^{\# \cS_c \# \Comp(\Tpic)} \tau_{N^{\# \cS_c}}^\circ(\Tpic) \right).
\end{align*}
Hence
\begin{align*}
  \mathbb{E} \gamma_N(T,\pi) &= \frac{1}{N^{\# \mathcal{S}}}
  \sum_{\substack{i: V(T) \to [N]^{\mathcal{S}} \\ \forall s \in \mathcal{S}, \ker(i_s) = \pi_s }}
  \prod_{v \in V(T)} (\Lambda_v)_{i(v),i(v)}
  \mathbb{E} \left[ \prod_{e \in E(T)} (\uu{X}_e)_{i(e_+),i(e_-)} \right] \\
  &= \frac{1}{N^{\# \mathcal{S}}}
  \sum_{\substack{i: V(T) \to [N]^{\mathcal{S}} \\ \forall s \in \mathcal{S}, \ker(i_s) = \pi_s }}
  \prod_{v \in V(T)} (\Lambda_v)_{i(v),i(v)} \prod_{c \in \mathcal{C}} \left( \frac{(N^{\# \cS_c} - \# V(\Tpic))!}{(N^{\# \cS_c})!} N^{\# \cS_c \# \Comp(\Tpic)} \tau_{N^{\# \cS_c}}^\circ(\Tpic) \right) \\
  &= \frac{N^{\sum_{s \in \cS} \# \pi_s}}{N^{\# \cS}} \lambda_N(T,\pi) \prod_{c \in \mathcal{C}} \left( \frac{(N^{\# \cS_c} - \# V(\Tpic))!}{(N^{\# \cS_c})!} N^{\# \cS_c \# \Comp(\Tpic)} \tau_{N^{\# \cS_c}}^\circ(\Tpic) \right).
\end{align*}
After observing that $\sum_{s \in \cS} \# \pi_s - \# \cS = \sum_{s \in \cS} (\# \pi_s - 1)$, we obtain the first formula asserted in the proposition.  The second follows using the fact that
\[
  \frac{(N^{\# \cS_c} - \# V(\Tpic))!}{(N^{\# \cS_c})!} = (1 + O(1/N)) N^{-\# \cS_c \# V(\Tpic)}
\]
and then combining the terms in the exponent.
\end{proof}

By combining \eqref{eq:kernelexpansion}, Lemma \ref{lem:vanishingconditions}, and Proposition \ref{prop:trafficevaluation}, we obtain the following result.

\begin{cor} \label{cor:trafficevaluation}
\begin{align*}
  \mathbb{E} \tau_{N^{\# \cS}}(\mathring{T}) = \sum_{\substack{\pi \in \mathcal{P}(V(T))^{\cS} \\ \forall s \in \cS, \pi_s \geq \rho_s}}& N^{\sum_{s \in \cS} (\# \pi_s - 1)} \lambda_N(T,\pi)\\
  &\prod_{c \in \mathcal{C}} \left( \frac{(N^{\# \cS_c} - \# V(\Tpic))!}{(N^{\# \cS_c})!} N^{\# \cS_c \# \Comp(\Tpic)} \tau_N^\circ(\Tpic) \right) \\
  %&= \sum_{\substack{\pi \in \mathcal{P}(V(T))^{\cS} \\ \forall s \in \cS, \pi_s \geq \rho_s}} \left(1 + O(N^{-1})\right) N^{\sum_{s \in \cS} (\# \pi_s - 1) - \sum_{c \in \cC} \# \cS_c \# V(\Tpic)} \lambda_N(T,\pi) \prod_{c \in \cC} N^{\# \cS_c \# \Comp(\Tpic)}\tau_{N^{\# \cS_c}}^\circ(\Tpic) \\
  = \sum_{\substack{\pi \in \mathcal{P}(V(T))^{\cS} \\ \forall s \in \cS, \pi_s \geq \rho_s}}& \left(1 + O(N^{-1})\right) N^{\sum_{s \in \cS} (\# \pi_s - 1) } \lambda_N(T,\pi)\\
    &\prod_{c \in \cC} N^{\# \cS_c \paren{\# \Comp(\Tpic) -   \# V(\Tpic)}}\tau_{N^{\# \cS_c}}^\circ(\Tpic) \\
\end{align*}
\end{cor}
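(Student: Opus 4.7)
The corollary is a direct packaging of three results that have already been established, so the plan is to assemble rather than to prove from scratch. I would begin from equation \eqref{eq:kernelexpansion}, which decomposes $\tau_{N^{\#\cS}}(\mathring{T})$ as a finite sum over tuples $\pi \in \mathcal{P}(V(T))^{\cS}$ of the quantities $\gamma_N(T,\pi)$. Since this is a sum of finitely many random variables (the index set $\mathcal{P}(V(T))^{\cS}$ does not depend on $N$), I can freely exchange expectation with the sum to obtain
\[
\mathbb{E}\,\tau_{N^{\#\cS}}(\mathring{T}) = \sum_{\pi \in \mathcal{P}(V(T))^{\cS}} \mathbb{E}\,\gamma_N(T,\pi).
\]

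Next I would apply Lemma~\ref{lem:vanishingconditions}, which says that $\gamma_N(T,\pi) = 0$ (deterministically, not merely in expectation) whenever $\pi_s \not\geq \rho_s$ for some $s \in \cS$. This restricts the sum to the tuples $\pi$ satisfying $\pi_s \geq \rho_s$ for all $s$. For each such surviving $\pi$, I would then substitute the explicit formula for $\mathbb{E}\,\gamma_N(T,\pi)$ provided by Proposition~\ref{prop:trafficevaluation}. The first displayed identity in the corollary arises from the first (exact) formula in that proposition, while the second identity is obtained from its asymptotic form: replacing the factor $\prod_c (N^{\#\cS_c} - \#V(\Tpic))!/(N^{\#\cS_c})!$ by $(1+O(N^{-1})) \, N^{-\sum_c \#\cS_c \#V(\Tpic)}$ and then combining the resulting $N^{-\#\cS_c \#V(\Tpic)}$ with the pre-existing $N^{\#\cS_c \#\Comp(\Tpic)}$ gives the stated exponent $\#\cS_c(\#\Comp(\Tpic) - \#V(\Tpic))$.

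There is no serious obstacle here; the only minor point worth checking is that the $O(N^{-1})$ error terms from the individual summands can be legitimately factored outside the sum as a single $(1+O(N^{-1}))$. This is immediate because the index set $\mathcal{P}(V(T))^{\cS}$ has cardinality depending only on $\#V(T)$ and $\#\cS$, while $\lambda_N(T,\pi)$ is bounded uniformly in $N$ by Notation~\ref{not:lambdaN} (using the standing operator-norm bounds on the $\Lambda_v$), and $\tau_{N^{\#\cS_c}}^\circ(\Tpic)$ is likewise bounded in terms of the uniform bounds on the $X_e$. Hence the number of summands and each summand's prefactor are $O(1)$, allowing the pointwise $(1+O(N^{-1}))$ to be pulled out globally. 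The conclusion follows.
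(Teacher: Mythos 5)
Your proposal is correct and follows exactly the paper's argument: the corollary is derived by combining equation \eqref{eq:kernelexpansion}, Lemma~\ref{lem:vanishingconditions}, and Proposition~\ref{prop:trafficevaluation}, with the expectation passed through the finite sum by linearity. Your closing remark about factoring the $O(N^{-1})$ out of the sum is sound but not actually needed here, since the corollary as stated keeps the $(1+O(N^{-1}))$ factor inside the summation.
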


\subsection{Growth rates and graph theory}\label{subsec: growth graphs}

In this section, we continue with the same setup as \S \ref{subsec: combinatorial graph moments}.
In particular, throughout the entire section we only concern ourselves with partitions $\pi$ satisfying $\pi_s \geq \rho_s$ for each $s \in \cS$, as these are the only ones with corresponding terms in the sum in Corollary~\ref{cor:trafficevaluation}.
In order to determine which of these terms contribute in the limit as $N \to \infty$, we will apply an estimate of Mingo and Speicher \cite{MS2012} that estimates traffic moments in terms of the operator norms of the corresponding matrices.  The power of $N$ appearing in the estimate is defined in terms of two-edge-connected components.

\begin{itemize}
  \item A \emph{cut edge} in a (di)graph $G$ is an edge $e$ such that $G \setminus e$ has one more (weakly) connected component than $G$.
  \item A (di)graph is \emph{(weakly) two-edge-connected} if it is (weakly) connected and has no cut-edges.
  %This is equivalent to the condition that for every pair of vertices $v$ and $w$, there exist two (direction agnostic) paths from $v$ to $w$ that are edge-disjoint.
  \item The \emph{(weakly) two-edge connected components} of $G$ are the maximal two-edge-connected sub-(di)graphs of $G$.
  \item For a (di)graph $G$, let $\mathcal{F}(G)$ be the undirected graph with vertices given by the (weakly) two-edge-connected components of $G$, with an edge between two two-edge-connected components if and only if there is a cut edge in $G$ between those two components.
    Note that $\mathcal{F}(G)$ is a forest, with one connected component corresponding to each connected component of $G$.
  \item For a (di)graph $G$, $\mathfrak{f}(G)$ denotes the total number of leaves in $\mathcal{F}(G)$, where an isolated vertex in $\mathcal{F}(G)$ counts as two leaves.
\end{itemize}

\begin{remark}
It can be checked that the two-edge-connected components of a (di)graph $G$ are precisely the connected components of the graph obtained by removing all the cut edges from $G$.
In particular, two vertices $v$ and $w$ are in the same two-edge-connected component if and only if they are joined by a path in $G$ containing no cut edges.
\end{remark}

\begin{thm}[{\cite[Theorem 6]{MS2012}}]
  For a \tdg{} $T$ with labels $A_e \in M_N(\bC)$, we have
  \[
    N^{\# \Comp(T)} |\tau_N(T)| \leq C_T N^{\mathfrak{f}(T)/2} \prod_{e \in E} \norm{A_e},
  \]
  where $C_T$ is a constant that only depends on the underlying graph for $T$.
\end{thm}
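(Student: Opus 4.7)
The plan is to prove this sharp bound by induction on the cut-edge structure of $T$, using Cauchy--Schwarz to peel off one cut edge at a time. Since $\tau_N$, $\#\Comp$, and $\mathfrak{f}$ all respect disjoint unions, we may reduce to the case that $T$ is connected.

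For the base case, suppose $T$ is 2-edge-connected; then $\mathcal{F}(T)$ is a single isolated vertex, $\mathfrak{f}(T)=2$, and the target bound is $C_T\,N\prod_e\|A_e\|$. This would be established using an ear decomposition: start with any cycle $C_0\subseteq T$, whose indexed sum is $\tr(A_{e_1}\cdots A_{e_k})\le N\prod_i\|A_{e_i}\|$. Each subsequent open ear $v_0\to v_1\to\cdots\to v_m$ with endpoints already in the constructed subgraph contracts under summation over its interior vertices to a single matrix entry $(A_{e_m}\cdots A_{e_1})_{i(v_m),i(v_0)}$ of operator norm at most $\prod_i\|A_{e_i}\|$. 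No ear adds an $N$ factor, so the overall bound is $N\prod_e\|A_e\|$ up to a combinatorial constant.

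For the inductive step, pick a cut edge $e=(u,w)$ of $T$ incident to a leaf block of $\mathcal{F}(T)$ on the $u$ side, and split $T = T_1 \cup_e T_2$ with $u\in T_1$ and $w\in T_2$. The sum decomposes as
\[
\sum_{i:V(T)\to[N]}\prod_{e'}(A_{e'})_{\cdots}
\;=\; \sum_{i(u),\,i(w)} F_1(i(u))\,(A_e)_{i(u),i(w)}\,F_2(i(w)),
\]
where $F_j$ is the partial sum over indices in $T_j$ with the endpoint of $e$ held fixed. Cauchy--Schwarz bounds this by $\|A_e\|\cdot\|F_1\|_2\cdot\|F_2\|_2$, and each $\|F_j\|_2^2$ is itself a trace sum, namely over the doubled \tdg{} $\tilde T_j$ formed from two copies of $T_j$ identified at the shared endpoint of $e$. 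Since $T_1$ is 2-edge-connected, so is $\tilde T_1$, and the base case applies to it; for $\tilde T_2$ we apply the inductive hypothesis. Combining yields the desired estimate provided one verifies the combinatorial identity
\[
\mathfrak{f}(\tilde T_1) + \mathfrak{f}(\tilde T_2) \le 2\mathfrak{f}(T).
\]

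The main obstacle is this combinatorial identity together with selecting the right induction measure. The doubling operation can merge 2-edge-connected blocks and alter the block forest in non-obvious ways, so proving the leaf-count estimate requires a case analysis at the endpoints of $e$; peeling at a leaf block is what controls how much the forest grows. For the induction to terminate, one needs a complexity measure on $T$ that strictly decreases under $T\mapsto \tilde T_2$ when $e$ is chosen as above---this appears to require a finer invariant than the number of blocks or edges alone, such as a lexicographic measure combining the depth profile of $\mathcal{F}(T)$ with the total number of vertices. Once the bookkeeping is in place, the constant $C_T$ is simply the product of the combinatorial factors produced at each step of the induction, depending only on the underlying graph of $T$.
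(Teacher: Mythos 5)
The paper does not prove this result; it is quoted verbatim from \cite[Theorem 6]{MS2012}, so there is no internal proof to compare against. What follows is an assessment of your reconstruction on its own terms.

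Your overall strategy --- peel cut edges by Cauchy--Schwarz, reducing to a doubled graph $\tilde T_2$ and a two-edge-connected piece $\tilde T_1$ --- is in the right spirit, and your key combinatorial inequality is in fact correct: a direct count shows $\mathfrak{f}(\tilde T_1)=2$ and, since $\mathcal{F}(\tilde T_2)$ is two copies of $\mathcal{F}(T_2)$ glued at the block of $w$ (which is never a leaf of $\mathcal{F}(\tilde T_2)$ unless $T_2$ is two-edge-connected), one gets $\mathfrak{f}(\tilde T_2)=2(\mathfrak{f}(T)-1)$, so $\mathfrak{f}(\tilde T_1)+\mathfrak{f}(\tilde T_2)=2\mathfrak{f}(T)$ with equality. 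So that is not the obstacle. The two genuine gaps are elsewhere.

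First, the base case. After summing out an ear's interior you indeed obtain the matrix entry $(A_{e_m}\cdots A_{e_1})_{i(v_m),i(v_0)}$, each bounded in modulus by $\prod\norm{A_{e_i}}$. But this entry still depends on $i(v_0),i(v_m)$, and the remaining sum is a new indexed graph sum in which that entry appears as a weight on a chord. You cannot simply replace the entry by its supremum and pull it out of the sum: that step would bound the remaining quantity by a sum of \emph{absolute values} of matrix entries along the base cycle, which is not $O(N\prod\norm{A_e})$ in general (e.g.\ for matrices of the form $N^{-1/2}\times$ unitary it grows like $N^{k/2}$). Equivalently, after contraction you are looking at expressions like $\tr\bigl((A_1\circ C)A_2\bigr)$ involving Hadamard products, and $\norm{A_1\circ C}$ is not controlled by $\norm{A_1}\norm{C}$. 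The two-edge-connected bound is true, but its proof (which is most of \cite{MS2012}) is considerably more delicate than an ear-by-ear factorization; it requires a careful choice of which one or two indices to estimate in Hilbert--Schmidt norm (each giving a factor $\sqrt N$) and which to estimate in sup norm, rather than treating all ears uniformly.

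Second, the termination of the induction, which you flag but do not resolve, is a genuine problem and not merely a bookkeeping chore. Passing from $T$ to $\tilde T_2$ \emph{increases} every natural measure one might want to induct on: the number of vertices and edges roughly doubles, the number of blocks goes from $m$ to $2m-3$, the number of cut edges from $k$ to $2(k-1)$, the diameter of $\mathcal{F}$ can nearly double, and $\mathfrak{f}(\tilde T_2)=2\mathfrak{f}(T)-2\geq\mathfrak{f}(T)$ as soon as $\mathfrak{f}(T)\geq 2$. There is no evident lexicographic or depth-profile invariant that decreases under $T\mapsto\tilde T_2$, and Mingo and Speicher do not in fact run this recursion; their argument detaches leaf blocks without re-entering the full statement on the doubled graph. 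As written, your proof does not terminate, and repairing it would require a substantively different induction scheme rather than a finer complexity measure.
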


One can give a similar bound for the injective trace.  This follows from the M\"obius inversion expressing $\tau_N^0$ in terms of $\tau_N$ as noted in \cite[Lemma 4.14 (4.9 of the arXiv version)]{ACDGM2021}.

\begin{cor}
\label{cor: mingo speicher injective}
  For a \tdg{} $T$ with labels $A_e \in M_N(\bC)$, we have
\[
N^{\# \Comp(T)} |\tau_N^\circ(T)| \leq C_T' N^{\mathfrak{f}(T)/2} \prod_{e \in E} \norm{A_e},
\]
where $C_T'$ is a constant that only depends on the underlying graph for $T$.
\end{cor}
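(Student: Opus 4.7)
The plan is to deduce the bound on $\tau_N^\circ(T)$ from the Mingo--Speicher bound on $\tau_N$ via M\"obius inversion over the partition lattice of $V(T)$, exactly as indicated by the reference to \cite[Lemma~4.14]{ACDGM2021}. First, I would establish the standard identity
\[
\tau_N(T) \;=\; \sum_{\pi \in \cP(V(T))} N^{\#\Comp(T/\pi) - \#\Comp(T)}\,\tau_N^\circ(T/\pi)
\]
by splitting the sum defining $\tau_N(T)$ according to the kernel partition $\ker(i) \in \cP(V(T))$ of an indexing map $i : V(T) \to [N]$, and observing that the terms with $\ker(i) = \pi$ biject with injective maps out of $V(T/\pi)$ and so produce exactly the injective trace on $T/\pi$. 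Since this relation is triangular with respect to the refinement order on $\cP(V(T))$, M\"obius inversion gives
\[
\tau_N^\circ(T) \;=\; \sum_{\pi \in \cP(V(T))} c_{V(T),\pi}\, N^{\#\Comp(T/\pi) - \#\Comp(T)}\,\tau_N(T/\pi),
\]
where the coefficients $c_{V(T),\pi}$ are $\mu$-values in the partition lattice and depend only on the set $V(T)$.

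Next, I would apply the Mingo--Speicher bound \cite[Theorem~6]{MS2012} to each of the finitely many traces $\tau_N(T/\pi)$ appearing on the right-hand side, obtaining
\[
N^{\#\Comp(T/\pi)}\,|\tau_N(T/\pi)| \;\leq\; C_{T/\pi}\, N^{\mathfrak{f}(T/\pi)/2} \prod_{e \in E(T)} \norm{A_e}.
\]
Substituting this into the M\"obius formula and taking absolute values then yields a bound of the form
\[
N^{\#\Comp(T)}\,|\tau_N^\circ(T)| \;\leq\; \Bigl( \sum_{\pi} |c_{V(T),\pi}|\,C_{T/\pi} \Bigr)\, N^{\max_\pi \mathfrak{f}(T/\pi)/2} \prod_{e \in E(T)} \norm{A_e}.
\]
The underlying graph of each $T/\pi$ depends only on the underlying graph of $T$, so the prefactor can be absorbed into a single constant $C_T'$ depending only on the underlying graph of $T$.

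The main obstacle is the combinatorial inequality $\mathfrak{f}(T/\pi) \leq \mathfrak{f}(T)$ for every $\pi \in \cP(V(T))$, which is what allows the exponent in the final bound to be $\mathfrak{f}(T)/2$ rather than something larger. The intuition is that identifying vertices in $T$ can only increase connectivity: every cut edge of $T/\pi$ remains a cut edge of $T$, and two-edge-connected components of $T$ can only merge. Consequently the forest $\mathcal{F}(T/\pi)$ is obtained from $\mathcal{F}(T)$ by a sequence of vertex identifications and edge deletions, operations which do not increase the leaf count (with the convention that isolated vertices count as two leaves). I would prove this by reducing to the elementary case in which $\pi$ identifies a single pair of vertices $v, w$, and then checking case-by-case according to whether $v$ and $w$ lie in the same two-edge-connected component of $T$, in distinct components of a common weakly connected component, or in different weakly connected components entirely. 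Once this monotonicity is in hand, the corollary follows immediately from the preceding two steps.
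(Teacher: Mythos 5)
Your proof is correct and follows the same route as the paper: the paper's one-line justification cites exactly this M\"obius inversion from \cite[Lemma 4.14]{ACDGM2021}, after which one applies the Mingo--Speicher bound to each $\tau_N(T/\pi)$ and controls the resulting powers of $N$. The one thing worth tightening is the heuristic for the key inequality $\mathfrak{f}(T/\pi)\le\mathfrak{f}(T)$: the sentence ``vertex identifications and edge deletions do not increase the leaf count'' is not literally true for forests (deleting the middle edge of a path $a-b-c$ raises the count from $2$ to $4$); what actually happens is that an edge of $\mathcal{F}(T)$ disappears only when its two endpoints are identified, so the passage from $\mathcal{F}(T)$ to $\mathcal{F}(T/\pi)$ decomposes into contractions of subpaths within a tree and identifications of vertices in distinct trees, and it is these specific operations that do not increase $\mathfrak{f}$. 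Your proposed reduction to single-pair identifications with a three-way case split (same two-edge-connected component; same weak component but different two-edge-connected components; different weak components) establishes exactly this, so the issue is with the intuition sentence rather than the plan itself.
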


We can apply this theorem together with Proposition \ref{prop:trafficevaluation} to conclude the following.

\begin{cor} \label{cor: traffic growth bound}
Suppose that $\norm{\Lambda_v^{(N)}} \leq R$ and $\norm{X_e^{(N)}} \leq R$.  Then
\[
  |\mathbb{E} \gamma_N(T,\pi)| \leq C_T' R^{\# V(T) + \# E(T)} (1 + O(N^{-1})) N^{\sum_{s \in \cS} (\# \pi_s - 1) + \sum_{c \in \cC} \# \cS_c (\mathfrak{f}(\Tpic)/2 - \# V(\Tpic) )},
\]
where $C_T'$ is a constant that only depends on the underlying graph for $T$.
\end{cor}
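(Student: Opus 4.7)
The plan is a direct substitution: combine the exact asymptotic formula from Proposition \ref{prop:trafficevaluation} with the norm bound on $\lambda_N(T,\pi)$ from Notation \ref{not:lambdaN} and the Mingo--Speicher estimate of Corollary \ref{cor: mingo speicher injective} applied to each of the quotient \tdg{}s $\Tpic$. Starting from the second (asymptotic) form of the formula in Proposition \ref{prop:trafficevaluation},
\[
  |\bE\gamma_N(T,\pi)|
    \;\leq\; (1+O(N^{-1}))\, N^{\sum_{s\in\cS}(\#\pi_s-1) - \sum_{c\in\cC}\#\cS_c\,\#V(\Tpic)}\,|\lambda_N(T,\pi)|\,\prod_{c\in\cC} N^{\#\cS_c\,\#\Comp(\Tpic)}\,|\tau_{N^{\#\cS_c}}^\circ(\Tpic)|,
\]
I would estimate each of the three types of factors separately.

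First, Notation \ref{not:lambdaN} gives $|\lambda_N(T,\pi)|\leq R^{\#V(T)}$, since every value of $\Lambda_v$ appearing in the sum has norm at most $R$ and the normalization exactly cancels the cardinality count from Lemma \ref{lem:counting}. Second, for each colour $c\in\cC$ I would apply Corollary \ref{cor: mingo speicher injective} to the \tdg{} $\Tpic$, which lives over $M_{N^{\#\cS_c}}(\bC)$ and whose edges are labelled by the original matrices $X_e$. Using that theorem with matrix dimension $N^{\#\cS_c}$ in place of $N$ yields
\[
  N^{\#\cS_c\,\#\Comp(\Tpic)}\,|\tau_{N^{\#\cS_c}}^\circ(\Tpic)|
    \;\leq\; C'_{\Tpic}\, N^{\#\cS_c\,\mathfrak{f}(\Tpic)/2}\,\prod_{e\in E(\Tpic)}\|X_e\|.
\]

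Multiplying across colours and recalling that $\bigsqcup_{c\in\cC} E(\Tpic) = E(T)$ (the quotient by $\omegapic$ only identifies vertices, not edges) gives a product of norms bounded by $R^{\#E(T)}$, while the constants $C'_{\Tpic}$ multiply to a quantity that a priori depends on $T$ and $\pi$; but since for fixed $T$ there are only finitely many partitions $\pi\in\cP(V(T))^{\cS}$ to worry about, I can absorb this into a single constant $C_T'$ depending only on the underlying graph of $T$ by taking a maximum. Combining exponents,
\[
  \sum_{s\in\cS}(\#\pi_s-1)\;-\;\sum_{c\in\cC}\#\cS_c\,\#V(\Tpic)\;+\;\sum_{c\in\cC}\#\cS_c\,\mathfrak{f}(\Tpic)/2
    \;=\;\sum_{s\in\cS}(\#\pi_s-1)+\sum_{c\in\cC}\#\cS_c\bigl(\mathfrak{f}(\Tpic)/2-\#V(\Tpic)\bigr),
\]
which is precisely the exponent in the stated bound.

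There is no serious obstacle here; the argument is essentially bookkeeping. The only care required is remembering that $\Tpic$ is a test digraph over matrices of size $N^{\#\cS_c}$ rather than $N$, so that the Mingo--Speicher bound contributes $N^{\#\cS_c\,\mathfrak{f}(\Tpic)/2}$ and the cancelling factor is $N^{\#\cS_c\,\#\Comp(\Tpic)}$ rather than the more naive $N^{\#\Comp(\Tpic)}$. Once this is handled correctly, the exponents combine exactly as claimed.
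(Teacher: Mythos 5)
Your proposal is correct and follows essentially the same route as the paper: substitute the bound $|\lambda_N(T,\pi)|\leq R^{\#V(T)}$ and the Mingo--Speicher estimate for $\tau^\circ_{N^{\#\cS_c}}(\Tpic)$ into the second formula of Proposition~\ref{prop:trafficevaluation} and combine exponents. You are in fact slightly more careful than the paper at two points: you correctly keep the factor $\#\cS_c$ on $\mathfrak{f}(\Tpic)/2$ when applying Corollary~\ref{cor: mingo speicher injective} over $M_{N^{\#\cS_c}}(\bC)$ (the paper's intermediate display \eqref{eq:longday} appears to drop it, although the exponent in the final statement is stated correctly), and you explain why $C_T'$ can be taken to depend only on $T$ by maximizing over the finitely many $\pi\in\cP(V(T))^{\cS}$, a step the paper leaves implicit.
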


\begin{proof}
We already noted before that $|\lambda_N(T,\pi)| \leq R^{\# V(T)}$.  By Corollary \ref{cor: mingo speicher injective},
\begin{equation}
\label{eq:longday}
N^{\# \cS_c \# \Comp(\Tpic)} |\tau_{N^{\# \cS}}^\circ(\Tpic)| \leq C_T' R^{\# E(\Tpic)} N^{\#\cS_c\mathfrak{f}(\Tpic)/2}.
\end{equation}
We plug these estimates into Proposition \ref{prop:trafficevaluation} and combine the exponents.
\end{proof}

Next, our goal is to figure out what power of $N$ occurs in front of each term in the sum in Corollary \ref{cor:trafficevaluation}.  In particular, we want to show that
\[
\sum_{s \in \cS} (\# \pi_s - 1) + \sum_{c \in \cC} \# \cS_c (\mathfrak{f}(\Tpic)/2 - \# V(\Tpic) ) \leq 0,
\]
and characterize when equality is achieved.
For this argument, we need a few more auxiliary \dg{}s.

\begin{notation}\label{not:gpis}
  Let $\gps = (T|_{\cC_s})/\pi_s$ be the \dg{} obtained from $T$ by identifying the vertices according to $\pi_s$ (in particular, vertices are identified if they are connected by a path of non-$\cC_s$-colored edges since $\pi_s \geq \rho_s$), deleting all edges that are not $\cC_s$-colored.
  (Notice that all non-$\cC_s$-colored edges would be contracted to loops in $T/\pi_s$.)

  This is similar to the notation $\Tpic = (T|_c)/\pi_c$, except we are now dealing with a collection of colors present on one string rather than a single color.
\end{notation}

\begin{notation}\label{not:hsc}
  For $c \in \cC_s$, $\omegapic$ refines $\pi_s$ and therefore there is an induced \dg{} homomorphism $\Tpic \to T/\pi_s$.
  Since all the edges in $\Tpic$ are $c$-colored, the image of this map is in fact in the sub-\dg{} $\gps \subseteq T/\pi_s$.
  We write $\hsc : \Tpic \to \gps$ for this homomorphism, so that for $v \in V(T)$, $\hsc([v]_{\pi_c}) = [v]_{\pi_s}$.
\end{notation}

\begin{defn}\label{def:gcc}
Now define the \emph{graph of colored components} $\GCC(T,\pi,s)$ as follows.
$\GCC(T,\pi,s)$ will be a bipartite (multi)graph where the two vertex sets are $V(\gps)$ and $\bigsqcup_{c \in \cC_s} \Comp(\Tpic)$.
%The set of edges in $\GCC(T,\pi,s)$ between $v \in V(\gps)$ and $A \in \Comp(\Tpic)$ is defined to be $(\hsc)^{-1}(v) \cap V(A)$.
The set of edges in $\GCC(T,\pi,s)$ is defined to be
\[ \bigsqcup_{c \in \cC_s} V(\Tpic), \]
where $v \in V(\Tpic)$ is treated as an edge between the connected component of $\Tpic$ containing $v$ and $\hsc(v)$.
(Hence, multiple edges are allowed between the same pair of vertices.)
\end{defn}

The following lemma shows that for $s \in \cS$ and $c \in \cC_s$, walks in $\Tompic$ give rise to walks in $\GCC(T,\pi,s)$; this will be useful to us in the proofs that follow.
In particular, it shows that $\GCC(T,\pi,s)$ is connected or two-edge connected if $T$ is; it also allows us to build paths in $\GCC(T,\pi,s)$ which start or end at particular vertices and visit (or avoid) particular components.
The lemma holds under weaker but more technical hypotheses, but the version as stated suffices for our purposes.

\begin{lemma}
  \label{lem:inducedgccpath}
  Fix $s \in \cS$, let $(e_j)_{j=1}^m \in E(T)$ be a sequence of edges, and let
  \[\set{ j \in [m] \colon \chi(e_j) \in \cC_s } = \set{j_1 < j_2 < \ldots < j_n}.\]
  For each $i$, let $A_i$ be the connected component of $T_{\pi,\chi(e_i)}$ which contains $e_i$.

  Suppose that for some $c \in \cC_s$, $(e_j)_j$ forms a walk when viewed in $E(\Tompic)$.
  Then there is a walk in $\GCC(T,\pi,s)$ from $[(e_1)_-]_{\pi_s}$ to $[(e_m)_+]_{\pi_s}$ along the edges
  \[
    [(e_{j_1})_-]_{\pi_{\chi(e_{j_1})}},\
    [(e_{j_1})_+]_{\pi_{\chi(e_{j_1})}},\
    [(e_{j_2})_-]_{\pi_{\chi(e_{j_2})}},\
    [(e_{j_2})_+]_{\pi_{\chi(e_{j_2})}},\
    \ldots,\
    [(e_{j_n})_+]_{\pi_{\chi(e_{j_n})}},
  \]
  which visits precisely the vertices
  \begin{align}
  \label{eq: gcc path}
    [(e_1)_-]_{\pi_s} = [(e_{j_1})_-]_{\pi_s},\
    A_{j_1},\
    [(e_{j_1})_+]_{\pi_s} = [(e_{j_2})_-]_{\pi_s},\
    A_{j_2},\
    \ldots,\
    A_{j_n},\
    [(e_{j_n})_+]_{\pi_s} = [(e_{j_m})_+]_{\pi_s}.
\end{align}
\end{lemma}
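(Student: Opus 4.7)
The plan is to verify directly that the listed data forms a walk in $\GCC(T,\pi,s)$. This requires two checks: that each listed edge of $\GCC(T,\pi,s)$ actually has the claimed pair of endpoints, and that consecutive edges share a vertex (equivalently, that the claimed equalities $[(e_1)_-]_{\pi_s} = [(e_{j_1})_-]_{\pi_s}$, $[(e_{j_i})_+]_{\pi_s} = [(e_{j_{i+1}})_-]_{\pi_s}$ for $i<n$, and $[(e_{j_n})_+]_{\pi_s} = [(e_m)_+]_{\pi_s}$ hold).

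For the endpoint check, fix $i$ and set $c' := \chi(e_{j_i}) \in \cC_s$. The edge $e_{j_i}$ survives in $T_{\pi,c'}$ and there joins $[(e_{j_i})_-]_{\pi_{c'}}$ to $[(e_{j_i})_+]_{\pi_{c'}}$, so both of these vertices lie in the same connected component $A_{j_i}$. By Definition~\ref{def:gcc}, each vertex of $V(T_{\pi,c'})$, viewed as an edge of $\GCC(T,\pi,s)$, joins its own connected component to its image under $\hsc[c']$; applied to $[(e_{j_i})_-]_{\pi_{c'}}$ and $[(e_{j_i})_+]_{\pi_{c'}}$ this produces the endpoint pairs $(A_{j_i}, [(e_{j_i})_-]_{\pi_s})$ and $(A_{j_i}, [(e_{j_i})_+]_{\pi_s})$, which share $A_{j_i}$ exactly as claimed.

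For the string of equalities, I propose to slide a single argument along each maximal sub-walk $e_{j_i}, e_{j_i+1}, \ldots, e_{j_{i+1}}$ (and, analogously, along the initial segment $e_1,\dots,e_{j_1}$ and the terminal segment $e_{j_n},\dots,e_m$). The argument rests on two ingredients. First, because $c \in \cC_s$ we have $s \in \cS_c$, so $\omegapic = \bigwedge_{s' \in \cS_c} \pi_{s'} \leq \pi_s$; hence each walk coincidence $(e_k)_+ \sim_{\omegapic} (e_{k+1})_-$ automatically upgrades to $(e_k)_+ \sim_{\pi_s} (e_{k+1})_-$. Second, under the standing hypothesis $\pi_s \geq \rho_s$ (the only regime in which the lemma is used, by Lemma~\ref{lem:vanishingconditions}), any edge $e_k$ with $\chi(e_k) \notin \cC_s$ satisfies $(e_k)_- \sim_{\rho_s} (e_k)_+$ and therefore $(e_k)_- \sim_{\pi_s} (e_k)_+$. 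Since the intermediate indices $j_i < k < j_{i+1}$ are by construction exactly those with $\chi(e_k) \notin \cC_s$, alternating these two $\pi_s$-equivalences along the sub-walk yields $(e_{j_i})_+ \sim_{\pi_s} (e_{j_{i+1}})_-$; the analogous argument applied to the initial and terminal segments supplies the two boundary equalities.

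I do not expect a serious conceptual obstacle: the lemma is essentially an exercise in unwinding definitions. The main demand is notational hygiene — keeping straight the three partitions $\pi_s$, $\rho_s$, and $\omegapic$ on $V(T)$, remembering that the same set $V(T_{\pi,c'})$ plays a dual role (vertices of $T_{\pi,c'}$ and edges of $\GCC(T,\pi,s)$), and being explicit that the implicit assumption $\pi_s \geq \rho_s$ is precisely what permits bridging past edges coloured outside $\cC_s$.
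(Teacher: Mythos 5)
Your argument follows the same two-step structure as the paper's proof: verifying edge incidences from Definition~\ref{def:gcc}, then establishing the vertex coincidences by chaining equivalences along the intermediate segments. In fact your version of the chaining step is slightly more careful than the paper's: the paper asserts $(e_{j_i})_+ \sim_{\rho_s} (e_{j_{i+1}})_-$ directly, but since the hypothesis only gives a walk in $\Tompic$ (not in $T$), consecutive intermediate edges need only satisfy $(e_k)_+ \sim_{\omegapic} (e_{k+1})_-$, so one must really work up to $\pi_s$ from the start by combining $\omegapic \leq \pi_s$ (for the quotient gaps) with $\rho_s \leq \pi_s$ (for the single off-colour edges), exactly as you do. You also correctly flag that $\pi_s \geq \rho_s$ is a standing hypothesis not stated in the lemma, which matches how the lemma is applied in the paper.
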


\begin{proof}
  Notice that $[(e_{j_i})_\pm]_{\pi_{\chi(e_j)}} \in V(T_{\pi,\chi(e_j)}) \subseteq E(\GCC(T,\pi,s))$ is incident with both $[(e_{j_i})_\pm]_{\pi_s}$ (because $\hsc([(e_{j_i})_\pm]_{\pi_{\chi(e_j)}}) = [(e_{j_i})_\pm]_{\pi_s}$) and $A_{j_i}$ (as $A_{j_i}$ contains $e_{j_i}$ and therefore $[(e_{j_i})_\pm]_{\pi_{\chi(e_j)}}$).

  The only thing to check is that the equalities claimed in \eqref{eq: gcc path} hold.
  Take $c \in \cC_s$ so that $(e_j)_j$ forms a walk in $\Tompic$.
  Then in particular, $[(e_{j_i})_+]_{\omegapic}$ is connected to $[(e_{j_{i+1}})_-]_{\omegapic}$ by edges $e_{j_i+1}, \ldots, e_{j_{i+1}-1}$ which have colors not in $\cC_s$.
  It follows that $(e_{j_i})_+ \sim_{\rho_s} (e_{j_{i+1}})_-$ and (as $\rho_s \leq \pi_s$) $(e_{j_i})_+ \sim_{\pi_s} (e_{j_{i+1}})_-$ as well.
  We have $(e_1)_- \sim_{\pi_s} (e_{j_1})_-$ and $(e_{j_n})_+ \sim_{\pi_s} (e_m)_+$ by an identical argument.
\end{proof}

With this in hand, we will prove the following generalization of \cite[Lemma 4.12]{ACDGM2021}.
From this point onward, we will be specializing to the case that the test graph $T$ is two-edge-connected.
This holds, in particular, when $T$ corresponds to the trace of a product, as indicated by the discussion in \S~\ref{subsec:trafficmoments}; we will see in \S~\ref{subsec: asym graph ind over diag} that the slightly more complicated terms we will need to prove Theorem~\ref{thm: permutation model} also correspond to two-edge-connected graphs.

\begin{lemma} \label{lem:leafcount}
Assume $T$ is two-edge connected.  With the setup above,
\begin{equation} \label{eq:componentcount}
\sum_{s \in \cS} (\# \pi_s - 1) + \sum_{c \in \cC} \# \cS_c (\mathfrak{f}(\Tpic)/2 - \# V(\Tpic) ) \leq 0,
\end{equation}
Moreover, equality is achieved if and only if $\GCC(T,\pi,s)$ is a tree for all $s \in \cS$, and in this case $\mathfrak{f}(\Tpic) / 2 = \#\Comp(\Tpic)$.
\end{lemma}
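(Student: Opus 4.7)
The plan is to prove, for each $s \in \cS$, the per-string inequality
\[
  \#\pi_s - 1 + \sum_{c \in \cC_s} \mathfrak{f}(\Tpic)/2 \;\leq\; \sum_{c \in \cC_s} \#V(\Tpic),
\]
after which the lemma follows by summing over $s$ and interchanging via $\sum_{c \in \cC} \#\cS_c \cdot g(c) = \sum_{s \in \cS} \sum_{c \in \cC_s} g(c)$. Fix $s$ (the case $\cC_s = \varnothing$ is trivial). First I will show that $\GCC(T,\pi,s)$ is connected: any two type-1 vertices $[v]_{\pi_s}$ and $[w]_{\pi_s}$ are joined by applying Lemma~\ref{lem:inducedgccpath} to a walk from $v$ to $w$ in the connected graph $T$, and every type-2 vertex (a component $A$ of some $\Tpic$) is adjacent to a type-1 vertex via any of the edges $[v]_{\pi_c} \in V(A)$. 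Since $\GCC(T,\pi,s)$ has $\#\pi_s + \sum_{c \in \cC_s}\#\Comp(\Tpic)$ vertices and $\sum_{c \in \cC_s} \#V(\Tpic)$ edges, its connectedness yields the weaker inequality $\sum_{c \in \cC_s} \#V(\Tpic) \geq \#\pi_s + \sum_{c \in \cC_s} \#\Comp(\Tpic) - 1$.

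To strengthen this, I observe that $\gps$ is two-edge-connected: quotients of two-edge-connected (di)graphs inherit two-edge-connectedness, because the edges of $T/\pi_s$ are in bijection with those of $T$, so removing an edge of $T/\pi_s$ amounts to removing it from $T$ and then quotienting, preserving connectedness; furthermore, the $\cC \setminus \cC_s$-coloured edges of $T/\pi_s$ are self-loops (their endpoints lie in a common $\rho_s$-block, hence in a common $\pi_s$-block), and removing self-loops preserves two-edge-connectedness. Viewing $\gps$ as obtained from the disjoint union $\bigsqcup_{c \in \cC_s} \Tpic$ by identifying vertices along $\pi_s$, each elementary identification of two vertices either merges two connected components (reducing the component count by one) or creates a new independent cycle (increasing the first Betti number by one). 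Tracking this through Euler characteristics yields the identity
\[
  \sum_{c \in \cC_s} \#V(\Tpic) - \#\pi_s - \sum_{c \in \cC_s} \#\Comp(\Tpic) + 1 \;=\; \mathrm{Betti}_1(\gps) - \sum_{c \in \cC_s} \mathrm{Betti}_1(\Tpic),
\]
so the desired per-string inequality is equivalent to the key claim
\[
  \mathrm{Betti}_1(\gps) - \sum_{c \in \cC_s} \mathrm{Betti}_1(\Tpic) \;\geq\; \sum_{c \in \cC_s} \bigl(\mathfrak{f}(\Tpic)/2 - \#\Comp(\Tpic)\bigr).
\]

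The hard part will be establishing this key claim. Using the tree identity $L - 2 = \sum_{D : d(D) \geq 3}(d(D) - 2)$ applied componentwise to $\mathcal{F}(\Tpic)$, the right-hand side equals $\tfrac{1}{2}\sum_{c \in \cC_s} \sum_{D : d(D) \geq 3}(d(D) - 2)$, summed over two-edge-connected components $D$ of $\Tpic$ of degree $d(D) \geq 3$ in $\mathcal{F}(\Tpic)$. For each such branching component $D$, two-edge-connectedness of $\gps$ forces, for each of the $d(D)$ cut edges incident to $D$, an alternative path in $\gps$ avoiding that edge; combining the cut edge with its alternative path produces a cycle in $\gps$ that necessarily uses edges of other colours, hence descends to a nontrivial cycle in $\GCC(T,\pi,s)$ not accounted for by $\sum_c \mathrm{Betti}_1(\Tpic)$. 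The delicate technical step is to verify that suitably chosen such cycles are linearly independent in $H_1$, across branchings and colours, yielding enough rank to account for the full sum $\sum_c(\mathfrak{f}(\Tpic)/2 - \#\Comp(\Tpic))$.

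Finally, equality in the per-string inequality forces equality both in the spanning-tree bound (so $\GCC(T,\pi,s)$ is a tree) and in the key claim (so $\sum_{c \in \cC_s}(\mathfrak{f}(\Tpic)/2 - \#\Comp(\Tpic)) = 0$); since each summand is non-negative, the latter forces $\mathcal{F}(\Tpic)$ to have no vertex of degree $\geq 3$ for each $c \in \cC_s$, i.e., each connected component of $\mathcal{F}(\Tpic)$ is a path or an isolated vertex, which is exactly the condition $\mathfrak{f}(\Tpic)/2 = \#\Comp(\Tpic)$.
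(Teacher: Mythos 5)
Your reduction to the per-string inequality and the Euler-characteristic reformulation are both correct, and your ``key claim'' is in fact the same inequality the paper establishes, namely
\[
\sum_{c\in\cC_s}\sum_{A\in\Comp(\Tpic)}\bigl(\mathfrak{f}(A)/2-1\bigr)\ \leq\ \#E(\GCC(T,\pi,s))-\#V(\GCC(T,\pi,s))+1,
\]
since the right-hand side equals your Betti-number difference. The problem is that you do not prove it: you explicitly defer the ``delicate technical step'' of verifying that the cycles you produce (one per cut edge meeting a branching two-edge-connected component) are linearly independent in $H_1$ with total rank at least $\sum_{c\in\cC_s}(\mathfrak{f}(\Tpic)/2-\#\Comp(\Tpic))$, and that deferred verification is exactly where the content of the lemma lives. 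There is no a priori reason the naive cycles are independent, and you would in any case need the count to respect the factor $1/2$ in $\tfrac12\sum_{D}(d(D)-2)$ rather than producing one cycle per cut edge. The paper proves the same inequality by an elementary counting device that avoids any homological independence question: iteratively prune degree-one vertices from $\GCC(T,\pi,s)$ to obtain a graph $\mathcal{G}$ with no degree-one vertices; then every component $A$ with $\mathcal{F}(A)$ nontrivial survives with $\deg_{\mathcal{G}}(A)\geq\mathfrak{f}(A)$, every other vertex of $\mathcal{G}$ has degree at least $2$, and hence
\[
\sum_{c\in\cC_s}\sum_{A\in\Comp(\Tpic)}(\mathfrak{f}(A)/2-1)\leq\sum_{x\in V(\mathcal{G})}(\deg_{\mathcal{G}}(x)/2-1)=\#E(\mathcal{G})-\#V(\mathcal{G}),
\]
which is strictly less than $\#E(\GCC)-\#V(\GCC)+1$ whenever $\GCC(T,\pi,s)$ has a cycle.

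Your equality discussion also does not close as written: since your key claim \emph{is} the per-string inequality, merely rewritten, the assertion that ``equality in the per-string inequality forces equality in the key claim'' is vacuous and does not separately force the spanning-tree bound to be tight. To conclude that $\GCC(T,\pi,s)$ must be a tree when equality holds, you need exactly the strictness statement (key claim strict when $\GCC$ is not a tree) that you postponed, and that is what the paper's pruning step delivers. As it stands the proposal is a correct setup with the central inequality and the equality case both left open.
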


\begin{proof}
The term we want to bound can be written as
\begin{equation*}
\sum_{s \in \cS} (\# \pi_s - 1) + \sum_{c \in \cC} \sum_{s \in \cS} \delta_{s \in \cS_c} (\mathfrak{f}(\Tpic)/2 - \# V(\Tpic))
=
\sum_{s \in \cS} (\# \pi_s - 1) + \sum_{s\in\cS}\sum_{c \in \cC_s} (\mathfrak{f}(\Tpic)/2 - \# V(\Tpic)).
\end{equation*}
Hence, it suffices to show that for every $s$, we have
\begin{equation} \label{eq:combinatorialestimate2}
\# \pi_s - 1 + \sum_{c \in \cC_s} (\mathfrak{f}(\Tpic)/2 - \# V(\Tpic)) \leq 0,
\end{equation}
and if this inequality is true for all $s$, then equality will hold in \eqref{eq:componentcount} if and only if equality holds in \eqref{eq:combinatorialestimate2} for all $s$.
Note that
\[
\# \pi_s = \# V(\gps) = \# V(\GCC(T,\pi,s)) - \sum_{c \in \cC_s} \# \Comp(\Tpic).
\]
By definition, the edges of $\GCC(T,\pi,s)$ correspond to $\bigsqcup_{c \in \cC_s} V(\Tpic)$, so that
\[
\sum_{c \in \cC_s} \# V(\Tpic) = \# E(\GCC(T,\pi,s)).
\]
Hence, the inequality we want is equivalent to
\[
\# V(\GCC(T,\pi,s)) - 1 + \sum_{c \in \cC_s} (\mathfrak{f}(\Tpic)/2 - \# \Comp(\Tpic)) \leq \# E(\GCC(T,\pi,s)).
\]
We can furthermore write
\[
\mathfrak{f}(\Tpic) / 2 - \# \Comp(\Tpic) = \sum_{A \in \Comp(\Tpic)} (\mathfrak{f}(A)/2 - 1).
\]
Thus, the inequality we want is
\begin{equation} \label{eq: nice graph inequality}
\sum_{c \in \cC_s} \sum_{A \in \Comp(\Tpic)} (\mathfrak{f}(A)/2 - 1) \leq \# E(\GCC(T,\pi,s)) - \# V(\GCC(T,\pi,s) + 1.
\end{equation}
Note that $\GCC(T,\pi,s)$ is connected (as $T$ is) and hence the right-hand side is nonnegative.

%For $c \in \cC_s$ and $A \in \Comp(\Tpic)$, we claim that if $\mathcal{F}(A)$ has two distinct vertices, then $A$ is part of a cycle in $\GCC(T,\pi,s)$.
For $c \in \cC_s$ and $A \in \Comp(\Tpic)$, we claim that if $A$ contains a cut edge, then: (1) $A$ is part of a cycle in $\GCC(T,\pi,s)$; and (2) each leaf in $\cF(A)$ contains a vertex which, when viewed as an edge in $\GCC(T,\pi,s)$, is part of a cycle.
Suppose that $\mathcal{F}(A)$ is not a single vertex; then there is a cut edge $e$ in $A$ which divides $A$ into pieces $B_1$ and $B_2$.
Because $T$ is two-edge-connected, so is its quotient $\Tompic$.
Thus, there is some path in $\Tompic$ from a vertex of $B_1$ to a vertex in $B_2$ that does not use the edge $e$.
By considering the segment of the path from the last time it exits $B_1$ to the first time afterward that it enters $B_2$, we can assume without loss of generality that the path uses only edges not in $A$.
Let $v\in B_1$ and $w \in B_2$ be the endpoints of this path; then Lemma~\ref{lem:inducedgccpath} tells us that there is a walk (and therefore a path) in $\GCC(T,\pi,s)$ from $\hsc(v)$ to $\hsc(w)$ which does not visit $A \in V(\GCC(T, \pi, s))$.
However both $\hsc(v)$ and $\hsc(w)$ are adjacent to $A$ (using $v$ and $w$, respectively, viewed as (distinct!) edges in $\GCC(T,\pi,s)$), and so we may complete the given path to a cycle by passing through $A$, establishing (1).
If we started with a cut edge incident with a leaf in $\cF(A)$, then $B_1$ or $B_2$ will be this leaf, and $v$ or $w$ will be a vertex establishing (2).

%  F

 Part (1) of the above claim implies that if $\GCC(T,\pi,s)$ is a tree, then for each $c \in \cC_s$ and each $A \in \Comp(\Tpic)$, $\mathcal{F}(A)$ is a single point, and hence $\mathfrak{f}(A) = 2$ by the definition of $\mathfrak{f}$.
 Therefore, if $\GCC(T,\pi,s)$ is a tree, then both sides of \eqref{eq: nice graph inequality} are zero, hence equality is achieved.

Now suppose that $\GCC(T,\pi,s)$ is not a tree.
Consider ``pruning'' $\GCC(T,\pi,s)$ by removing all vertices of degree one and their corresponding edges iteratively until there are no degree-one vertices left, and call this graph $\mathcal{G}$.
$\mathcal{G}$ is independent of the order of pruning operations, as it contains precisely those vertices which are contained in at least one cycle; it is not a tree, and every vertex in $\mathcal{G}$ has degree at least two.
We have
 \[
\# E(\mathcal{G}) - \# V(\mathcal{G}) = \# E(\GCC(T,\pi,s)) - \# V(\GCC(T,\pi,s)),
 \]
 as this quantity is maintained at every step of the pruning operation.

Part (1) of the above claim showed that every colored component $A$ in $\GCC(T,\pi,s)$ such that $\mathcal{F}(A)$ is not a point must be part of a nontrivial cycle in $\GCC(T,\pi,s)$, and hence is one of the vertices that remains in $\mathcal{G}$ after the pruning process.
Moreover, by part (2) it follows that $\deg_{\mathcal{G}}(A) \geq \mathfrak{f}(A)$ (note that the edges guaranteed by (2) are necessarily distinct because the vertices come from distinct leaves).
The vertices of $\mathcal{G}$ include these colored components, as well as some colored components where $\mathcal{F}(A)$ is a point and some vertices from $V(\gps)$.
For every vertex of $\mathcal{G}$ that is a colored component $A$ where $\mathcal{F}(A)$ is a singleton, we also have $\mathfrak{f}(A) = 2 \leq \deg_{\mathcal{G}}(A)$ since every vertex $x$ in $\mathcal{G}$ must have degree at least $2$ by construction.  Similarly, for the vertices in $\mathcal{G}$ that come from $V(\gps)$, we have $\deg_{\mathcal{G}}(x)/2 - 1 \geq 0$.  Putting all this information together,
\begin{align*}
\sum_{c \in \cC_s} \sum_{A \in \Comp(\Tpic)} (\mathfrak{f}(A)/2 - 1) &\leq \sum_{x \in V(\mathcal{G})} (\deg_{\mathcal{G}}(x)/2 - 1) \\
&= \# E(\mathcal{G}) - \# V(\mathcal{G}) \\
&< \# E(\GCC(T,\pi,s)) - \# V(\GCC(T,\pi,s)) + 1.
\end{align*}
Therefore, \eqref{eq: nice graph inequality} holds with strict inequality.

We showed that when $\GCC(T, \pi, s)$ is a tree each connected component of $\Tpic$ is two-edge connected; thus the final equality claimed in the lemma holds as well.
\end{proof}

Combining Lemma \ref{lem:leafcount} with Corollary \ref{cor:trafficevaluation} and Corollary \ref{cor: traffic growth bound} (specifically, \eqref{eq:longday}), we obtain the following

\begin{cor} \label{cor:asymptoticexpansion2}
Let $T$ be two-edge connected and continue all the notation from above.  Then
\[
  \mathbb{E} \tau_{N^{\# \cS}}(\mathring{T}) = \sum_{\substack{\pi \in \mathcal{P}(V(T))^{\cS} \\ \forall s \in \cS, \pi_s \geq \rho_s \\ \forall s \in \cS, \GCC(T,\pi,s) \text{ is a tree}}} \lambda_N(T,\pi) \prod_{c \in \cC} \tau_{N^{\# \cS_c}}^\circ(\Tpic) + O(N^{-1}).
\]
Here the implicit constant in the $O(N^{-1})$ depends only on the graph $T$ and the operator norm bounds $\sup_{v} \norm{\Lambda_v^{(N)}}$ and $\sup_{e} \norm{X_e^{(N)}}$.
\end{cor}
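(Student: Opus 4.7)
The plan is to deduce this corollary directly from the expansion in Corollary \ref{cor:trafficevaluation} by splitting the sum over $\pi$ into two pieces based on whether or not the equality condition from Lemma \ref{lem:leafcount} is met, and showing that the non-equality piece is $O(N^{-1})$ while the equality piece gives the stated leading contribution. The hard combinatorial work has already been done in proving Lemma \ref{lem:leafcount}, so what remains is essentially accounting for powers of $N$.

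To wit, my first step is to take the second formula in Corollary \ref{cor:trafficevaluation}, which presents $\mathbb{E}\tau_{N^{\#\cS}}(\mathring{T})$ as
\[
\sum_{\substack{\pi \in \cP(V(T))^{\cS} \\ \pi_s \geq \rho_s}} (1 + O(N^{-1})) N^{\sum_{s \in \cS}(\#\pi_s - 1)} \lambda_N(T,\pi) \prod_{c \in \cC} N^{\#\cS_c(\#\Comp(\Tpic) - \#V(\Tpic))} \tau_{N^{\#\cS_c}}^\circ(\Tpic),
\]
and partition the index set $\{\pi : \pi_s \geq \rho_s\}$ into $\cP_{\mathrm{tree}}$ (those $\pi$ for which $\GCC(T, \pi, s)$ is a tree for every $s \in \cS$) and its complement $\cP_{\mathrm{non}}$.

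For $\pi \in \cP_{\mathrm{non}}$, I will combine the exponent prefactor with the bound \eqref{eq:longday} from Corollary \ref{cor: mingo speicher injective} on each factor $N^{\#\cS_c\#\Comp(\Tpic)}\tau_{N^{\#\cS_c}}^\circ(\Tpic)$, exactly as in the proof of Corollary \ref{cor: traffic growth bound}. This bounds the magnitude of the term by a constant times $N$ raised to $\sum_{s}(\#\pi_s - 1) + \sum_c \#\cS_c (\mathfrak{f}(\Tpic)/2 - \#V(\Tpic))$, which Lemma \ref{lem:leafcount} tells us is strictly less than $0$, hence at most $-1$ since all of the quantities involved are integers or half-integers (and, in fact, integers by a quick parity check using the definition of $\mathfrak{f}$). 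Summing over the finitely many such $\pi$ (depending only on $T$) yields an $O(N^{-1})$ contribution.

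For $\pi \in \cP_{\mathrm{tree}}$, the key observation (the last sentence of Lemma \ref{lem:leafcount}) is that $\mathfrak{f}(\Tpic)/2 = \#\Comp(\Tpic)$ for every $c$, so the equality case of \eqref{eq:componentcount} gives
\[
\sum_{s \in \cS}(\#\pi_s - 1) + \sum_{c \in \cC} \#\cS_c(\#\Comp(\Tpic) - \#V(\Tpic)) = 0,
\]
meaning the explicit power of $N$ in front of each such term is $1$. Moreover, for these $\pi$ the quantities $\lambda_N(T,\pi)$ and each $\tau_{N^{\#\cS_c}}^\circ(\Tpic)$ are bounded uniformly in $N$ by Corollary \ref{cor: mingo speicher injective} and the $\norm{\Lambda_v^{(N)}}$, $\norm{X_e^{(N)}}$ bounds, so the $(1 + O(N^{-1}))$ factor can be pulled out as an additive $O(N^{-1})$ error. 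Combining the $\cP_{\mathrm{tree}}$ and $\cP_{\mathrm{non}}$ contributions yields the claimed expression; the implicit constant in $O(N^{-1})$ depends only on $T$, $R$, and the Mingo--Speicher constants $C_{T/\pi,c}'$, which in turn depend only on the underlying graph of $T$.

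The only mild obstacle is keeping the multiple sources of $O(N^{-1})$ errors straight (the $(1 + O(N^{-1}))$ prefactor, the $O(N^{-1})$ contribution from $\cP_{\mathrm{non}}$ terms, and the fact that these must be combined with factors that are bounded uniformly in $N$); but since the set of relevant $\pi$ is finite and depends only on $T$, this is routine and only contributes uniform constants that can be absorbed into the implicit constant.
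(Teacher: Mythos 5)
Your overall strategy matches what the paper intends: split the sum in Corollary \ref{cor:trafficevaluation} according to whether every $\GCC(T,\pi,s)$ is a tree, use Lemma \ref{lem:leafcount} together with the Mingo--Speicher bound (via Corollary \ref{cor: traffic growth bound}) to kill the non-tree terms, and absorb the multiplicative $(1+O(N^{-1}))$ into an additive error on the tree terms.

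There is, however, a gap in your justification that the non-tree terms are $O(N^{-1})$ rather than merely $o(1)$. You argue that the exponent $\sum_{s}(\#\pi_s-1)+\sum_{c}\#\cS_c(\mathfrak{f}(\Tpic)/2-\#V(\Tpic))$ is ``strictly less than $0$, hence at most $-1$'' and appeal to a ``quick parity check'' to claim it is an integer. No such parity argument exists in general: $\mathfrak{f}(\Tpic)$ need not be even (a component of $\Tpic$ whose cut-edge forest $\cF$ is a tripod contributes $3$ to $\mathfrak{f}$), and $\#\cS_c$ need not be even either, so $\sum_c\#\cS_c\,\mathfrak{f}(\Tpic)/2$ can be a strict half-integer. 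If the exponent were only a half-integer, strict negativity would only give $\le -\tfrac12$, i.e.\ $O(N^{-1/2})$, which is weaker than the claim. The correct way to get $\le -1$ is to trace back inside the proof of Lemma \ref{lem:leafcount}: there, \eqref{eq:combinatorialestimate2} is rewritten as \eqref{eq: nice graph inequality}, and in the non-tree case one actually proves
$\sum_{c\in\cC_s}\sum_{A\in\Comp(\Tpic)}(\mathfrak{f}(A)/2-1)\le \#E(\mathcal{G})-\#V(\mathcal{G})=\#E(\GCC(T,\pi,s))-\#V(\GCC(T,\pi,s))$,
i.e.\ the left side is bounded above by the right side of \eqref{eq: nice graph inequality} \emph{minus one}. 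Since \eqref{eq: nice graph inequality} and \eqref{eq:combinatorialestimate2} differ only by integer rearrangements, this means $\#\pi_s-1+\sum_{c\in\cC_s}(\mathfrak{f}(\Tpic)/2-\#V(\Tpic))\le -1$ whenever $\GCC(T,\pi,s)$ is not a tree, and summing over $s$ (each term being $\le 0$ and at least one being $\le -1$) gives the desired $\le -1$. With that fix your argument is complete, and the dependence of the implicit constant on $T$ and the operator-norm bounds is exactly as you describe.
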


The following observation will be useful for the next section:

\begin{lem} \label{lem: tree injective}
Fix $s$ and $c$.  Suppose that $\GCC(T,\pi,s)$ is a tree.  Then the map $\hsc: \Tpic \to \gps$ is injective on each component of $\Tpic$.
\end{lem}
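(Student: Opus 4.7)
The plan is to prove the contrapositive: if $\hsc$ fails to be injective on some connected component of $\Tpic$, then $\GCC(T,\pi,s)$ contains a cycle (of length two, in fact) and hence cannot be a tree. The key observation is that Definition \ref{def:gcc} is engineered precisely so that elements of $V(\Tpic)$ become edges of $\GCC(T,\pi,s)$, incident to the component containing them and to their image under $\hsc$. Therefore, any collision under $\hsc$ within a single component of $\Tpic$ automatically produces a pair of parallel edges in $\GCC(T,\pi,s)$.

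Concretely, I would start by assuming there exist distinct $v_1, v_2 \in V(\Tpic)$ lying in the same connected component $A \in \Comp(\Tpic)$ with $\hsc(v_1) = \hsc(v_2) =: w \in V(\gps)$. Unpacking Definition \ref{def:gcc}, the vertex $v_i \in V(\Tpic)$ is realized as an edge in $\GCC(T,\pi,s)$ joining the component of $\Tpic$ containing $v_i$ with $\hsc(v_i) \in V(\gps)$. For both $i = 1, 2$ this component is $A$ and this image is $w$, so $v_1$ and $v_2$ are two distinct parallel edges between the same pair of bipartite vertices $A$ and $w$. This immediately yields a closed cycle of length two in the multigraph $\GCC(T,\pi,s)$, contradicting the hypothesis that it is a tree.

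There is no real obstacle here beyond unpacking the definitions; the lemma is essentially a restatement of the fact that a tree, interpreted in the multigraph sense, admits no parallel edges. The only minor point worth flagging in the write-up is to be explicit that we are reading ``tree'' to forbid multi-edges (equivalently, cycles of length two), which matches the sense in which it is used earlier (for instance in Lemma~\ref{lem:leafcount}, where such a 2-cycle would be counted as a non-trivial cycle).
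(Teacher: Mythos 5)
Your proof is correct and matches the paper's argument essentially verbatim: both unpack the definition of $\GCC(T,\pi,s)$ to observe that a collision under $\hsc$ within a component produces two parallel edges, which a tree cannot have. The only difference is notational (you call the $\Tpic$-vertices $v_1,v_2$ and the $\gps$-vertex $w$, while the paper uses $w_1,w_2$ and $v$).
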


\begin{proof}
Suppose that two vertices $w_1$ and $w_2$ in a component $A$ of $\Tpic$ both map to the same vertex $v$ in $\gps$.  Then $w_1$ and $w_2$ both produce edges between $A$ and $v$ in $\GCC(T,\pi,s)$.  As $\GCC(T,\pi,s)$ is a tree, it does not have parallel edges and so $w_1 = w_2$.
\end{proof}

\subsection{Convergence in $L^2$}
\label{subsec: asym graph ind over diag}

In this section, we will prove that the convergence in Theorem~\ref{thm: permutation model} holds in $L^2$, and so we will use all the notation from its setup.
This amounts to showing that
\[
  \lim_{N \to \infty} \mathbb{E} \norm{\Delta_{N^{\#\cS}}[(Y_1^{(N)} - \Delta_{N^{\#\cS}}[Y_1^{(N)}]) \dots (Y_k^{(N)} - \Delta_{N^{\#\cS}}[Y_k^{(N)}])]}_2^2 = 0.
\]
This also implies that the convergence holds in probability.

Observe that
\begin{multline*}
\norm{\Delta_{N^{\#\cS}}[(Y_1^{(N)} - \Delta_{N^{\#\cS}}[Y_1^{(N)}]) \dots (Y_k^{(N)} - \Delta_{N^{\#\cS}}[Y_k^{(N)}])]}_2^2 \\
= \tr_{N^{\# \cS}}\left[
\Delta_{N^{\#\cS}}\left[(Y_k^{(N)} - \Delta_{N^{\#\cS}}[Y_k^{(N)}])^* \dots (Y_1^{(N)} - \Delta_{N^{\#\cS}}[Y_1^{(N)}])^* \right]\cdot\right.\\
\left.\Delta_{N^{\#\cS}}\left[(Y_1^{(N)} - \Delta_{N^{\#\cS}}[Y_1^{(N)}]) \dots (Y_k^{(N)} - \Delta_{N^{\#\cS}}[Y_k^{(N)}])\right] \right].
\end{multline*}
Our goal is to express the right-hand side as a linear combination of graph moments, i.e.\ the traces of test graphs studied in \S \ref{subsec: combinatorial graph moments}.

Notice that if we expand the products in the right hand side, we have terms of the form
\[
    \tr_{N^{\#\cS}}\sq{\Delta_{N^{\#\cS}}\sq{ \cdots } \Delta_{N^{\#\cS}}\sq{ \cdots } },
\]
where the omitted pieces are products of $Y_i^{(N)}$ and diagonal matrices $\Delta_{N^{\#\cS}}(Y_i^{(N)})$, or their adjoints; to such a term we associate a set $I \subseteq \set{\pm1, \ldots, \pm k}$ recording the indices where a $\Delta_{N^{\#\cS}}(Y_i^{(N)})$ was chosen, with the negative indices corresponding to the adjointed copies.
We will encode these terms as graph moments, using the same principles as in \S\ref{subsec:trafficmoments}.

For each $i \in [k]$, we have
\[
  Y_i^{(N)} =
  \Lambda_{i,1}^{(N)} \uu{X}_{i,1}^{(N)} \dots \Lambda_{i,\ell(i)}^{(N)} \uu{X}_{i,\ell(i)}^{(N)}.
\]
Let $B_i$ be the \tdg{} pictured below with vertices $u_{i,1}, \ldots, u_{i, \ell(i)+1}$ and directed edges connecting the vertices ordered from $u_{i, \ell(i)+1}$ to $u_{i, 1}$ labelled by $X_{i,\ell(i)}, \ldots, X_{i, 1}$.
Likewise let $B_i'$ be the test graph with vertices $u_{i,1}', \ldots, u_{i,\ell(i)+1}'$ and directed edges connecting the vertices ordered from $u_{i,1}$ to $u_{i,\ell(i)+1}$ labelled by $X_{i,1}^*, \ldots, X_{i,\ell(i)}^*$.
\begin{align*}
B_i &= \begin{tikzpicture}
    \node[circle,fill,label=$u_{i,1}$] (0) at (0,0) {};
    \node[circle,fill,label=$u_{i,2}$] (1) at (1.5,0) {};
    \node[circle,fill,label=$u_{i,3}$] (2) at (3,0) {};
    \node[circle,fill,label=$u_{i,\ell(i)}$] (3) at (4.5,0) {};
    \node[circle,fill,label=$u_{i,\ell(i)+1}$] (4) at (6,0) {};
    \draw[<-] (0) -- node[below] {$X_{i,1}$} (1);
    \draw[<-] (1) -- node[below] {$X_{i,2}$} (2);
    \draw[<-,dashed] (2) -- (3);
    \draw[<-] (3) -- node[below] {$X_{i,\ell(i)}$} (4);
\end{tikzpicture} \\
B_i' &= \begin{tikzpicture}[baseline]
    \node[circle,fill,label=$u_{i,1}'$] (0) at (0,0) {};
    \node[circle,fill,label=$u_{i,2}'$] (1) at (1.5,0) {};
    \node[circle,fill,label=$u_{i,3}'$] (2) at (3,0) {};
    \node[circle,fill,label=$u_{i,\ell(i)}'$] (3) at (4.5,0) {};
    \node[circle,fill,label=$u_{i,\ell(i)+1}'$] (4) at (6,0) {};
    \draw[->] (0) -- node[below] {$X_{i,1}^*$} (1);
    \draw[->] (1) -- node[below] {$X_{i,2}^*$} (2);
    \draw[->,dashed] (2) -- (3);
    \draw[->] (3) -- node[below] {$X_{i,\ell(i)}^*$} (4);
\end{tikzpicture}
\end{align*}

Construct a test graph by taking $B_{k}', \ldots, B_{1}', B_{1}, \ldots, B_{k}$ and joining them end to end to form two cycles: specifically, we identify $u_{i, 1}$ with $u_{i+1, \ell(i+1)+1}$ and $u_{1,1}$ with $u_{k, \ell(k)+1}$; and we identify $u_{i+1, \ell(i)+1}'$ with $u_{i, 1}'$ and $u_{1, \ell(1)+1}'$ with $u_{k, 1}'$.
Next, we identify $u_{1,1}$ with $u_{1,1}'$.
Call the resulting \tdg{} $T$, and assigning $\Lambda_{i,j}^{(N)}$ to $u_{i, j}$ and $(\Lambda_{i,j}^{(N)})^*$ to $u_{i,j}'$, we have a corresponding \tdg{} $\mathring{T}$ (here we multiply the two diagonal matrices assigned to $u_{1,1} = u_{1,1}'$).
The \tdg{} $\mathring{T}$ has the following form:
\begin{center}
    \begin{tikzpicture}[baseline,yshift=1ex]
        \node[circle,fill] (0) at (0,0) {};
        \node[circle,fill,shift={(1,0)}] (1) at (240:1) {};
        \node[circle,fill,shift={(1,0)}] (2) at (300:1) {};
        \node[circle,fill,shift={(1,0)}] (3) at (60:1) {};
        \node[circle,fill,shift={(1,0)}] (4) at (120:1) {};
        \node[circle,fill,shift={(-1,0)}] (1a) at (240:-1) {};
        \node[circle,fill,shift={(-1,0)}] (2a) at (300:-1) {};
        \node[circle,fill,shift={(-1,0)}] (4a) at (60:-1) {};
        \node[circle,fill,shift={(-1,0)}] (5a) at (120:-1) {};

        %\path[->] (0) edge[out=216, in=144, looseness = 10] (0);
        \path[->] (0) edge (1);
        \path[<-] (1) edge[out=204, in=276, looseness = 10] (1);
        \path[->] (1) edge (2);
        \path[<-] (2) edge[out=264, in=336, looseness = 10] (2);
        \path[->,dashed,out=60,in=-60] (2) edge (3);
        \path[<-] (3) edge[out=24, in=96, looseness = 10] (3);
        \path[->] (3) edge (4);
        \path[<-] (4) edge[out=84, in=156, looseness = 10] (4);
        \path[->] (4) edge (0);

        \path[->] (0) edge[out=36, in=-36, looseness = 10] (0);
        \path[->] (0) edge (1a);
        \path[<-] (1a) edge[out=24, in=96, looseness = 10] (1a);
        \path[->] (1a) edge (2a);
        \path[<-] (2a) edge[out=84, in=156, looseness = 10] (2a);
        \path[->,dashed,out=-120,in=120] (2a) edge (4a);
        \path[<-] (4a) edge[out=204, in=276, looseness = 10] (4a);
        \path[->] (4a) edge (5a);
        \path[<-] (5a) edge[out=264, in=336, looseness = 10] (5a);
        \path[->] (5a) edge (0);
    \end{tikzpicture}
    .
\end{center}
Notice that $T$ and $\mathring{T}$ are two-edge-connected.

Now, fix $I \subseteq \set{\pm1, \ldots, \pm k}$.
Let $\rho_I$ be the partition of $V(T)$ which identifies $u_{i,1}$ with $u_{i, \ell(i)+1}$ (or $u_{i,1}'$ with $u_{i, \ell(i)+1}'$ when appropriate) for each $i \in I$, and set $T_I = T/\rho_I$.
Multiplying the diagonal matrices corresponding to the identified vertices, we get a collection $\Lambda_I$ which yields a \tdg{} $\mathring{T}_I$.

Putting this all together,
$(-1)^{\# I}\tau_{N^{\#S}}(\mathring{T}_I)$ is precisely the term in the expansion above where terms under $\Delta_{N^{\#\cS}}$ were chosen at indices corresponding to the elements of $I$.
%
%\
Thus, we obtain
\begin{multline*}
\tr_{N^{\# \cS}}\left[
\Delta_{N^{\#\cS}}\left[(Y_k^{(N)} - \Delta_{N^{\#\cS}}[Y_k^{(N)}])^* \dots (Y_1^{(N)} - \Delta_{N^{\#\cS}}[Y_1^{(N)}])^* \right]\cdot\right.\\
\left.\Delta_{N^{\#\cS}}\left[(Y_1^{(N)} - \Delta_{N^{\#\cS}}[Y_1^{(N)}]) \dots (Y_k^{(N)} - \Delta_{N^{\#\cS}}[Y_k^{(N)}])\right] \right] \\
= \sum_{I \subseteq [k] \cup -[k]} (-1)^{|I|} \tau_{N^{\# \cS}}[\mathring{T_I}].
\end{multline*}

%N

We can now apply Corollary \ref{cor:asymptoticexpansion2} to the test graph $\mathring{T_I}$ to obtain the following expression.

\begin{lem} \label{lem: horror}
\begin{multline*}
\mathbb{E} \norm{\Delta_{N^{\#\cS}}\left[(Y_1^{(N)} - \Delta_{N^{\#\cS}}[Y_1^{(N)}]) \dots (Y_k^{(N)} - \Delta_{N^{\#\cS}}[Y_k^{(N)}])\right]}_2^2 \\
= \sum_{I \subseteq [k] \cup -[k]} (-1)^{|I|} \sum_{\substack{\pi \in \mathcal{P}(V(T_I))^{\cS} \\ \forall s \in \cS, \pi_s \geq \rho_{I,s} \\ \forall s \in \cS, \GCC(T_I,\pi,s) \text{ is a tree}}} \lambda_N(T_I,\pi) \prod_{c \in \cC} \tau_{N^{\# \cS_c}}^\circ((T_I)_{\pi,c}) + O(N^{-1}),
\end{multline*}
where $\rho_{I,s}$ is the partition for $T_I$ defined analogously to $\rho_s$ in Notation \ref{not: rose}.
\end{lem}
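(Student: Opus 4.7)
The plan is to apply Corollary~\ref{cor:asymptoticexpansion2} to each of the test digraphs $\mathring{T_I}$ already constructed in the discussion preceding the lemma. Most of the combinatorial work has been laid out: the identity
\[
\mathbb{E}\norm{\Delta_{N^{\#\cS}}[(Y_1^{(N)} - \Delta_{N^{\#\cS}}[Y_1^{(N)}]) \dots (Y_k^{(N)} - \Delta_{N^{\#\cS}}[Y_k^{(N)}])]}_2^2
= \sum_{I \subseteq [k]\cup -[k]} (-1)^{|I|}\, \mathbb{E}\tau_{N^{\#\cS}}(\mathring{T_I})
\]
has already been established, so the remaining content is to invoke Corollary~\ref{cor:asymptoticexpansion2} term by term and sum up. The constants in the $O(N^{-1})$ error from that corollary depend only on the underlying digraph structure and on the uniform operator-norm bounds for the $\Lambda_{i,j}^{(N)}$ and $X_{i,j}^{(N)}$, which are uniformly bounded by hypothesis; since there are only finitely many subsets $I$, the errors can be aggregated into a single $O(N^{-1})$ term.

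The one hypothesis of Corollary~\ref{cor:asymptoticexpansion2} that must be checked is that each $T_I$ is two-edge-connected, since the corollary is only stated in that case. For $I = \emptyset$, the digraph $T_I = T$ consists of two cycles (the ``unadjointed'' cycle formed from $B_1, \ldots, B_k$ and the ``adjointed'' cycle formed from $B_1', \ldots, B_k'$) glued together at the single vertex $u_{1,1} = u_{1,1}'$, together with self-loops at every vertex coming from the diagonal matrices $\Lambda_{i,j}^{(N)}$. Every non-loop edge lies on one of the two cycles, hence is in a cycle and so is not a cut edge; self-loops are trivially not cut edges either. Thus $T$ is two-edge-connected. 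For general $I$, the digraph $T_I$ is obtained from $T$ by identifying certain pairs of vertices, which is a quotient operation that can only merge edges into cycles and never creates a cut edge out of a non-cut edge; so $T_I$ remains two-edge-connected.

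With two-edge-connectedness confirmed, Corollary~\ref{cor:asymptoticexpansion2} applied to $\mathring{T_I}$ gives
\[
\mathbb{E}\tau_{N^{\#\cS}}(\mathring{T_I}) = \sum_{\substack{\pi \in \mathcal{P}(V(T_I))^{\cS} \\ \forall s \in \cS,\ \pi_s \geq \rho_{I,s} \\ \forall s \in \cS,\ \GCC(T_I,\pi,s)\text{ is a tree}}} \lambda_N(T_I,\pi) \prod_{c \in \cC} \tau_{N^{\#\cS_c}}^\circ((T_I)_{\pi,c}) + O(N^{-1}),
\]
where $\rho_{I,s}$ denotes the analogue of $\rho_s$ from Notation~\ref{not: rose} for the digraph $T_I$ (which makes sense because the coloring $\chi$ of edges of $T$ descends to a coloring of the edges of $T_I$, since the quotient is by identifications of vertices only). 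Summing over $I$ with the signs $(-1)^{|I|}$ yields exactly the formula claimed in the lemma.

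The only delicate point I expect is the bookkeeping: confirming that the decomposition by subsets $I \subseteq [k]\cup -[k]$ with the signs $(-1)^{|I|}$ is correct and that the self-loops encoding the diagonal matrices $\Lambda_{i,j}^{(N)}$ and $(\Lambda_{i,j}^{(N)})^*$ are handled consistently under the quotient (in particular, at the vertex $u_{1,1} = u_{1,1}'$ one must multiply the two diagonal matrices). But these are routine checks given the explicit construction of $T_I$ and $\mathring{T_I}$, not genuine obstacles.
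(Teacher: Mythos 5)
Your proposal is correct and follows essentially the same route as the paper: the identity $\mathbb{E}\norm{\cdots}_2^2 = \sum_I (-1)^{|I|}\mathbb{E}\tau_{N^{\#\cS}}(\mathring{T_I})$ is established in the preceding discussion, and the lemma is then obtained by applying Corollary~\ref{cor:asymptoticexpansion2} to each $\mathring{T_I}$ and collecting the finitely many $O(N^{-1})$ errors. In fact you are slightly more careful than the paper, which only remarks that $T$ and $\mathring{T}$ are two-edge-connected; your observation that quotients of two-edge-connected graphs remain two-edge-connected (since the image of a cycle through any edge is still a closed walk through that edge) is the step needed to legitimately invoke the corollary for every $T_I$, not just $T_{\varnothing}$. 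One minor slip: you describe $T$ as already carrying the $\Lambda$-labelled self-loops, but in the paper's construction $T$ is the loop-free digraph and $\mathring{T}$ is the decorated version; since the two-edge-connectedness hypothesis of Corollary~\ref{cor:asymptoticexpansion2} concerns the loop-free $T$ (and self-loops are never cut edges anyway), this does not affect the validity of your argument.
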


We will follow a similar approach to the proof of \cite[Lemma 4.8]{ACDGM2021} and switch the order of summation above so that the terms $(-1)^{|I|}$ cancel out.
%.
We will first express the terms on the right-hand side directly in terms of $T$ rather than in terms of $T_I$.
Since $T_I$ is a quotient of $T$, a partition $\pi_s$ of $V(T_I)$ can be pulled back to a partition $\tilde{\pi}_s$ of $V(T)$.
Moreover, a partition $\tilde{\pi}_s$ of $V(T)$ corresponds to a partition $\pi_s$ of $V(T_I)$ if and only $\pi_s \geq \rho_I$.
If $\pi \in \mathcal{P}(V(T_I))^{\cS}$ and $\tilde{\pi}$ is the corresponding element of $\mathcal{P}(V(T))^{\cS}$, then $(T_I)_{\pi,c}$ can be naturally identified with $T_{\tilde{\pi},c}$; $\GCC(T_I,\pi,s)$ is naturally identified with $\GCC(T,\tilde{\pi},s)$, and $\lambda_N(T_I,\pi) = \lambda_N(T,\tilde{\pi})$.
For verifying $\lambda_N(T_I,\pi) = \lambda_N(T,\tilde{\pi})$, the only subtlety is to understand what happens when several of the loops labeled with $\Lambda_{i,j}$'s lie over the same vertex; multiplying together the matrices labeling these self-loops is the correct way to compensate for this, which we leave as an exercise to the reader.

Thus, we can express all the terms in Lemma \ref{lem: horror} directly in terms of $T$.  Note that $\pi_s \geq \rho_{I,s}$ is equivalent to $\tilde{\pi}_s \geq \tilde{\rho}_{I,s}$, and one can check that $\tilde{\rho}_{I,s} = \rho_I \vee \rho_s$, where $\rho_s$ is the partition for $T$ given by \ref{not: rose}.  So in other words, the condition becomes $\tilde{\pi}_s \geq \rho_s$ and $\tilde{\pi}_s \geq \rho_I$.  Therefore, the expression in Lemma \ref{lem: horror} becomes
\[
\sum_{I \subseteq [k] \cup -[k]} (-1)^{|I|} \sum_{\substack{\tilde{\pi} \in \mathcal{P}(V(T))^{\cS} \\ \forall s \in \cS, \tilde{\pi}_s \geq \rho_s \vee \rho_I \\ \forall s \in \cS, \GCC(T,\tilde{\pi},s) \text{ is a tree}}} \lambda_N(T,\tilde{\pi}) \prod_{c \in \cC} \tau_{N^{\# \cS_c}}^\circ(T_{\tilde{\pi},c}) + O(1/N).
\]
Now that all everything is expressed in terms of $T$ rather than $T_I$, we can exchange the order of summation.  Here we note that $\tilde{\pi}_s \geq \rho_s \vee \rho_I$ is equivalent to $\tilde{\pi}_s \geq \rho_s$ and $\tilde{\pi}_s \geq \rho_I$.  Thus, we obtain
\[
\sum_{\substack{\tilde{\pi} \in \mathcal{P}(V(T))^{\cS} \\ \forall s \in \cS, \tilde{\pi}_s \geq \rho_s \\ \forall s \in \cS, \GCC(T,\tilde{\pi},s) \text{ is a tree}}}
\left( \sum_{\substack{I \subseteq [k] \cup -[k] \\ \rho_I \leq \bigwedge_{s \in \cS} \tilde{\pi}_s }} (-1)^{|I|} \right)
\lambda_N(T,\tilde{\pi}) \prod_{c \in \cC} \tau_{N^{\# \cS_c}}^\circ(T_{\tilde{\pi},c}) + O(1/N).
\]
Now the condition $\rho_I \leq \bigwedge_{s \in \cS} \tilde{\pi}_s$ is equivalent to $I \subseteq J_{\tilde{\pi}}$, where $J_{\tilde{\pi}}$ is the set of positive indices $i$ such that $u_{i,1}$ and $u_{i+1,1}$ are identified in $\bigwedge_{s \in \cS} \tilde{\pi}_s$ and negative indices $-i$ such that $u_{i,1}'$ and $u_{i+1,1}'$ are identified by $\bigwedge_{s \in \cS} \tilde{\pi}_s$.  Thus,
\[
\sum_{\substack{I \subseteq [k] \cup -[k] \\ \rho_I \leq \bigwedge_{s \in \cS} \tilde{\pi}_s }} (-1)^{|I|} = \sum_{I \subseteq J_{\tilde{\pi}}} (-1)^{|I|} = \begin{cases}
    1 & \text{if } J_{\tilde{\pi}} = \varnothing \\
    0 & \text{otherwise.}
\end{cases}
\]
Therefore,
\[
\sum_{\substack{\tilde{\pi} \in \mathcal{P}(V(T))^{\cS} \\ J_{\tilde{\pi}} = \varnothing \\ \forall s \in \cS, \tilde{\pi}_s \geq \rho_s \\ \forall s \in \cS, \GCC(T,\tilde{\pi},s) \text{ is a tree}}} \lambda_N(T,\tilde{\pi}) \prod_{c \in \cC} \tau_{N^{\# \cS_c}}^\circ(T_{\tilde{\pi},c}) + O(1/N).
\]
Renaming $\tilde{\pi}$ to $\pi$, we conclude the following:
\begin{lem}
Assume the setup of Theorem \ref{thm: permutation model}.  Let $T$ be the digraph in the foregoing argument (two cycles glued together).  Then
\begin{multline*}
\mathbb{E} \norm{\Delta_{N^{\#\cS}}\left[(Y_1^{(N)} - \Delta_{N^{\#\cS}}[Y_1^{(N)}]) \dots (Y_k^{(N)} - \Delta_{N^{\#\cS}}[Y_k^{(N)}])\right]}_2^2 \\
=\sum_{\substack{\pi \in \mathcal{P}(V(T))^{\cS} \\ J_\pi = \varnothing \\ \forall s \in \cS, \pi_s \geq \rho_s \\ \forall s \in \cS, \GCC(T,\pi,s) \text{ is a tree}}}
\lambda_N(T,\pi)
\prod_{c \in \cC} \tau_{N^{\# \cS_c}}^\circ(\Tpic)
+ O(1/N).
\end{multline*}
where the implicit constants in the $O(1/N)$ terms only depend on the maximum of the operator norms of the matrices $\Lambda_{i,j}$ and $X_{i,j}$ and the digraph $T$.
\end{lem}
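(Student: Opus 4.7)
The plan is to compute the squared $L^2$-norm as a trace, expand by multilinearity, and then realise each resulting term as a graph moment to which Corollary~\ref{cor:asymptoticexpansion2} applies. Writing $\|X\|_2^2 = \tr(X^*X)$, the left-hand side becomes a single trace of a product of two diagonal expectations. Expanding each factor $(Y_i - \Delta_{N^{\#\cS}}[Y_i])$ via multilinearity, the terms in the expansion are parametrised by subsets $I \subseteq \{\pm 1, \ldots, \pm k\}$ carrying a sign $(-1)^{|I|}$: the positive indices record where in the forward product a $\Delta_{N^{\#\cS}}[Y_i]$ is chosen, and the negative indices do the same in the adjoint product.

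Next, each such term is encoded as a graph moment. Starting from the test digraph $T$ built from two cycles (one for the adjoint product, one for the non-adjoint) glued at the shared endpoint, with self-loops carrying the deterministic diagonal matrices $\Lambda_{i,j}^{(N)}$, one forms $T_I$ by identifying, for every $i \in I$, the pair of cycle vertices that bracket the corresponding factor. The resulting test digraph $\mathring{T_I}$ is two-edge-connected, and its trace reproduces the chosen term up to the sign $(-1)^{|I|}$. Applying Corollary~\ref{cor:asymptoticexpansion2} to each $\mathring{T_I}$ turns the whole expression into a double sum: the outer sum over $I$, the inner over partitions $\pi \in \mathcal{P}(V(T_I))^{\cS}$ with $\pi_s \geq \rho_{I,s}$ and $\GCC(T_I,\pi,s)$ a tree for every $s$, plus a uniform $O(1/N)$ whose constants depend only on the operator-norm bounds and on $T$.

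The remaining move is to swap the order of summation and exploit M\"obius-like cancellation. Any partition of $V(T_I)$ pulls back uniquely to a partition $\tilde\pi$ of $V(T)$ with $\tilde\pi_s \geq \rho_{I,s}$, and one checks that $\rho_{I,s} = \rho_s \vee \rho_I$ as partitions of $V(T)$, that $(T_I)_{\pi,c}$ is naturally identified with $T_{\tilde\pi,c}$, that $\GCC(T_I,\pi,s)$ agrees with $\GCC(T,\tilde\pi,s)$, and that $\lambda_N(T_I,\pi) = \lambda_N(T,\tilde\pi)$. Once every object is expressed in terms of $T$, the outer sum over $I$ reduces to a sum over subsets of a finite set $J_{\tilde\pi}$ (those indices $\pm i$ whose cycle endpoints are already identified by $\bigwedge_{s \in \cS} \tilde\pi_s$), and the elementary identity $\sum_{I \subseteq J_{\tilde\pi}}(-1)^{|I|} = \delta_{J_{\tilde\pi} = \varnothing}$ kills every remaining term with $J_{\tilde\pi} \neq \varnothing$, yielding the displayed formula after renaming $\tilde\pi$ back to $\pi$.

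The main delicate step is the naturality verification in the translation from $T_I$ to $T$. In particular, the identity $\lambda_N(T_I,\pi) = \lambda_N(T,\tilde\pi)$ requires checking that when several self-loop edges labelled by different $\Lambda_{i,j}^{(N)}$'s collide at a single vertex of $T_I$, the matrix assigned to that vertex in $\mathring{T_I}$ is precisely the product of the colliding $\Lambda_{i,j}^{(N)}$'s, and that this matches the evaluation of $\lambda_N(T,\tilde\pi)$ carried out on the unidentified vertices of $T$. The identification of $(T_I)_{\pi,c}$ with $T_{\tilde\pi,c}$ and of the two graphs of coloured components is analogous but easier since the colouring constrains which edges survive each quotient; once these identifications are in hand, the alternating-sum cancellation and the appeal to Corollary~\ref{cor:asymptoticexpansion2} finish the proof.
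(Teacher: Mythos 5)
Your proposal follows essentially the same route as the paper: expand the squared $L^2$-norm as a trace, parametrize terms by $I\subseteq\{\pm 1,\ldots,\pm k\}$, encode them as traffic moments of the quotients $\mathring{T}_I$ of the two-cycle digraph $T$, apply Corollary~\ref{cor:asymptoticexpansion2}, pull partitions back to $V(T)$ (checking $\rho_{I,s}=\rho_I\vee\rho_s$ and the naturality of $(T_I)_{\pi,c}$, $\GCC$, and $\lambda_N$), swap the order of summation, and kill terms via the alternating-sum identity $\sum_{I\subseteq J_{\tilde\pi}}(-1)^{|I|}=\delta_{J_{\tilde\pi}=\varnothing}$. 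The only quibble is a small slip of phrasing: after swapping, the sum over $I$ is the inner one, not the outer one; the substance of the cancellation argument is otherwise correctly stated.
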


To complete the proof of convergence in $L^2$ for Theorem \ref{thm: permutation model}, it suffices to show that there are no terms in the summation, so that only the $O(1/N)$ term remains.  To this end, we prove the following lemma, which is an analog of \cite[Lemma 4.11]{ACDGM2021} for the more complicated setting of graph products.

\begin{lem} \label{lem: inconsistency}
Let $T$ be the \tdg{} above, and adopt the notation of Theorem~\ref{thm: permutation model}.  Let $\pi \in \mathcal{P}(V(T))^{\cS}$.  Then the following conditions are inconsistent (i.e.\ cannot all be true):
\begin{enumerate}[(1)]
    \item $\pi_s \geq \rho_s$ for all $s$,
    \item $\GCC(T,\pi,s)$ is a tree for all $s$,
    \item $J_\pi = \varnothing$.
\end{enumerate}
\end{lem}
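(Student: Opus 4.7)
My plan is to proceed by contradiction. Assume conditions (1), (2), and (3) all hold simultaneously. I aim to show that for every $i \in [k]$ and every $t \in \cS$, the endpoints $u_{i-1,1}$ and $u_{i,1}$ of block $B_i$ in cycle $C_1$ satisfy $u_{i-1,1} \sim_{\pi_t} u_{i,1}$, and similarly for the primed cycle $C_2$. This yields $u_{i-1,1} \sim_{\bigwedge_s \pi_s} u_{i,1}$ for every $i$, so $J_\pi = [k] \cup (-[k])$, contradicting condition (3).

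If $t \notin \cS_{\chi(i)}$, then every edge of $B_i$ has colour in $\cC \setminus \cC_t$, so the endpoints of $B_i$ lie in a common component of $T|_{\cC \setminus \cC_t}$ and are thus $\rho_t$-equivalent, hence $\pi_t$-equivalent by condition (1). If instead $t \in \cS_{\chi(i)}$, I apply Lemma~\ref{lem:inducedgccpath} to the closed walk around $C_1$ in $T$ (which is automatically a walk in $T/\omegapic$ for any $c \in \cC_t$, since $\omegapic$ is a coarsening of equality on $V(T)$), obtaining a closed walk $W$ in the tree $\GCC(T, \pi, t)$. Since $\GCC(T, \pi, t)$ is a tree, $W$ is null-homotopic and must reduce to the empty walk via repeated cancellations of consecutive inverse edges in the cyclic walk.

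The heart of the argument is analyzing these cancellations. Within the walk-segment arising from a block $B_j$ with $\chi(j) \in \cC_t$, the two consecutive $\GCC$-edges contributed by adjacent edges $X_{j,p}$ and $X_{j,p-1}$ of $B_j$ share their common vertex $[u_{j,p}]_{\pi_{\chi(j)}} \in V(T_{\pi,\chi(j)})$ and so cancel automatically. After these intra-block cancellations, each $\cC_t$-coloured block $B_j$ contributes exactly two ``loose end'' edges to the reduced walk, corresponding to its two endpoints $u_{j-1,1}$ and $u_{j,1}$. These loose ends must further cancel to complete the reduction; since edges of $\GCC(T, \pi, t)$ form the disjoint union $\bigsqcup_{c \in \cC_t} V(T_{\pi,c})$ indexed by colour, same-edge cancellations respect colour. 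The $\cG$-reducedness of $\chi$ prevents two same-colour blocks from being linked by a chain of intermediaries whose colours all commute with (i.e.\ are adjacent in $\cG$ to) the shared colour; consequently, the only way the loose ends can complete the cancellation is for the two loose ends of each block to pair with themselves, yielding $u_{i-1,1} \sim_{\pi_{\chi(i)}} u_{i,1}$, which implies $\sim_{\pi_t}$ because $t \in \cS_{\chi(i)}$ and $\pi_{\chi(i)} = \bigwedge_{s \in \cS_{\chi(i)}} \pi_s$.

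The main obstacle is executing the ``no cross-block bridging'' step rigorously, especially when multiple same-colour blocks appear in $C_1$ and the nested excursion structure of $W$ in the tree becomes complex. I anticipate this requiring an induction on the number of $\cC_t$-coloured blocks in $C_1$, where at each step the reducedness of $\chi$ is used to identify a block whose loose ends cannot be absorbed by its neighbours and must therefore self-pair, peeling off one identification at a time. The parallel analysis for $C_2$ handles the negative indices in $J_\pi$ and completes the contradiction.
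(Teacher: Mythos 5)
Your proposal aims to show that \emph{every} index lands in $J_\pi$, i.e.\ that $u_{i-1,1} \sim_{\pi_t} u_{i,1}$ for all $i$ and all $t$.  This is strictly stronger than what the lemma needs (only $J_\pi \neq \varnothing$), and it does not appear to follow from conditions (1)--(2).  The step you flag as ``the main obstacle'' — the no-cross-block-bridging claim — is precisely where things break.  Consider a reduced closed walk
$v_0 \,\text{--}\, A_{i_1} \,\text{--}\, v_1 \,\text{--}\, A_{i_2} \,\text{--}\, v_2 \,\text{--}\, A_{i_3} \,\text{--}\, v_0$
in the tree $\GCC(T,\pi,t)$.  It can perfectly well collapse by backtracking only at $A_{i_2}$ (so that $v_1 = v_2$), then cancelling across the $v_1, v_0$ junctions so that $A_{i_1} = A_{i_3}$.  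In that scenario only block $i_2$ self-pairs; the loose ends of blocks $i_1$ and $i_3$ pair with \emph{each other}.  $\cG$-reducedness does not rule this out at the level of the single string $t$: reducedness guarantees a colour $\chi(z)$ (with $i_1 < z < i_3$) sharing \emph{some} string $s'$ with $\chi(i_1)$, but $s'$ need not equal $t$, so the obstructing component $A_z$ may not even appear in the walk inside $\GCC(T,\pi,t)$.  Your argument never invokes the string $s'$ for any string other than the one currently under consideration, so it cannot see this obstruction.

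The paper's proof is structured quite differently and only targets the weaker conclusion.  It sets up a dichotomy: either, for some $s$, the closed walk in $\GCC(T,\pi,s)$ backtracks at some coloured-component vertex $A_{i_j}$ — in which case Lemma~\ref{lem: tree injective} upgrades $[u_{i_j,1}]_{\pi_s} = [u_{i_{j+1},1}]_{\pi_s}$ to $u_{i_j,1} \sim_{\pi_{\chi(i_j)}} u_{i_j+1,1}$, putting that single index in $J_\pi$ and finishing the argument — or else, for every $s$, all backtracking happens at $V(\gps)$-vertices, each instance of which forces $A_x = A_y$ for some pair of indices.  Taking such a pair with $y - x$ minimal, $\cG$-reducedness produces an intermediate $z$ and a \emph{different} string $s'$ shared by $\chi(x)$ and $\chi(z)$; the tree $\GCC(T,\pi,s')$ then forces $A_z$ to be visited twice, producing a strictly shorter pair $(z, z')$ and a contradiction.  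Note that the string hops — you stay inside a single $\GCC(T,\pi,t)$ throughout.  Your proposed ``peeling'' induction is also not obviously well-founded: after deleting a self-paired block, the remaining vertices no longer assemble into a valid test digraph of the required two-cycle shape, and the partitions $\pi_s$, the $\GCC$ graphs, and the reducedness hypothesis all need to be re-established for the smaller object, which is not straightforward.  If you want to repair the argument, the key missing idea is the extremal pair $(x,y)$ with $y - x$ minimal combined with the switch to the string $s'$ furnished by reducedness.
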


\begin{proof}[Proof of Lemma \ref{lem: inconsistency}]
  Recall that $T$ is composed of two cycles glued together at one vertex.
  In particular, $T$ is two-edge connected.
  We will in fact focus only on one of these cycles, the one with vertices $u_{1,1}$, \dots, $u_{1,\ell(1)}$ through $u_{k,1}$, \dots, $u_{k,\ell(k)}$.
  Assume that $\pi$ satisfies (1) and (2); we will show that (3) cannot hold.
  For each $i$, let $A_i \in \Comp(T_{\pi,\chi(i)})$ be the component containing $[u_{i,1}]_{\pi_{\chi(i)}}$.

  Let us consider walks induced in the graphs $\GCC(T,\pi,s)$ via Lemma~\ref{lem:inducedgccpath}.
  As the segment $u_{i,1}, \ldots, u_{i, \ell(i)}, u_{i+1,1}$ is all colored $\chi(i)$, the walk it induces in any $\GCC(T,\pi,s)$ for any $s \in \cS_{\chi(i)}$ is
  \[ [u_{i,1}]_{\pi_s}, A_i, [u_{i,2}]_{\pi_s}, A_i, \ldots, [u_{i,\ell(i)}]_{\pi_s}, A_i, [u_{i+1,1}]_{\pi_s}; \]
  in particular, $[u_{i,1}]_{\pi_s}$ and $[u_{i+1,1}]_{\pi_s}$ are both adjacent to $A_i$.
  On the other hand, if $s \notin \cS_{\chi(i)}$, then $[u_{i,1}]_{\pi_s} = [u_{i+1,1}]_{\pi_s}$.

  Suppose $s$ is fixed; then letting $\set{i_1 < \ldots < i_m} = \chi^{-1}(\cC_s)$, we have a walk in $\GCC(T,\pi,s)$ of the form
  \[ [u_{i_1,1}]_{\pi_s}, A_{i_1}, [u_{i_2,i}]_{\pi_s}, A_{i_2}, \ldots, A_{i_m}, [u_{i_1,1}]_{\pi_s}. \]
  Since $\GCC(T,\pi,s)$ is a tree, this walk must double back on itself at some point.
  We will first show that if this happens after visiting some $A_{i_j}$ (i.e., if $[u_{i_j,1}]_{\pi_s} = [u_{i_{j+1},1}]_{\pi_s}$) then $J_\pi$ is non-empty.

  %We first claim that if $J_\pi = \emptyset$, then $[u_{i_j,1}]_{\pi_s} \neq [u_{i_{j+1},1}]_{\pi_s}$ for each $j$.
  %Therefore, suppose $J_\pi$ is empty.
  Fix $j$ and suppose $u_{i_j,1} \sim_{\pi_s} u_{i_{j+1},1}$.
  We have $u_{i_j,1} \sim_{\pi_t} u_{i_j+1,1}$ for each $t \notin \cS_{\chi(i_j)}$ (since they are connected by a $\chi(i_j)$-colored path).
  Meanwhile $u_{i_j+1,1}$ is connected to $u_{i_{j+1},1}$ by a path of colors not in $\cC_s$, so we have $u_{i_j+1,1} \sim_{\pi_s} u_{i_{j+1}, 1}$ and $u_{i_j+1,1} \sim_{\pi_s} u_{i_j,1}$.
  %; in particular, $u_{i_j+1,1} \sim_{\pi_{\chi(i_j)}} u_{i_{j+1}, 1} \sim_{\pi_{\chi(i_j)}} u_{i_j,1}$.
  Now $\GCC(T,\pi,s)$ is a tree, so by Lemma~\ref{lem: tree injective} we have $\hsc[\chi(i_j)]$ is injective on $A_{i_j}$; since $A_{i_j}$ contains both $[u_{i_j,1}]_{\pi_{\chi(i_j)}}$ and $[u_{i_j+1,1}]_{\pi_{\chi(i_j)}}$, it follows that they are equal.
  But if $u_{i_j,1} \sim_{\pi_{\chi(i_j)}} u_{i_j+1,1}$ then we have $u_{i_j,1} \sim_{\pi_t} u_{i_j+1,1}$ for each $t \in \cS_{\chi(i_j)}$ (since $\pi_{\chi(i_j)} \leq \pi_t$ for each $t$).
  Thus $i_j \in J_\pi$.

  So now let us suppose that the walks in $\GCC(T,\pi,s)$ above do not backtrack immediately after visiting vertices coming from colored components.
  Then each walk is either length zero, or backtracks after visiting a vertex coming from some $\Tpic$; since $A_1$ is present in at least one of these walks, they cannot all be empty and at least one backtracking must occur.
  Let $\mathcal{I} \subset [k]\times[k]$ be the set of all tuples $(x, y)$ for which $x < y$ and $A_x = A_y$.
  This set is non-empty as just noted, so we may take $(x, y) \in \mathcal{I}$ with $y-x$ minimal.
  Since $\chi(1)\cdots\chi(k)$ is $G$-reduced, there is some $x < z < y$ so that $\chi(x)$ and $\chi(z)$ share a common string $s \in \cS$.
  Recalling the notation of the previous paragraph, let $\set{i_1 < \ldots < i_m} = \chi^{-1}(\cC_s)$.

  Some sub-path of
  \[ [u_{i_1,1}]_{\pi_s}, A_{i_1}, [u_{i_2,i}]_{\pi_s}, A_{i_2}, \ldots, A_{i_m}, [u_{i_1,1}]_{\pi_s} \]
  visits $A_x$, then $A_z$, then $A_y = A_x$ again.
  This sub-path cannot backtrack immediately after visiting $A_z$ for the first time.
  However it does return to $A_y$, and so must pass through $A_z$ again (for $\GCC(T,\pi,s)$ is a tree); then there is some $z < z' < y$ so that $A_z = A_{z'}$.
  But then $(z, z') \in \mathcal{I}$ and $z'-z < y-x$, contradicting the choice of $(x,y)$.
\end{proof}

\subsection{Almost sure convergence} \label{subsec: almost sure}

In this section, we upgrade from convergence in probability to almost sure convergence and complete the proof of Theorem~\ref{thm: permutation model}.  We use the following concentration result due to Maurey \cite{MaureyPerm} and is given in \cite[\S 7.5-7.6]{MS1986}.  See also \cite[Lemma 4.6]{DKP2014}.

\begin{lem} \label{lem: concentration}
Consider the symmetric group $S_N$ with the metric
\[
d(\sigma,\pi) = \frac{1}{N} \#\{i: \sigma(i) \neq \pi(i) \}.
\]
Let $\Sigma$ be a uniformly random element of $S_N$.  Let $f: S_N \to \bR$ be $L$-Lipschitz.  Then
\[
P(|f(\Sigma) - \mathbb{E} f(\Sigma)| \geq \epsilon) \leq 2e^{-N \epsilon^2/ 64 L^2}.
\]
\end{lem}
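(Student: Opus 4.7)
The plan is to establish this concentration inequality via a bounded-differences (Doob) martingale argument combined with a transposition coupling; this is the standard route for sharp concentration on $S_N$ equipped with normalized Hamming distance. I would not use log-Sobolev or transportation-inequality machinery, as the martingale method is more elementary and suffices for an exponential decay of the prescribed form.

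First, I would set up the Doob filtration $\mathcal{F}_i = \sigma(\Sigma(1), \ldots, \Sigma(i))$ and the associated martingale $M_i = \mathbb{E}[f(\Sigma) \mid \mathcal{F}_i]$, so that $M_0 = \mathbb{E} f(\Sigma)$ and $M_N = f(\Sigma)$. The whole argument reduces to bounding the increments $|M_i - M_{i-1}|$ uniformly by $2L/N$. To obtain this, I would use the following transposition coupling. Conditional on a prefix $\Sigma(1) = a_1, \ldots, \Sigma(i-1) = a_{i-1}$, the conditional law of $\Sigma$ is uniform on permutations extending the prefix. For any two admissible values $b \neq b'$ of $\Sigma(i)$, given a permutation $\pi$ extending the prefix with $\pi(i) = b$, let $j > i$ be the unique index with $\pi(j) = b'$ and form $\pi'$ by swapping the entries of $\pi$ at positions $i$ and $j$. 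This yields a measure-preserving bijection between the two conditional laws, and $d(\pi, \pi') \leq 2/N$. By the Lipschitz hypothesis, the two conditional expectations of $f$ differ by at most $2L/N$, which gives $|M_i - M_{i-1}| \leq 2L/N$ for every $i$.

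Azuma--Hoeffding then yields
\[ \mathbb{P}\bigl(|f(\Sigma) - \mathbb{E} f(\Sigma)| \geq \epsilon\bigr) \leq 2 \exp\!\left(-\frac{\epsilon^2}{2 \sum_{i=1}^N (2L/N)^2}\right) = 2 \exp\!\left(-\frac{N \epsilon^2}{8 L^2}\right), \]
which is stronger than the stated bound (with constant $8$ in the exponent rather than $64$). The main obstacle—though not a real one for our application—is matching the constant $64$ appearing in the reference of Milman--Schechtman; the discrepancy simply reflects that the standard references allow for looser accounting (for instance, slightly wasteful coupling estimates or conservative martingale tail bounds), and since only exponential decay in $N$ is needed to push convergence in probability to almost sure convergence via Borel--Cantelli, I would simply cite the result in the form stated.
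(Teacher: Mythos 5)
The paper does not prove this lemma at all; it cites Maurey \cite{MaureyPerm}, Milman--Schechtman \cite[\S 7.5--7.6]{MS1986}, and Dykema--Kerr--Pichot \cite[Lemma 4.6]{DKP2014} and moves on. Your martingale-plus-transposition-coupling argument is correct and is in fact the classical proof underlying those references: the Doob filtration by $\Sigma(1),\dots,\Sigma(i)$, the observation that swapping the positions at which $b$ and $b'$ appear gives a measure-preserving bijection between the two conditional laws moving only two coordinates (so the two conditional means differ by at most $2L/N$), and then Azuma--Hoeffding. Your accounting is also right: with increments bounded by $2L/N$ one gets $2\exp(-N\epsilon^2/8L^2)$, which is strictly stronger than the constant $64$ quoted in the lemma (the extra slack in the cited form typically comes from passing through the median rather than working directly with the mean). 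Since the lemma is only used to feed Borel--Cantelli, any exponential-in-$N$ bound suffices, so either citing or giving this short proof is appropriate.
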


%As a consequence, we can upgrade the convergence in Theorem \ref{thm: permutation model} to almost sure convergence.

%\begin{cor} \label{cor: almost sure}
%In Theorem \ref{thm: permutation model}, the convergence in probability is in fact almost sure convergence.
%\end{cor}

With this in hand, we are ready to complete the proof of Theorem~\ref{thm: permutation model}.

\begin{proof}[Proof of Theorem~\ref{thm: permutation model}]
Fix $N$.  Let $f_N: \prod_{c \in \cC} S_{N^{\# \cS_c}} \to \bR$ be the function mapping a tuple of permutations $(\Sigma_c^{(N)})_{c \in \cC}$ to
\[
\norm{\Delta_{N^{\#\cS}}[(Y_1^{(N)} - \Delta_{N^{\#\cS}}[Y_1^{(N)}]) \dots (Y_k^{(N)} - \Delta_{N^{\#\cS}}[Y_k^{(N)}])]}_2^2.
\]
Here the dependence on $\Sigma_c^{(N)}$ occurs through the terms $\uu{X}_{i,j}^{(N)}$. As before, let $\sigma_c\in S_N$ denote the permutation corresponding to $\Sigma_c^{(N)}$. We claim that $f_N$ is Lipschitz in each argument $\Sigma_c^{(N)}$ with Lipschitz constant independent of $N$.  To see this, first note that
    \[
        \|\Sigma_c^{(N)} - \Sigma_{c'}^{(N)}\|_1 \leq \| \Sigma_c^{(N)} - \Sigma_{c'}^{(N)}\|_2 = \sqrt{2} d(\sigma_c, \sigma_{c'})^{\frac12}.
    \]
%the non-commutative $L^1$ norm $\norm{\sigma - \pi}_1$ of the difference between permutations is bounded above by twice the distance $d(\sigma,\pi)$ from Lemma \ref{lem: concentration}; one can check this by writing $\sigma - \pi$ as the sum of matrices with only one nonzero column each, where the columns correspond to the indices $i$ with $\sigma(i) \neq \pi(i)$.
Next, because the $\Lambda_{i,j}^{(N)}$'s and the $X_{i,j}^{(N)}$'s are assumed to be uniformly bounded in operator norm, the mapping from $\Sigma_c^{(N)}$ to $Y_i^{(N)}$ is Lipschitz with respect to the $d$ metric in the domain and the $L^1$ norm in the target space. 
For all $A,C\in M_{N^{\# S}}(\bC)$ we have:
\begin{equation}\label{eqn: duality 1}
\sup_{B\in M_{N^{\# \cS}}(\bC):\|B\|\leq 1}|\tr(AB)|=\|A\|_{1}, \textnormal{ and } \tr(\Delta_{N^{\#\cS}}(A)C)=\tr(A\Delta_{N^{\#\cS}}(C)).
\end{equation}
Using (\ref{eqn: duality 1}) and $\|\Delta_{N^{\#S}}(C)\|\leq \|C\|$ for all $C\in M_{N^{\#S}}(\bC)$, we obtain that $\Delta_{N^{\# \cS}}$ is contractive in $L^{1}$.
Thus, we can also say that $Y_i^{(N)} - \Delta_{N^{\#\cS}}(Y_i^{(N)})$ depends on $\Sigma_c^{(N)}$ in a Lipschitz manner.  
Finally, again using the uniform boundedness in operator norm, we get Lipschitzness of $\Delta_{N^{\#\cS}}[(Y_1^{(N)} - \Delta_{N^{\#\cS}}[Y_1^{(N)}]) \dots (Y_k^{(N)} - \Delta_{N^{\#\cS}}[Y_k^{(N)}])]$.  Similarly,
\begin{multline*}
\tr_{N^{\#\cS}} \left[ \Delta_{N^{\#\cS}}[(Y_1^{(N)} - \Delta_{N^{\#\cS}}[Y_1^{(N)}]) \dots (Y_k^{(N)} - \Delta_{N^{\#\cS}}[Y_k^{(N)}])]^*\cdot\right.\\
\left. \Delta_{N^{\#\cS}}[(Y_1^{(N)} - \Delta_{N^{\#\cS}}[Y_1^{(N)}]) \dots (Y_k^{(N)} - \Delta_{N^{\#\cS}}[Y_k^{(N)}])] \right]
\end{multline*}
is Lipschitz in $\Sigma_c^{(N)}$ with Lipschitz constant $L_c$ independent of $N$.

Now, let $c_1$, \dots, $c_m$ be the colors in $\cC$.  Let $\mathbb{E}_{c_j}$ be the conditional expectation obtained by integrating out the $\Sigma_{c_j}$.  Since the $\Sigma_{c_j}$'s are independent, by Lemma \ref{lem: concentration} we have
\[
P(|f(\Sigma_{c_1}^{(N)},\dots,\Sigma_{c_m}^{(N)}) - \mathbb{E}_{c_1}[f(\Sigma_{c_1}^{(N)},\dots,\Sigma_{c_m}^{(N)})]| \geq \epsilon/m) \leq 2e^{-N^{\# \cS_{c_1}}\epsilon^2/64m^2L_{c_1}^2}.
\]
Independence also implies that $\mathbb{E}_{c_1}[f(\Sigma_{c_1}^{(N)},\dots,\Sigma_{c_m}^{(N)})]$ is $L_{c_2}$-Lipschitz with respect to $\Sigma_{c_2}^{(N)}$.  Hence, we can apply the concentration estimate for $\Sigma_{c_2}^{(N)}$ to this function.  Continuing this procedure iteratively, we obtain
\[
P(|f_N(\Sigma_{c_1}^{(N)},\dots,\Sigma_{c_m}^{(N)}) - \mathbb{E}[f_N(\Sigma_{c_1}^{(N)},\dots,\Sigma_{c_m}^{(N)})]| \geq \epsilon) \leq 2 \sum_{j=1}^m e^{-N^{\# \cS_{c_j}}\epsilon^2/64m^2L_{c_j}^2}.
\]
The right-hand side is summable in $N$, and so the Borel-Cantelli lemma implies that
\[
\lim_{N \to \infty} |f_N(\Sigma_{c_1}^{(N)},\dots,\Sigma_{c_m}^{(N)}) - \mathbb{E}[f_N(\Sigma_{c_1}^{(N)},\dots,\Sigma_{c_m}^{(N)})]| = 0
\]
almost surely.
%Now Theorem~\ref{thm: permutation model} showed that
Now in \S\ref{subsec: asym graph ind over diag} we showed that
\[
\lim_{N \to \infty} \mathbb{E}[f_N(\Sigma_{c_1}^{(N)},\dots,\Sigma_{c_m}^{(N)})] = 0.
\]
Hence, $f_N(\Sigma_{c_1}^{(N)},\dots,\Sigma_{c_m}^{(N)}) \to 0$ almost surely as desired.
\end{proof}

\section{Application to sofic groups} \label{subsec: sofic}

First, we recall the original definition of soficity.  For permutations $\sigma$ and $\tau$ in $S_N$, the \emph{Hamming distance} is given by
\[
d_{\operatorname{Hamm}}(\sigma,\tau) = \frac{1}{N} \# \{j: \tau(j) \neq \sigma(j)\}.
\]
If $\Sigma$ and $T$ are the permutation matrices representing $\sigma$ and $\tau$, then one can easily check that
\[
d_{\operatorname{Hamm}}(\sigma,\tau) = \tr_N(I - \Sigma^* T).
\]

\begin{defn}
A countable group $\Gamma$ is said to be \emph{sofic} if there exists a sequence of natural numbers $N_k \to \infty$ and a sequence of maps $\sigma_k: \Gamma \to S_{N_k}$ such that
\begin{equation} \label{eq: sofic assumption 1}
\lim_{k \to \infty} d_{\operatorname{Hamm}}(\sigma_k(g) \sigma_k(h), \sigma_k(gh)) = 0 \text{ for } g, h \in \Gamma
\end{equation}
and
\begin{equation} \label{eq: sofic assumption 2}
\lim_{k \to \infty} d_{\operatorname{Hamm}}(\sigma_k(g),\id) = 1 - \delta_{g = e} \text{ for } g \in \Gamma.
\end{equation}
We call such a sequence $(\sigma_k)_{k \in \bN}$ an \emph{asymptotic homomorphism into permutation groups}.
\end{defn}

This definition can be restated in terms of matrices as follows.

\begin{prop}[Folklore] \label{prop: sofic}
Let $\Gamma$ be a countable group generated by $(g_j)_{j \in \bN}$; for notational convenience let $g_{-j} = g_j^{-1}$ for $j \in \bN$.  Then $\Gamma$ is sofic if and only if there exist permutation matrices $(T_j^{(N)})_{j,N \in \bN}$ such that for all $m \in \bN$ and $j(1)$, \dots, $j(m) \in \bZ \setminus \{0\}$, we have
\begin{equation} \label{eq: sofic matrix version}
\lim_{N \to \infty} \tr_N(T_{j(1)}^{(N)} \dots T_{j(m)}^{(N)}) = \delta_{g_{j(1)} \dots g_{j(m)}=e},
\end{equation}
where we also write $T_{-j}^{(N)} = (T_j^{(N)})^{-1}$.  Furthermore, it suffices for there to exist some matrices $T_j^{(N)}$ for a subsequence of natural numbers $N(k) \to \infty$.
\end{prop}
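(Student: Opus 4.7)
The plan is to exploit the identity
\[
d_{\operatorname{Hamm}}(\sigma, \tau) = 1 - \tr_N(P_\tau^* P_\sigma)
\]
for permutations $\sigma, \tau \in S_N$ with matrices $P_\sigma, P_\tau$ (a direct rewriting of $d_{\operatorname{Hamm}}(\sigma,\tau) = \tr_N(I - \Sigma^* T)$ recorded just before the proposition), together with left and right invariance of $d_{\operatorname{Hamm}}$ under multiplication by a permutation. These two facts translate both soficity conditions entirely into trace asymptotics, making each direction essentially a bookkeeping exercise.

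For the forward direction I would start from an asymptotic homomorphism $\sigma_k : \Gamma \to S_{N_k}$ and set $T_j^{(N_k)} = P_{\sigma_k(g_j)}$ for $j \in \bN$, so that $T_{-j}^{(N_k)} = (T_j^{(N_k)})^{-1}$ is the matrix of $\sigma_k(g_j)^{-1}$. The first step is an induction on $m$, using right-invariance of $d_{\operatorname{Hamm}}$ together with \eqref{eq: sofic assumption 1}, to show
\[
\lim_{k \to \infty} d_{\operatorname{Hamm}}\paren{\sigma_k(h_1) \cdots \sigma_k(h_m),\, \sigma_k(h_1 \cdots h_m)} = 0
\]
for any $h_1, \ldots, h_m \in \Gamma$. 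One also checks that $\sigma_k(g_j)^{-1}$ and $\sigma_k(g_j^{-1})$ are asymptotically Hamming-close: their product is close to $\sigma_k(e)$ by \eqref{eq: sofic assumption 1}, which is close to $\id$ by \eqref{eq: sofic assumption 2}. Combining the two, $T_{j(1)}^{(N_k)} \cdots T_{j(m)}^{(N_k)}$ is Hamming-close to $P_{\sigma_k(g_{j(1)} \cdots g_{j(m)})}$, whose trace tends to $\delta_{g_{j(1)} \cdots g_{j(m)} = e}$ by \eqref{eq: sofic assumption 2}. The ``furthermore'' clause permits us to work along the subsequence $(N_k)$.

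For the converse I would choose for each $g \in \Gamma$ a word $w_g = g_{j_1(g)} \cdots g_{j_{m(g)}(g)}$ representing $g$ in the generators (with $w_e$ empty) and let $\sigma_N(g)$ be the permutation whose matrix is $T_N(g) := T_{j_1(g)}^{(N)} \cdots T_{j_{m(g)}(g)}^{(N)}$. The identity immediately gives
\[
d_{\operatorname{Hamm}}(\sigma_N(g), \id) = 1 - \tr_N(T_N(g)) \longrightarrow 1 - \delta_{g=e},
\]
which is \eqref{eq: sofic assumption 2}. For \eqref{eq: sofic assumption 1}, the identity yields
\[
d_{\operatorname{Hamm}}(\sigma_N(g) \sigma_N(h), \sigma_N(gh)) = 1 - \tr_N\paren{T_N(gh)^* T_N(g) T_N(h)};
\]
expanding the adjoint via $T_{-j}^{(N)} = (T_j^{(N)})^{-1}$ realizes the argument of the trace as $T_{j'(1)}^{(N)} \cdots T_{j'(r)}^{(N)}$ with $g_{j'(1)} \cdots g_{j'(r)} = (gh)^{-1} \cdot g \cdot h = e$ in $\Gamma$, so the trace tends to $1$ by hypothesis.

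The only genuinely technical step I anticipate is the inductive lemma in the forward direction, where the pairwise approximation of \eqref{eq: sofic assumption 1} has to be promoted to arbitrary products; right-invariance of $d_{\operatorname{Hamm}}$ and the triangle inequality make this routine. The converse and the ``furthermore'' clause then follow by direct computation, since both arguments only require the trace convergence along whichever subsequence of $N$'s is available.
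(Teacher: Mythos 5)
Your converse direction and your handling of the ``furthermore'' clause match the paper's approach and are correct. Your forward direction, however, has a genuine gap: the proposition asks for matrices $(T_j^{(N)})_{j,N\in\bN}$ for \emph{every} $N\in\bN$, while your construction $T_j^{(N_k)} = P_{\sigma_k(g_j)}$ only produces them along the subsequence $(N_k)$ coming from the sofic approximation of $\Gamma$. Invoking the ``furthermore'' clause does not repair this, because that clause runs in the opposite direction: it says that matrices along a subsequence already \emph{imply} soficity, not that soficity lets you settle for a subsequence in the forward implication you are trying to prove. Using it here would be circular, since you would be assuming part of the statement you still need to establish. The paper closes the gap with a block-diagonal tiling argument: pick $k(N)\to\infty$ with $n(k(N))/N\to 0$, write $N = q(N)\,n(k(N)) + r(N)$, and set $T_j^{(N)} = \Sigma_{k(N)}(g_j)^{\oplus q(N)} \oplus I_{r(N)}$; one then checks that the identity block contributes an error of order $r(N)/N \to 0$ to the normalized trace, so that \eqref{eq: sofic matrix version} holds for every $N$. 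You should insert this step.

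Apart from the missing tiling step, your proposal is essentially the paper's argument. One small point in your favor: you explicitly verify that $\sigma_k(g_j)^{-1}$ and $\sigma_k(g_j^{-1})$ are asymptotically Hamming-close (via $\sigma_k(g_j)\sigma_k(g_j^{-1})$ being close to $\sigma_k(e)$, which is close to $\id$). This is needed to reconcile the proposition's convention $T_{-j}^{(N)} = (T_j^{(N)})^{-1}$ with the sofic approximation's values on $g_j^{-1}$, and the paper's write-up elides it; your argument for it is correct.
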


\begin{proof}
First, assume that $\Gamma$ is sofic, and let $\sigma_k: \Gamma \to S_{n(k)}$ be an asymptotic homomorphism.  First, to translate the conditions \eqref{eq: sofic assumption 1} and \eqref{eq: sofic assumption 2} into statements about words in the generators, we will show that for any $j(1)$, \dots, $j(m) \in \bZ \setminus \{0\}$, we have
\[
\lim_{k \to \infty} d_{\operatorname{Hamm}}(\sigma_k(g_{j(1)}) \dots \sigma_k(g_{j(m)}),\id) = 1 - \delta_{g_{j(1)} \dots g_{j(m)}=e}.
\]
To see this, note that asymptotic multiplicativity implies that 
\[\lim_{k\to\infty}d_{\operatorname{Hamm}}(\sigma_k(g_{j(1)}) \dots \sigma_k(g_{j(m)}),\sigma_{k}(g_{j(1)}\cdots g_{j(m)})) =0\]
for all $j(1),\cdots,j(m)\in \bZ\setminus\{0\}$ (by inducting on $m$ and applying the triangle inequality and invariance of the Hamming distance). Thus our claim is equivalent to the statement that
\[
\lim_{k \to \infty} d_{\operatorname{Hamm}}(\sigma_k(g_{j(1)}g_{j(2)}\cdots g_{j(m)}),\id) = 1 - \delta_{g_{j(1)} \dots g_{j(m)}=e},
\]
which is true by \eqref{eq: sofic assumption 2}.
% The reason for this is that
% \begin{multline*}
% |d_{\operatorname{Hamm}}(\sigma_k(g_{j(1)}) \dots \sigma_k(g_{j(m)})) - \delta_{g_{j(1)} \dots g_{j(m)}=e}| \\
% \leq |d_{\operatorname{Hamm}}(\sigma_k(g_{j(1)} \dots g_{j(m)})) - (1 - \delta_{g_{j(1)} \dots g_{j(m)}=e})| + d_{\operatorname{Hamm}}(\sigma_k(g_{j(1)}) \dots \sigma_k(g_{j(m)}), \sigma_k(g_{j(1)} \dots g_{j(m)})).
% \end{multline*}
% The first term on the right-hand side goes to zero by assumption \eqref{eq: sofic assumption 2} while the second term can be estimated by the triangle inequality and invariance of the Hamming distance as
% \begin{align*}
% d_{\operatorname{Hamm}}(&\sigma_k(g_{j(1)}) \dots \sigma_k(g_{j(m)}), \sigma_k(g_{j(1)} \dots g_{j(m)})) \\
% &\leq \sum_{m'=1}^{m-1} d_{\operatorname{Hamm}}(\sigma_k(g_{j(1)} \dots g_{j(m'-1)}) \sigma_k(g_{j(m')}) \dots \sigma_k(g_{j(m)}), \sigma_k(g_{j(1)} \dots g_{j(m')}) \sigma_k(g_{j(m'+1)}) \dots \sigma_k(g_{j(m)})) \\
% &= \sum_{m'=1}^{m-1} d_{\operatorname{Hamm}}(\sigma_k(g_{j(1)} \dots g_{j(m'-1)}) \sigma_k(g_{j(m')}), \sigma_k(g_{j(1)} \dots g_{j(m')})),
% \end{align*}
% which goes to zero by assumption \eqref{eq: sofic assumption 1}.

Second, we want to arrange that the permutation models have size exactly $N$ rather than $n_k$.  Fix a sequence $k(N)$ such that $\lim_{N \to \infty} k(N) = \infty$ and $\lim_{k \to \infty} n(k(N)) / N = 0$.  Write $N = q(N) n(k(N)) + r(N)$ for integers $q(N)$ and $r(N) \in \{0,\dots, n(k(N)) - 1\}$.  Let $\Sigma_k(g)$ be the matrix associated to the permutation $\sigma_k(g)$, and let $T_j^{(N)}$ be the block diagonal permutation matrix
\[
T_j^{(N)} = \Sigma_k(g_j)^{\oplus q(N)} \oplus I_{r(N)} = \underbrace{\Sigma_k(g_j) \oplus \dots \oplus \Sigma_k(g_j)}_{q(N)} \oplus I_{r(N)}.
\]
Let $j(1)$, \dots, $j(m)$ be given and note
\begin{align*}
\Tr_N(T_{j(1)}^{(N)} \dots T_{j(m)}^{(N)}) &= \Tr_N(\Sigma_{k(N)}(g_{j(1)})^{\oplus q(N)} \dots \Sigma_{k(N)}(g_{j(m)}) \oplus I_{r(N)}) \\
&= q(N) \Tr_{n(k(N))}(\Sigma_{k(N)}(g_{j(1)})^{\oplus q(N)} \dots \Sigma_{k(N)}(g_{j(m)})) + r(N) \\
&= q(N) n(k(N)) (1 - d_{\operatorname{Hamm}}(\sigma_{k(N)}(g_{j(1)})) \dots \sigma_{k(N)}(g_{j(m)}),\id) + r(N),
\end{align*}
and hence
\begin{align*}
\tr_N(T_{j(1)}^{(N)} \dots T_{j(m)}^{(N)}) &= \frac{q(N) n(k(N))}{N} (1 - d_{\operatorname{Hamm}}(\sigma_{k(N)}(g_{j(1)}) \dots \sigma_{k(N)}(g_{j(m)}),\id)) + \frac{r(N)}{N} \\
&= 1 - d_{\operatorname{Hamm}}(\sigma_{k(N)}(g_{j(1)}) \dots \sigma_{k(N)}(g_{j(m)}),\id) \\
& \quad + \frac{r(N)}{N} d_{\operatorname{Hamm}}(\sigma_{k(N)}(g_{j(1)}) \dots \sigma_{k(N)}(g_{j(m)}),\id).
\end{align*}
Since $r(N) < n(N)$, we have $r(N) / N \to 0$, and therefore,
\[
\lim_{N \to \infty} \tr_N(T_{j(1)}^{(N)} \dots T_{j(m)}^{(N)}) = \delta_{g_{j(1)} \dots g_{j(m)} = e}.
\]

Conversely, assume we are given $T_j^{(N)}$ satisfying \eqref{eq: sofic matrix version}.  Let $\sigma_{N,j} \in S_N$ be the permutation corresponding to $T_j^{(N)}$.  First, note that for two words $g_{j(1)} \dots g_{j(m)}$ and $g_{j'(1)} \dots g_{j'(m')}$, we have
\begin{align*}
d_{\operatorname{Hamm}}(\sigma_{N,j(1)} \dots \sigma_{N,j(m)}, \sigma_{N,j'(1)} \dots \sigma_{N,j'(m')}) &= \tr_N(I - T_{-j(m)}^{(N)} \dots T_{-j(1)}^{(N)} T_{j'(1)}^{(N)} \dots T_{j'(m')}^{(N)}) \\
&= 1 - \delta_{g_{-j(m)} \dots g_{-j(1)} g_{j'(1)} \dots g_{j'(m')}=e} \\
&= 1 - \delta_{g_{j(1)} \dots g_{j(m)}=g_{j'(1)} \dots g_{j'(m')}}
\end{align*}
For each group element $g$, choose an expression of $g$ as a product of generators $g = g_{j(1)} \dots g_{j(m)}$, and define $\sigma_N(g)$ as the permutation given by the matrix $T_{j(1)}^{(N)} \dots T_{j(m)}^{(N)}$.  Applying the previous equation with $g_{j'(1)} \dots g_{j'(m')}$ equal to the empty word shows that \eqref{eq: sofic assumption 2} holds.  To show \eqref{eq: sofic assumption 1}, note that multiplying our chosen expressions for $g$ and $h$ as products of the generators gives one possible expression for $gh$ as a product of the generators, which does not necessarily agree with the expression for $gh$ as a product of the generators that we chose at the beginning.  But the previous equation implies that the Hamming distance between two expressions for $gh$ as a product of the generators converges to zero, and so \eqref{eq: sofic assumption 1} holds.  For the last claim, note that the proof of the converse direction works equally well with a subsequence $N(k)$ rather than all natural numbers.
\end{proof}

Now we explain how Theorem \ref{thm: permutation model} implies that soficity is preserved by graph products of group.

\begin{proof}[Proof of Proposition \ref{prop: sofic graph product}]
Let $\mathcal{G}$ be given and let $(\Gamma_c)_{c \in \mathcal{C}}$ be countable sofic groups.  For each $c \in \cC$, fix a generating set $(g_{c,j})_{j \in \cN}$ for the group $\Gamma_c$, and fix a sequence of permutation models $(T_{c,j}^{(N)})_{j,N \in \bN}$ as in Proposition \ref{prop: sofic}.  Fix an assignment of strings $\cS$ and uniformly random permutations $\Sigma_c^{(N)}$ as in Theorem \ref{thm: permutation model}.  For each $c \in \cC$, fix an identification $M_{N^{\#\cS_c}}(\bC) \cong \bigotimes_{s \in \cS_c} M_N(\bC)$ in the usual way using decompositions into block matrices.  Let
\[
Z_{c,j}^{(N)} = (\Sigma_c^{(N)})^* T_{c,j}^{(N^{\# \cS_c})} \Sigma_c^{(N)} \otimes I_{N^{\# \S \setminus \cS_c}} \in \bigotimes_{s \in \cS_c} M_N(\bC) \otimes I_{N^{\# \S \setminus \cS_c}} \subseteq \bigotimes_{s \in \cS} M_N(\bC).
\]
As usual, write $Z_{c,-j}^{(N)} = (Z_{c,j}^{(N)})^*$.  We claim that almost surely $(Z_{c,j}^{(N)})_{c \in \cC, j \in \bN}$ is a sequence of $N^{\# \cS} \times N^{\# \cS}$ permutation approximations for the generating set $(g_{c,j})_{c \in \cC, j \in \bN}$ of the graph product group $\Gamma = \gp_{c \in \mathcal{G}} \Gamma_c$.  Thus, we must show that almost surely
\begin{equation} \label{eq: soficity goal}
\lim_{N \to \infty} \tr_{N^{\# S}}[Z_{\chi(1),j(1)}^{(N)} \dots Z_{\chi(m),j(m)}^{(N)}] = \delta_{g_{\chi(1),j(1)} \dots g_{\chi(m),j(m)} = e}.
\end{equation}
We proceed by induction on $m$.  First, we group together adjacent terms where the color $\chi(i)$ is the same.  Moreover, since $Z_{c,j}^{(N)}$ commutes with $Z_{c',j'}^{(N)}$ when $c \sim c'$ in $\mathcal{G}$, we may apply commutation operations and group terms together so that any two separated groups of terms from some color $c$ must have some term from $c'$ between them where $c'$ does not commute with $c$.  Thus, after suitable changes notation, the original product can be rewritten as
\[
(Z_{\chi(1),j(1,1)}^{(N)} \dots Z_{\chi(1),j(1,\ell(1))}^{(N)}) \dots (Z_{\chi(k),j(k,1)}^{(N)} \dots Z_{\chi(1),j(k,\ell(k))}^{(N)})
\]
where $m = \ell(1) + \dots + \ell(k)$ and where $\chi(1)$, \dots, $\chi(k)$ forms a $\mathcal{G}$-reduced word.

Now we consider two cases.  First, suppose there is some $i$ such that $g_{\chi(i),j(i,1)} \dots g_{\chi(i),j(i,\ell(i))} = e$ in $G_{\chi(i)}$.  Then by our assumption,
\[
\lim_{N \to \infty} \tr_N[T_{\chi(i),j(i,1)}^{(N)} \dots T_{\chi(i),j(i,\ell(i))}^{(N)}] = 1.
\]
Since $T_{\chi(i),j(i,1)}^{(N)} \dots T_{\chi(i),j(i,\ell(i))}^{(N)}$ is a permutation matrix, hence also a unitary matrix with real trace, we have
\[
\norm{I_N - T_{\chi(i),j(i,1)}^{(N)} \dots T_{\chi(i),j(i,\ell(i))}^{(N)}}_2^2 = 2 - 2 \tr_N[T_{\chi(i),j(i,1)}^{(N)} \dots T_{\chi(i),j(i,\ell(i))}^{(N)}] \to 0.
\]
Now $Z_{\chi(i),j(i,i')}$ is obtained by conjugation with $\Sigma_{\chi(i)}^{(N)}$ and tensoring with the identity matrix, and so also
\[
\norm{I_{N^{\# \cS}} - Z_{\chi(i),j(i,1)}^{(N)} \dots Z_{\chi(i),j(i,\ell(i))}^{(N)}}_2^2 = \norm{I_N - T_{\chi(i),j(i,1)}^{(N)} \dots T_{\chi(i),j(i,\ell(i))}^{(N)}}_2^2 \to 0.
\]
In the larger product, $(Z_{\chi(1),j(1,1)}^{(N)} \dots Z_{\chi(1),j(1,\ell(1))}^{(N)}) \dots (Z_{\chi(k),j(k,1)}^{(N)} \dots Z_{\chi(1),j(k,\ell(k))}^{(N)})$, all the terms are unitary, hence bounded by $1$ in operator norm.  Thus, if we swap out $Z_{\chi(i),j(i,1)}^{(N)} \dots Z_{\chi(i),j(i,\ell(i))}^{(N)}$ for $I_{N^{\# \cS}}$, the error is bounded in $2$-norm by $\norm{I_{N^{\# \cS}} - Z_{\chi(i),j(i,1)}^{(N)} \dots Z_{\chi(i),j(i,\ell(i))}^{(N)}}_2$, and hence the difference in the trace also approaches zero.  Therefore, it suffices to prove \eqref{eq: soficity goal} for the product of terms $(Z_{\chi(1),j(1,1)}^{(N)} \dots Z_{\chi(1),j(1,\ell(1))}^{(N)}) \dots (Z_{\chi(k),j(k,1)}^{(N)} \dots Z_{\chi(1),j(k,\ell(k))}^{(N)})$ with $Z_{\chi(i),j(i,1)}^{(N)} \dots Z_{\chi(i),j(i,\ell(i))}^{(N)}$ removed.  This is a product with a strictly smaller total number of terms (the original number of terms being $m = \ell(1) + \dots \ell(k)$).  Therefore, we can apply the induction hypothesis.

Hence, it remains to handle the case when $g_{\chi(i),j(i,1)} \dots g_{\chi(i),j(i,\ell(i))} \neq e$ for every $i$.  Here we apply Theorem \ref{thm: permutation model} with
\begin{align*}
\uu{X}_{i,i'}^{(N)} &= Z_{\chi(i),j(i,i')} \\
\Lambda_{i,i'}^{(N)} &= I_{N^{\# \cS}} \\
Y_i^{(N)} &= Z_{\chi(i),j(i,1)}^{(N)} \dots Z_{\chi(i),j(i,\ell(i))}^{(N)},
\end{align*}
and thus obtain
\begin{equation} \label{eq: product limit equation}
  \lim_{N \to \infty} \norm{\Delta_{N^{\#\cS}}[(Y_1^{(N)} - \Delta_{N^{\#\cS}}[Y_1^{(N)}]) \dots (Y_k^{(N)} - \Delta_{N^{\#\cS}}[Y_k^{(N)}])]}_2 = 0 \text{ almost surely.}
\end{equation}
Now recall $Y_i^{(N)}$ is a permutation matrix and $\lim_{N \to \infty} \tr_{N^{\# \cS}}(Y_i^{(N)}) = 0$ because $Y_i^{(N)}$ corresponds to a non-identity word in $G_{\chi(i)}$.  Note $\Delta_{N^{\# \cS}}[Y_i^{(N)}]$ is a diagonal matrix of zeros and ones, and hence
\[
\norm{\Delta_{N^{\# \cS}}[Y_i^{(N)}]}_2 = \tr_{N^{\# \cS}}[\Delta_{N^{\# \cS}}[Y_i^{(N)}]^2]^{1/2} = \tr_{N^{\# \cS}}[\Delta_{N^{\# \cS}}[Y_i^{(N)}]]^{1/2} \to 0.
\]
Note each of the terms in \eqref{eq: product limit equation} is bounded in operator norm, and hence the errors from replacing $\Delta_{N^{\# \cS}}[Y_i^{(N)}]$ by $0$ will be bounded by a constant times $\norm{\Delta_{N^{\# \cS}}[Y_i^{(N)}]}_2$.  Therefore, we get
\[
  \lim_{N \to \infty} \norm{\Delta_{N^{\#\cS}}[Y_1^{(N)} \dots Y_k^{(N)} ]}_2 = 0 \text{ almost surely.}
\]
In particular,
\[
|\tr_{N^{\# \cS}}[Y_1^{(N)} \dots Y_k^{(N)}]| = |\tr_{N^{\#\cS}}[\Delta_{N^{\#\cS}} [Y_1^{(N)} \dots Y_k^{(N)}]]| \leq \norm{\Delta_{N^{\#\cS}}[Y_1^{(N)} \dots Y_k^{(N)} ]}_2 \to 0 \text{ almost surely.}
\]
Thus,
\[
\tr_{N^{\# \cS}}[(Z_{\chi(1),j(1,1)}^{(N)} \dots Z_{\chi(1),j(1,\ell(1))}^{(N)}) \dots (Z_{\chi(k),j(k,1)}^{(N)} \dots Z_{\chi(1),j(k,\ell(k))}^{(N)})] \to 0 \text{ almost surely.}
\]
This establishes \eqref{eq: soficity goal} in this case, and thus completes the induction proof.

Therefore, the matrices $Z_{c,j}^{(N)}$ are almost surely a permutation approximation for $\gp_{c \in \cG} \Gamma_c$ in the sense of Proposition \ref{prop: sofic}, which shows that $\gp_{c \in \cG} \Gamma_c$ is sofic.
\end{proof}

\appendix

\section{Notes}
\label{apndx: notation}

\subsection{An example with pictures}
\label{asubsec:tgraph ex}

Here we work through an extensive example to demonstrate the various graphs used in Subsections~\ref{subsec: combinatorial graph moments} and \ref{subsec: growth graphs}.

Let $\cC = \set{\cb, \cg, \crr}$ and $\cS = \set{1,2,3}$, and define $\sstrc$ by $1 \sstrc \cb$, $2 \sstrc \cb, \cg$, and $3 \sstrc \cg, \crr$.
Then
\begin{align*}
  \cC_1 &= \set{\cb}
  & \cC_2 &= \set{\cb, \cg}
  & \cC_3 &= \set{\cg, \crr} \\
  \cS_\cb &= \set{1,2}
  & \cS_\cg &= \set{2, 3}
  & \cS_\crr &= \set{3}
\end{align*}
In the visualization of \cite[\S3.2]{CC2021} this corresponds to operators living on strings in the following manner:
\[\begin{tikzpicture}[baseline, scale=.75, every node/.style={scale=.75}]

		\draw (0,3/2) node [left] {$1$} -- (4,3/2);
		\draw (0,2/2) node [left] {$2$} -- (4,2/2);
		\draw (0,1/2) node [left] {$3$} -- (4,1/2);

    \def\shades{{0, "blue", "green", "red"}}
    \def\chords{{"{0}", "{1, 2}", "{2,3}", "{3}"}}
    \def\labels{{"0", "B", "G", "R"}}
		\def\sequence{{0,1,2,3}}

		\foreach \x in {1, ..., 3} {
			\pgfmathparse{\sequence[\x]}
			\edef\ind{\pgfmathresult}
			\pgfmathparse{\chords[\ind]}
			\edef\chord{\pgfmathresult}
			\pgfmathparse{\shades[\ind]}
			\edef\myshade{\pgfmathresult}
			\foreach \y in \chord {
				\node [draw, shade, circle, ball color=\myshade] at (\x, 2-0.5*\y) {};
			}
      \pgfmathparse{\labels[\ind]}
      \edef\lbl{\pgfmathresult}
      \node [color=\myshade] (a) at (\x, 0) {$\lbl$};
		}
\end{tikzpicture}.\]

Define $T$ to be the following \tdg:
\[\begin{tikzpicture}
    \node[circle,fill,label=1] (1) at (0,0) {};
    \node[circle,fill,label=2] (2) at (2,0) {};
    \node[circle,fill,label=3] (3) at (4,0) {};
    \node[circle,fill,label=4] (4) at (2,2) {};
    \node[circle,fill,label=5] (5) at (4,2) {};
    \node[circle,fill,label=6] (6) at (6,2) {};

    \path[->,color=red] (1) edge node[auto,swap] {$X_1$} (2);
    \path[->,color=green,in=220,out=320] (2) edge node[auto,swap] {$X_2$} (3);
    \path[->,color=blue,in=140,out=40] (2) edge node[auto,swap,above] {$X_3$} (3);
    \path[->,color=blue] (3) edge node[auto,swap] {$X_4$} (4);
    \path[->,color=green,out=50,in=310] (3) edge node[auto,swap] {$X_5$} (5);
    \path[->,color=green] (4) edge node[auto,swap] {$X_6$} (1);
    \path[->,color=green] (4) edge node[auto,swap] {$X_7$} (5);
    \path[->,color=blue] (5) edge node[auto,swap] {$X_8$} (6);
\end{tikzpicture}\]
(Explicitly: we are taking $\chi(1) = {\color{red}R}$, $\chi(2) = \chi(5) = \chi(6) = \chi(7) = {\color{green}G}$, and $\chi(3) = \chi(4) = \chi(8) = {\color{blue}B}$.)

We then have the following, from Notation~\ref{not: rose}:
\[
  \rho_1 = \set{ \{1,2,3,4,5\}, \{6\} }, \qquad
  \rho_2 = \set{ \{1,2\}, \{3\}, \{4\}, \{5\}, \{6\} }, \qquad
  \rho_3 = \set{ \{1\}, \{2,3,4\}, \{5,6\} }. \qquad
\]

To keep things slightly simpler, we will take $\pi = \rho$.
In the language of Notation~\ref{not:omegapic}, we have
\[
  \pi_{\cb} = \set{ \{1,2\}, \{3\}, \{4\}, \{5\}, \{6\} }, \qquad
  \pi_{\cg} = \set{ \{1\}, \{2\}, \{3\}, \{4\}, \{5\}, \{6\} }, \qquad
  \pi_{\crr} = \set{ \{1\}, \{2,3,4\}, \{5,6\} }.
\]

This leads to the following three digraphs:
\[
  T_{\pi,\cb} =
  \begin{tikzpicture}[baseline,scale=.6]
    \node[circle,fill,label=left:{\scriptsize{\{1,2\}}}] (1) at (2,0) {};
    \node[circle,fill,label=3] (3) at (4,0) {};
    \node[circle,fill,label=4] (4) at (2,2) {};
    \node[circle,fill,label=5] (5) at (4,2) {};
    \node[circle,fill,label=6] (6) at (6,2) {};

    \path[->,color=blue,in=140,out=40] (1) edge node[auto,swap] {$X_3$} (3);
    \path[->,color=blue] (3) edge node[auto,swap] {$X_4$} (4);
    \path[->,color=blue] (5) edge node[auto,swap] {$X_8$} (6);
  \end{tikzpicture}
\qquad
  T_{\pi,\cg} =
  \begin{tikzpicture}[baseline,scale=.6]
    %\node[circle,fill,label= above right: $0$] (0) at (2,2) {};
    \node[circle,fill,label=1] (1) at (0,0) {};
    \node[circle,fill,label=2] (2) at (2,0) {};
    \node[circle,fill,label=3] (3) at (4,0) {};
    \node[circle,fill,label=4] (4) at (2,2) {};
    \node[circle,fill,label=5] (5) at (4,2) {};
    \node[circle,fill,label=6] (6) at (6,2) {};

    \path[->,color=green,in=220,out=320] (2) edge node[auto,swap] {$X_2$} (3);
    \path[->,color=green,out=50,in=310] (3) edge node[auto,swap] {$X_5$} (5);
    \path[->,color=green] (4) edge node[auto,swap] {$X_6$} (1);
    \path[->,color=green] (4) edge node[auto,swap] {$X_7$} (5);
\end{tikzpicture}
\qquad
  T_{\pi,\crr} =
  \begin{tikzpicture}[baseline,scale=.6]
    %\node[circle,fill,label= above right: $0$] (0) at (2,2) {};
    \node[circle,fill,label=1] (1) at (0,0) {};
    \node[circle,fill,label=right:{\scriptsize{\{2,3,4\}}}] (2) at (2,0) {};
      \node[circle,fill,label={\scriptsize{\{5,6\}}}] (5) at (4,2) {};

    \path[->,color=red] (1) edge node[auto,swap] {$X_1$} (2);
\end{tikzpicture}
\]
Next, the graphs introduced in Notation~\ref{not:gpis} are as follows:
\[
  T_{\pi,1} =
  \begin{tikzpicture}[baseline,scale=.6]
    \node[circle,fill,label=below right: {\scriptsize{\{1,2,3,4,5\}}}] (1) at (0,0) {};
    \node[circle,fill,label=6] (6) at (2,2) {};
    \path[->,color=blue,in=110,out=160,looseness=20] (1) edge node[auto,swap,above] {$X_3$} (1);
    \path[->,color=blue,in=250,out=200,looseness=20] (1) edge node[auto,swap] {$X_4$} (1);
    \path[->,color=blue] (1) edge node[auto,swap] {$X_8$} (6);
  \end{tikzpicture}
  \qquad
  T_{\pi,2} =
  \begin{tikzpicture}[baseline,scale=.6]
    %\node[circle,fill,label=1] (1) at (0,0) {};
    \node[circle,fill,label=left:{\scriptsize{\{1,2\}}}] (1) at (2,0) {};
    \node[circle,fill,label=3] (3) at (4,0) {};
    \node[circle,fill,label=4] (4) at (2,2) {};
    \node[circle,fill,label=5] (5) at (4,2) {};
    \node[circle,fill,label=6] (6) at (6,2) {};

    \path[->,color=green,in=220,out=320] (1) edge node[auto,swap] {$X_2$} (3);
    \path[->,color=blue,in=140,out=40] (1) edge node[auto,swap] {$X_3$} (3);
    \path[->,color=blue] (3) edge node[auto,swap,above right=-.1cm] {$X_4$} (4);
    \path[->,color=green,out=50,in=310] (3) edge node[auto,swap] {$X_5$} (5);
    \path[->,color=green] (4) edge node[auto,swap] {$X_6$} (1);
    \path[->,color=green] (4) edge node[auto,swap,above] {$X_7$} (5);
    \path[->,color=blue] (5) edge node[auto,swap,above] {$X_8$} (6);
  \end{tikzpicture}
  \qquad
  T_{\pi,3} =
  \begin{tikzpicture}[baseline,scale=.6]
    \node[circle,fill,label=1] (1) at (0,0) {};
    \node[circle,fill,label=right:{\scriptsize{\{2,3,4\}}}] (2) at (2,0) {};
    \node[circle,fill,label={\scriptsize{\{5,6\}}}] (5) at (4,2) {};

    \path[->,color=red,in=140,out=40] (1) edge node[auto,swap,above] {$X_1$} (2);
    \path[->,color=green,in=230,out=310,looseness=20] (2) edge node[auto,swap] {$X_2$} (2);
    \path[->,color=green,out=5,in=265] (2) edge node[auto,swap, right] {$X_5$} (5);
    \path[->,color=green,in=320,out=220] (2) edge node[auto,swap] {$X_6$} (1);
    \path[->,color=green,in=185,out=85] (2) edge node[auto,swap,above left] {$X_7$} (5);
  \end{tikzpicture}
\]

Finally, we can consider the graphs of colored components from Definition~\ref{def:gcc}:
\[
  \GCC(T,\pi,2) =
  \begin{tikzpicture}[baseline]
    \begin{scope}[yshift=-2cm,xshift=.5cm]
      \begin{scope}[scale=.3,transform shape]
        \node[circle,fill,label=below:{\scriptsize{\{1,2\}}}] (1) at (0,0) {};
        \node[circle,fill,label=3] (3) at (2,0) {};
        \node[circle,fill,label=4] (4) at (0,2) {};

        \path[->,color=blue,in=140,out=40] (1) edge node[auto,swap] {$X_3$} (3);
        \path[->,color=blue] (3) edge node[auto,swap] {$X_4$} (4);

        \node [draw,rectangle,text width=35mm,blue, minimum height=35mm, inner sep=0pt,above right,rounded corners,thick] (b1) at (-.75,-.75) {};
      \end{scope}
      \begin{scope}[scale=.3,transform shape,xshift=5cm,yshift=2cm]
        \node[circle,fill,label=5] (5) at (0,0) {};
        \node[circle,fill,label=6] (6) at (2,0) {};

        \path[->,color=blue] (5) edge node[auto,swap] {$X_8$} (6);

        \node [draw,rectangle,text width=35mm,blue, minimum height=15mm, inner sep=0pt,above right,rounded corners,thick] (b2) at (-.75,-.75) {};
      \end{scope}

      \begin{scope}[scale=.3,transform shape,xshift=10cm]
        \node[circle,fill,label=1] (1) at (0,0) {};
        \node[circle,fill,label=2] (2) at (2,0) {};
        \node[circle,fill,label=3] (3) at (4,0) {};
        \node[circle,fill,label=4] (4) at (2,2) {};
        \node[circle,fill,label=5] (5) at (4,2) {};

        \path[->,color=green,in=220,out=320] (2) edge node[auto,swap,above] {$X_2$} (3);
        \path[->,color=green,out=50,in=310] (3) edge node[auto,swap,left] {$X_5$} (5);
        \path[->,color=green] (4) edge node[auto,swap] {$X_6$} (1);
        \path[->,color=green] (4) edge node[auto,swap] {$X_7$} (5);

        \node [draw,green,rectangle,text width=55mm, minimum height=35mm, inner sep=0pt,above right,rounded corners,thick] (g1) at (-.75,-.75) {};
      \end{scope}

      \begin{scope}[scale=.3,transform shape,xshift=17cm,yshift=2cm]
        \node[circle,fill,label=6] (6) at (0,0) {};

        \node [green,draw,rectangle,text width=15mm, minimum height=15mm, inner sep=0pt,above right,rounded corners,thick] (g2) at (-.75,-.75) {};
      \end{scope}
    \end{scope}

    \node[circle,fill,label={\scriptsize{\{1,2\}}}] (1) at (0,2) {};
    \node[circle,fill,label=3] (3) at (1.5,2) {};
    \node[circle,fill,label=4] (4) at (3,2) {};
    \node[circle,fill,label=5] (5) at (4.5,2) {};
    \node[circle,fill,label=6] (6) at (6,2) {};

    \draw (b1) -- (1) node [pos=.75,left,blue] {\scriptsize{\{1,2\}}} ;
    \draw (b1) -- (3) node [pos=.75,right,blue] {3} ;
    \draw (b1) -- (4) node [pos=.75,right,blue] {4} ;
    \draw (b2) -- (5) node [pos=.75,left,blue] {5} ;
    \draw (b2) -- (6) node [pos=.75,left,blue] {6} ;

    \draw (g1.160) -- (1.305) node [pos=.25,left,green] {1} ;
    \draw (g1.140) -- (1.325) node [pos=.25,above,green] {2} ;
    \draw (g1) -- (3) node [pos=.5,right,green] {3} ;
    \draw (g1) -- (4) node [pos=.25,right,green] {4} ;
    \draw (g1) -- (5) node [pos=.25,right,green] {5} ;
    \draw (g2) -- (6) node [pos=.25,left,green] {6} ;
  \end{tikzpicture}
\]

\[
  \GCC(T,\pi,3) =
  \begin{tikzpicture}[baseline]
    \begin{scope}[yshift=-2cm,xshift=-.75cm]
      \begin{scope}[scale=.3,transform shape]
        \node[circle,fill,label=1] (1) at (0,0) {};
        \node[circle,fill,label=2] (2) at (2,0) {};
        \node[circle,fill,label=3] (3) at (4,0) {};
        \node[circle,fill,label=4] (4) at (2,2) {};
        \node[circle,fill,label=5] (5) at (4,2) {};

        \path[->,color=green,in=220,out=320] (2) edge node[auto,swap,above] {$X_2$} (3);
        \path[->,color=green,out=50,in=310] (3) edge node[auto,swap,left] {$X_5$} (5);
        \path[->,color=green] (4) edge node[auto,swap] {$X_6$} (1);
        \path[->,color=green] (4) edge node[auto,swap] {$X_7$} (5);

        \node [draw,green,rectangle,text width=55mm, minimum height=35mm, inner sep=0pt,above right,rounded corners,thick] (g1) at (-.75,-.75) {};
      \end{scope}

      \begin{scope}[scale=.3,transform shape,xshift=7cm,yshift=2cm]
        \node[circle,fill,label=6] (6) at (0,0) {};

        \node [green,draw,rectangle,text width=15mm, minimum height=15mm, inner sep=0pt,above right,rounded corners,thick] (g2) at (-.75,-.75) {};
      \end{scope}

      \begin{scope}[scale=.3,transform shape,xshift=10cm,yshift=2cm]
        \node[circle,fill,label=1] (1) at (0,0) {};
        \node[circle,fill,label=above:{\scriptsize{\{2,3,4\}}}] (2) at (2,0) {};

        \path[->,color=red] (1) edge node[auto,swap] {$X_1$} (2);

        \node [draw,red,rectangle,text width=35mm, minimum height=15mm, inner sep=0pt,above right,rounded corners,thick] (r1) at (-.75,-.75) {};
      \end{scope}

      \begin{scope}[scale=.3,transform shape,xshift=15cm,yshift=2cm]
        \node[circle,fill,label={\scriptsize{\{5,6\}}}] (5) at (0,0) {};

        \node [draw,red,rectangle,text width=15mm, minimum height=15mm, inner sep=0pt,above right,rounded corners,thick] (r2) at (-.75,-.75) {};
      \end{scope}
    \end{scope}

    \node[circle,fill,label=1] (1) at (0,2) {};
    \node[circle,fill,label={\scriptsize{\{2,3,4\}}}] (2) at (1.5,2) {};
    \node[circle,fill,label={\scriptsize{\{5,6\}}}] (5) at (3,2) {};

    \draw (g1.120) to[bend left=10] (1) node [pos=.25,left=.4cm,green] {1} ;
    \draw (g1.100) to[bend left=10] (2.200) node [pos=.25,left=-.2cm,green] {2} ;
    \draw (g1.80) -- (2.235) node [pos=.35,green,left=-.04cm] {3} ;
    \draw (g1.60) to[bend right=10] (2.270) node [pos=.25,right=.5cm,green] {4} ;
    \draw (g1) -- (5) node [pos=.25,right,green] {5} ;
    \draw (g2) -- (5) node [pos=.15,left,green] {6} ;

    \draw (r1) -- (1) node [pos=.5,red,above right=-.2cm] {1};
    \draw (r1) -- (2) node [pos=.25,right,red] {\scriptsize{\{2,3,4\}}};
    \draw (r2) -- (5) node [pos=.5,red,above right] {\scriptsize{\{5,6\}}};
  \end{tikzpicture}
\]

Notice how the edges adjacent to $\set{1,2}$ behave in $\GCC(T,\pi,2)$.
Since $1, 2 \in V(T)$ are identified in $T_{\pi, \cb}$ there is only one edge to the blue component; since they are not identified in $T_{\pi, \cg}$ there are two edges to the green component.

Here, neither $\GCC(T,\pi,2)$ nor $\GCC(T, \pi, 3)$ is a tree; although we didn't draw it, neither is $\GCC(T,\pi,1)$.
Contrarily, it can be checked that if $\sigma_1 = \set{\set{1,2,3,4,5}, \set6}$, $\sigma_2 = \set{\set{1,2,3,4}, \set{5},\set{6}}$, and $\sigma_3 = \set{\set{1,2,3,4}, \set{5,6}}$ then $\GCC(T,\sigma,1)$, $\GCC(T, \sigma,2)$, and $\GCC(T,\sigma,3)$ are all trees.

Finally, let us consider an example application of Lemma~\ref{lem:inducedgccpath}.
Consider the walk ${\color{red} X_1}, {\color{blue}X_3}, {\color{blue}X_4}, {\color{green}X_7}, {\color{blue}X_8}$ in $T$; we will examine the walk it induces in both $\GCC(T,\pi,2)$ and $\GCC(T,\pi,3)$.

\[\begin{tikzpicture}
    \node[circle,fill,label=1] (1) at (0,0) {};
    \node[circle,fill,label=2] (2) at (2,0) {};
    \node[circle,fill,label=3] (3) at (4,0) {};
    \node[circle,fill,label=4] (4) at (2,2) {};
    \node[circle,fill,label=5] (5) at (4,2) {};
    \node[circle,fill,label=6] (6) at (6,2) {};

    \path[->,color=red,very thick] (1) edge node[auto,swap] {$X_1$} (2);
    \path[->,color=green,dashed,in=220,out=320] (2) edge node[auto,swap] {$X_2$} (3);
    \path[->,color=blue,very thick,in=140,out=40] (2) edge node[auto,swap,above] {$X_3$} (3);
    \path[->,color=blue,very thick] (3) edge node[auto,swap] {$X_4$} (4);
    \path[->,color=green,dashed,out=50,in=310] (3) edge node[auto,swap] {$X_5$} (5);
    \path[->,color=green,dashed] (4) edge node[auto,swap] {$X_6$} (1);
    \path[->,color=green,very thick] (4) edge node[auto,swap] {$X_7$} (5);
    \path[->,color=blue,very thick] (5) edge node[auto,swap] {$X_8$} (6);
\end{tikzpicture}\]

In $\GCC(T,\pi,2)$ the path begins at $\set{1,2}$, and proceeds to $3$ before $4$.
The edge corresponding to ${\color{blue}3}$ is traversed twice, once in each direction.
\[\begin{tikzpicture}[baseline]
    \begin{scope}[yshift=-2cm,xshift=.5cm]
      \begin{scope}[scale=.3,transform shape]
        \node[circle,fill,label=below:{\scriptsize{\{1,2\}}}] (1) at (0,0) {};
        \node[circle,fill,label=3] (3) at (2,0) {};
        \node[circle,fill,label=4] (4) at (0,2) {};

        \path[->,color=blue,in=140,out=40] (1) edge node[auto,swap] {$X_3$} (3);
        \path[->,color=blue] (3) edge node[auto,swap] {$X_4$} (4);

        \node [draw,rectangle,text width=35mm,blue, minimum height=35mm, inner sep=0pt,above right,rounded corners,thick] (b1) at (-.75,-.75) {};
      \end{scope}
      \begin{scope}[scale=.3,transform shape,xshift=5cm,yshift=2cm]
        \node[circle,fill,label=5] (5) at (0,0) {};
        \node[circle,fill,label=6] (6) at (2,0) {};

        \path[->,color=blue] (5) edge node[auto,swap] {$X_8$} (6);

        \node [draw,rectangle,text width=35mm,blue, minimum height=15mm, inner sep=0pt,above right,rounded corners,thick] (b2) at (-.75,-.75) {};
      \end{scope}

      \begin{scope}[scale=.3,transform shape,xshift=10cm]
        \node[circle,fill,label=1] (1) at (0,0) {};
        \node[circle,fill,label=2] (2) at (2,0) {};
        \node[circle,fill,label=3] (3) at (4,0) {};
        \node[circle,fill,label=4] (4) at (2,2) {};
        \node[circle,fill,label=5] (5) at (4,2) {};

        \path[->,color=green,in=220,out=320] (2) edge node[auto,swap,above] {$X_2$} (3);
        \path[->,color=green,out=50,in=310] (3) edge node[auto,swap,left] {$X_5$} (5);
        \path[->,color=green] (4) edge node[auto,swap] {$X_6$} (1);
        \path[->,color=green] (4) edge node[auto,swap] {$X_7$} (5);

        \node [draw,green,rectangle,text width=55mm, minimum height=35mm, inner sep=0pt,above right,rounded corners,thick] (g1) at (-.75,-.75) {};
      \end{scope}

      \begin{scope}[scale=.3,transform shape,xshift=17cm,yshift=2cm]
        \node[circle,fill,label=6] (6) at (0,0) {};

        \node [green,draw,rectangle,text width=15mm, minimum height=15mm, inner sep=0pt,above right,rounded corners,thick] (g2) at (-.75,-.75) {};
      \end{scope}
    \end{scope}

    \node[circle,fill,label={\scriptsize{\{1,2\}}}] (1) at (0,2) {};
    \node[circle,fill,label=3] (3) at (1.5,2) {};
    \node[circle,fill,label=4] (4) at (3,2) {};
    \node[circle,fill,label=5] (5) at (4.5,2) {};
    \node[circle,fill,label=6] (6) at (6,2) {};

    \draw [dashed] (b1) -- (1) node [pos=.75,left,blue] {\scriptsize{\{1,2\}}} ;
    \path [dashed] (b1) -- (3) node [pos=.75,right,blue] {3} ;
    \draw [dashed] (b1) -- (4) node [pos=.75,right,blue] {4} ;
    \draw [dashed] (b2) -- (5) node [pos=.75,left,blue] {5} ;
    \draw [dashed] (b2) -- (6) node [pos=.75,left,blue] {6} ;

    \draw [dashed] (g1.160) -- (1.305) node [pos=.25,left,green] {1} ;
    \draw [dashed] (g1.140) -- (1.325) node [pos=.25,above,green] {2} ;
    \draw [dashed] (g1) -- (3) node [pos=.5,right,green] {3} ;
    \draw [dashed] (g1) -- (4) node [pos=.25,right,green] {4} ;
    \draw [dashed] (g1) -- (5) node [pos=.25,right,green] {5} ;
    \draw [dashed] (g2) -- (6) node [pos=.25,left,green] {6} ;

    \begin{scope}[very thick,decoration={markings,mark=at position 0.65 with {\arrow{>}}}]
      \draw[postaction={decorate}] (1) -- (b1);
      %\draw[postaction={decorate}] (b1) to[bend left=3] (3);
      %\draw[postaction={decorate}] (3) to[bend left=3] (b1);
      \draw[postaction={decorate}] (b1) -- (3);
      \draw[postaction={decorate}] (3) -- (b1);
      \draw[postaction={decorate}] (b1) -- (4);
      \draw[postaction={decorate}] (4) -- (g1);
      \draw[postaction={decorate}] (g1) -- (5);
      \draw[postaction={decorate}] (5) -- (b2);
      \draw[postaction={decorate}] (b2) -- (6);
    \end{scope}
  \end{tikzpicture}
\]

Although the path in $T$ passes from $2$ to $3$ to $4$, it does so along blue edges which are not reflected in $\GCC(T,\pi,3)$, even though there is also (for example) a green path from $2$ to $3$.
\[\begin{tikzpicture}[baseline]
    \begin{scope}[yshift=-2cm,xshift=-.75cm]
      \begin{scope}[scale=.3,transform shape]
        \node[circle,fill,label=1] (1) at (0,0) {};
        \node[circle,fill,label=2] (2) at (2,0) {};
        \node[circle,fill,label=3] (3) at (4,0) {};
        \node[circle,fill,label=4] (4) at (2,2) {};
        \node[circle,fill,label=5] (5) at (4,2) {};

        \path[->,color=green,in=220,out=320] (2) edge node[auto,swap,above] {$X_2$} (3);
        \path[->,color=green,out=50,in=310] (3) edge node[auto,swap,left] {$X_5$} (5);
        \path[->,color=green] (4) edge node[auto,swap] {$X_6$} (1);
        \path[->,color=green] (4) edge node[auto,swap] {$X_7$} (5);

        \node [draw,green,rectangle,text width=55mm, minimum height=35mm, inner sep=0pt,above right,rounded corners,thick] (g1) at (-.75,-.75) {};
      \end{scope}

      \begin{scope}[scale=.3,transform shape,xshift=7cm,yshift=2cm]
        \node[circle,fill,label=6] (6) at (0,0) {};

        \node [green,draw,rectangle,text width=15mm, minimum height=15mm, inner sep=0pt,above right,rounded corners,thick] (g2) at (-.75,-.75) {};
      \end{scope}

      \begin{scope}[scale=.3,transform shape,xshift=10cm,yshift=2cm]
        \node[circle,fill,label=1] (1) at (0,0) {};
        \node[circle,fill,label=above:{\scriptsize{\{2,3,4\}}}] (2) at (2,0) {};

        \path[->,color=red] (1) edge node[auto,swap] {$X_1$} (2);

        \node [draw,red,rectangle,text width=35mm, minimum height=15mm, inner sep=0pt,above right,rounded corners,thick] (r1) at (-.75,-.75) {};
      \end{scope}

      \begin{scope}[scale=.3,transform shape,xshift=15cm,yshift=2cm]
        \node[circle,fill,label={\scriptsize{\{5,6\}}}] (5) at (0,0) {};

        \node [draw,red,rectangle,text width=15mm, minimum height=15mm, inner sep=0pt,above right,rounded corners,thick] (r2) at (-.75,-.75) {};
      \end{scope}
    \end{scope}

    \node[circle,fill,label=1] (1) at (0,2) {};
    \node[circle,fill,label={\scriptsize{\{2,3,4\}}}] (2) at (1.5,2) {};
    \node[circle,fill,label={\scriptsize{\{5,6\}}}] (5) at (3,2) {};

    \draw [dashed] (g1.120) to[bend left=10] (1) node [pos=.25,left=.4cm,green] {1} ;
    \draw [dashed] (g1.100) to[bend left=10] (2.200) node [pos=.25,left=-.2cm,green] {2} ;
    \draw [dashed] (g1.80) -- (2.235) node [pos=.35,green,left=-.04cm] {3} ;
    \draw [dashed] (g1.60) to[bend right=10] (2.270) node [pos=.25,right=.5cm,green] {4} ;
    \draw [dashed] (g1) -- (5) node [pos=.25,right,green] {5} ;
    \draw [dashed] (g2) -- (5) node [pos=.15,left,green] {6} ;

    \draw [dashed] (r1) -- (1) node [pos=.5,red,above right=-.2cm] {1};
    \draw [dashed] (r1) -- (2) node [pos=.25,right,red] {\scriptsize{\{2,3,4\}}};
    \draw [dashed] (r2) -- (5) node [pos=.5,red,above right] {\scriptsize{\{5,6\}}};

    \begin{scope}[very thick,decoration={markings,mark=at position 0.8 with {\arrow{>}}}]
      \draw[postaction={decorate}] (1) -- (r1);
      \draw[postaction={decorate}] (r1) -- (2);
      \draw[postaction={decorate}] (2.270) to[bend left=10] (g1.60);
      \draw[postaction={decorate}] (g1) -- (5);
    \end{scope}
  \end{tikzpicture}
\]

\subsection{Summary of Notation}

Here we collect a brief summary of the notation used in Section~\ref{sec:permutationtraffic}.
It is meant to serve as a reminder; the reader should consult the full definitions above.

\begin{center}
\begin{tabular}{|l|p{0.6\textwidth}@{\hspace{2em}}r|}
\hline
  $\cC$ & set of colors labeling the algebras in a graph product &\\
  $\cG$ & $(\cC, \cE)$, a (simple undirected) graph indexing a graph product &\\
  $\cS$ & set of strings labeling tensor factors in $\bigotimes_{\cS} M_N(\bC)$ &\\
  $s\sstrc c$ & the $c$-colored algebra has access to the $s$-labelled tensor factor &\\
   $\cS_c$ & $\set{ s \in \cS \mid s \sstrc c }$ &\\
  $\cC_s$ & $\set{ c \in \cC \mid s \sstrc c }$ &\\
  $(\Sigma_c^{(N)})_{c\in\cC}$ & independent permutation matrices with $\Sigma_c^{(N)}$ uniform among permutations in $\bigotimes_{\cS_c} M_{N}(\bC)$ & (cf. Theorem~\ref{thm: permutation model}) \\
  $\uu{X}$ & $ (\Sigma_{c}^{(N)})^t X \Sigma_c^{(N)} \otimes I_N^{\otimes\cS\setminus \cS_c}$ & (cf. Theorem~\ref{thm: permutation model})\\
  $\tau_N(T)$ & $\frac{1}{N^{\# \Comp(T)}} \sum_{i: V(T) \to [N]} \prod_{e \in E(T)} (X_e)_{i(e_+),i(e_-)}$ & (cf. Definition~\ref{defn:testtrace}) \\
  $\tau_N^\circ(T)$ & $\frac{1}{N^{\# \Comp(T)}} \sum_{i: V(T) \hookrightarrow [N]} \prod_{e \in E(T)} (X_e)_{i(e_+),i(e_-)}$ & (cf. Definition~\ref{defn:testtrace}) \\
  $\mathring{T}$ & $T$ with $\Lambda$-labelled self-loops & (cf. Notation~\ref{not: loop graph}) \\
  $T|_C$ & $T$ with edges not in $\chi^{-1}(C)$ removed & (cf. Notation~\ref{not: rose})\\
  $T|_c$ & $T_{\set{c}}$ & (cf. Notation~\ref{not: rose})\\
  $\rho_s$ & partition of $V(T)$ into connected components of $T|_{\cC\setminus \cC_s}$ & (cf. Notation~\ref{not: rose}) \\
  $\gamma_N(T, \pi)$ & $\displaystyle\frac{1}{N^{\# \mathcal{S}}}
    \sum_{\substack{i: V(T) \to [N]^{\mathcal{S}} \\ \forall s \in \mathcal{S}, \ker(i_s) = \pi_s }}
    \prod_{v \in V(T)} (\Lambda_v)_{i(v),i(v)}
    \prod_{e \in E(T)} (\uu{X_e})_{i(e_+),i(e_-)}$& (cf. (\ref{eq:gamma})) \\
    $\lambda_N(T, \pi)$ & $\displaystyle\frac{1}{N^{\sum_{s \in \cS} \# \pi_s}} \sum_{\substack{i: V(T) \to [N]^{\mathcal{S}} \\ \forall s \in \mathcal{S}, \ker(i_s) = \pi_s }}
  \prod_{v \in V(T)} (\Lambda_v)_{i(v),i(v)}$ & (cf. Notation~\ref{not:lambdaN})\\
  $\omegapic$ & $\bigwedge_{s \in \cS_c} \pi_s$ & (cf. Notation~\ref{not:omegapic})\\
  $\Tpic$ & $T|_{c}/\pi_c$, labelled by $\uu{X}$ & (cf.~Notation~\ref{not:omegapic}) \\
  $\gps$ & $T|_{\cC_s} / \pi_s$, labelled by $\uu{X}$ & (cf. Notation~\ref{not:gpis}) \\
  $\hsc$ & Quotient and restriction map $\Tpic \to T / \pi_s \to \gps$ & (cf. Notation~\ref{not:hsc}) \\
    $\GCC(T, \pi, s)$ & bipartite graph on $\bigsqcup_{c \in \cC_s} \Comp(\Tpic)$ and $V(\gps)$ & (cf. Definition~\ref{def:gcc}) \\
    \hline
\end{tabular}
\end{center}

\newcommand{\etalchar}[1]{$^{#1}$}
\providecommand{\bysame}{\leavevmode\hbox to3em{\hrulefill}\thinspace}
\providecommand{\MR}{\relax\ifhmode\unskip\space\fi MR }
% \MRhref is called by the amsart/book/proc definition of \MR.
\providecommand{\MRhref}[2]{%
  \href{http://www.ams.org/mathscinet-getitem?mr=#1}{#2}
}
\providecommand{\href}[2]{#2}

%
%\bibliographystyle{amsalpha}
%\bibliography{graphproducts}

\end{document}